\documentclass[10pt]{article}

\usepackage{amsmath}
\usepackage{graphicx}
\usepackage{enumerate}
\usepackage{natbib}
\usepackage{url} 
\RequirePackage[colorlinks,citecolor=blue,urlcolor=blue]{hyperref}
\usepackage{latexsym,amssymb,amsopn,amscd,
amsbsy,marginnote,cprotect}
\usepackage{mathrsfs}
\usepackage{subfig}
\usepackage[usenames,dvipsnames]{color}
\usepackage[normalem]{ulem}
\usepackage{float}
\usepackage{epsfig}
\usepackage{relsize}
\usepackage{xcolor}
\usepackage{bm}    
\usepackage{bbm}                   
\usepackage[english]{inputenc}  
\usepackage{amsthm} 

\captionsetup[subfigure]{labelformat=empty}


\newcommand{\N}{\ensuremath{\mathbb{N}}}
\newcommand{\R}{\ensuremath{\mathbb{R}}}

\newcommand{\Prob}{\ensuremath{\mathbb{P}}}
\newcommand{\Esp}{\ensuremath{\mathbb{E}}}
\newcommand{\A}{\ensuremath{\forall}}

\newcommand{\B}{\mathscr{B}}
\newcommand{\F}{\cl{F}}

\newcommand{\Ind}{\mathlarger{\mathbf{1}}}
\newcommand{\captionstring}[1]{\noexpand\noexpand\noexpand\string\string#1}

\newcommand{\cl}[1]{\mathcal{#1}}

\newcommand{\msf}[1]{\mathsf{#1}}
\newcommand{\mbf}[1]{\mathbf{#1}}

\newcommand{\mrm}[1]{\mathrm{#1}}

\def\mi{\,\middle|\,}
\def\l{\left}
\def\r{\right}

\def\deq{\stackrel{d}{=}}

\def\dto{\stackrel{d}{\to}}

\def\wto{\stackrel{w}{\to}}
\def\dwto{\stackrel{dw}{\to}}
\def\L2to{\stackrel{\cl{L}_2}{\to}}

\def\Lpwto{\stackrel{\cl{L}_p w}{\to}}
\def\Pwto{\stackrel{\Prob w}{\to}}
\def\iid{\stackrel{\mbox{\scriptsize{iid}}}{\sim}}

\def\d{{\mbf{d}}}
\def\k{{\mbf{k}}}
\def\u{{\mbf{u}}}
\def\v{{\mbf{v}}}
\def\w{{\mbf{w}}}
\def\x{{\mbf{x}}}
\def\y{{\mbf{y}}}
\def\z{{\mbf{z}}}

\def\n{{\mbf{n}}}
\def\m{{\mbf{m}}}
\def\k{{\mbf{k}}}
\def\p{{\mbf{p}}}

\def\W{{\mbf{W}}}
\def\Y{{\mbf{Y}}}

\def\K{{\mbf{K}}}
\def\U{{\mbf{U}}}
\def\V{{\mbf{V}}}
\def\X{{\mbf{X}}}

\def\D{{\mbf{D}}}

\def\bmu{{\bm{\mu}}}

\def\bnu{{\bm{\nu}}}

\def\bxi{{\bm{\xi}}}

\def\bXi{{\bm{\Xi}}}
\def\bPhi{{\bm{\Phi}}}

\def\bPi{{\bm{\Pi}}}
\def\bgamma{{\bm{\gamma}}}
\def\bbp{\mathbbm{p}}

\def\tw{{\mbf{\tilde{w}}}}

\def\tW{{\mbf{\tilde{W}}}}

\def\mathL{\cl{L}}

\def\Be{{\msf{Be}}}
\def\Un{{\msf{Unif}}}

\def\SB{{\msf{SB}}}

\def\Ga{{\msf{Ga}}}

\def\Var{{\msf{Var}}}
\def\Cov{{\msf{Cov}}}
\def\Corr{{\msf{Corr}}}
\def\2F1{{{}_2{F}_1}}

\theoremstyle{plain}
\newtheorem{theo}{Theorem}[section]

\newtheorem{lem}[theo]{Lemma}
\newtheorem{cor}[theo]{Corollary}

\theoremstyle{remark}
\newtheorem{rem}[theo]{Remark}

\addtolength{\hoffset}{-28pt} \addtolength{\textwidth}{70pt}
\addtolength{\textheight}{20pt}

\sloppy

\title{\bf Stick-breaking processes with exchangeable length variables}

\author{
  Mar\'ia F. Gil--Leyva\\
  IIMAS, Universidad Nacional Aut\'onoma de M\'exico\\
  CDMX, M\'exico\\
  \texttt{marifer@sigma.iimas.unam.mx} \\
  \and
  Rams\'es H. Mena\\
  IIMAS, Universidad Nacional Aut\'onoma de M\'exico\\
  CDMX, M\'exico\\
  \texttt{ramses@sigma.iimas.unam.mx} \\
}

\date{}

\def\spacingset#1{\renewcommand{\baselinestretch}%
{#1}\small\normalsize} \spacingset{1}

\begin{document}
\maketitle

\begin{abstract}
Our object of study is the general class of stick-breaking processes with exchangeable length variables. These generalize well-known Bayesian non-parametric priors in an unexplored direction. We give conditions  to assure the respective species sampling process is proper and the corresponding prior has full support. {For a rich sub-class we explain how, by tuning a single $[0,1]$-valued parameter, the stochastic ordering of the weights can be modulated, and Dirichlet and Geometric priors can be recovered.} A general formula for the distribution of the latent allocation variables is derived and an MCMC algorithm is proposed for density estimation purposes. 
\end{abstract}

\noindent%
\textbf{{\it Keywords:}} Dirichlet process, Exchangeable sequence, Geometric process, Species sampling process, Stick-breaking.
\vfill

\newpage
\spacingset{1.1}

\section{Introduction}
Bayesian non-parametric priors have gained interest mainly due to their great flexibility to adjust to complex data sets, while remaining mathematically tractable. The Dirichlet process \citep{F73} stands as the canonical and most popular non-parametric prior in the literature. {This random probability measure enjoys the property of having exchangeable increments with respect to a suitable measure, and when defined over a Polish space, it has full support with respect to the weak topology}. These two  characteristics influence greatly the fact that the Dirichlet process is such a pliable model. 

Searching for competitive alternatives to the canonical model, different constructions of random probability measures {over Polish spaces} have been developed. Some of the most notable are through the normalization of homogeneous completely random measures \citep{K75,Regazzini03,JLP09,HHMW10}, through  the prediction rule of exchangeable partitions \citep{BM73,P06}, by means of the stick-breaking decomposition of a sequence of weights \citep{S94,IJ01,FLP12}, and most recently by virtue of latent random subsets of the natural numbers \citep{W07,FMW10,DMMP20}. All of these, as well as other popular models, fall into the class of species sampling processes \citep{P06,JangLeeLee2010}, which are random probability measures whose atomic decomposition specializes to the form,
\begin{equation}\label{eq:ssp}
\bmu = \sum_{j \geq 1}\w_j\delta_{\bxi_j} + \l(1-\sum_{j \geq 1}\w_j\r) \mu_0,
\end{equation}
for some collection of non-negative random variables $\W = (\w_j)_{j \geq 1}$ satisfying $\sum_{j \geq 1} \w_j \leq 1$ almost surely, independently of the atoms, $(\bxi_j)_{j \geq 1}$,  that are independent and identically distributed (i.i.d.) from the diffuse probability measure $\mu_0$, called base measure of $\bmu$. Indeed, $\bmu$ as in \eqref{eq:ssp} has $\mu_0$-exchangeable increments \citep{K17}, and under some conditions on the weights, $(\w_j)_{j \geq 1}$, as characterized by \cite{BO14}, the corresponding prior has full support with respect to the weak topology.

In contrast to other constructions, the stick-breaking decomposition is exhaustive in the class of species sampling processes, in the sense that the weights of every random probability measure in this class have a stick-breaking decomposition. For completeness, a proof of this {last statement}  can be found in Theorem \ref{theo:sb} in the appendix.  Namely, we can decompose
\begin{equation}\label{eq:sb}
\w_1 = \v_1, \quad \w_j = \v_j\prod_{i=1}^{j-1}(1-\v_i), \quad j \geq 2,
\end{equation}
for some {sequence} $\V = (\v_i)_{i \geq 1}$, with elements taking values in $[0,1]$. {Hereinafter we write $(\w_j)_{j \geq 1} = \SB[(\v_i)_{i \geq 1}]$ (or $\W = \SB[\V]$) whenever \eqref{eq:sb} holds, and refer to the elements of $\V$ as length variables}. Most efforts have concentrated in the case where these are mutually independent \citep{S94,P96b,IJ01,RD11,RQ15}, while there are only a handful of examples of stick-breaking process with explicitly dependent length variables \citep{FMW10,FLP12,Favaro2016,GMN20}. So far, the dependent case has remained somehow elusive due to the mathematical hurdles to overcome. Our proposal here represents, to some extent, a first general treatment of stick-breaking processes with dependent length variables. Explicitly, we will be focusing on the class with exchangeable length variables. Such stick-breaking processes not only constitute a rich class of priors that maintain mathematical tractability, but also delve into unexplored territory that unifies current theory of stick-breaking processes with independent and dependent length variables. 

The outline of the paper is as follows. Section \ref{sec:exch_ssp} presents an overview of exchangeable sequences driven by a species sampling processes. In Section~\ref{sec:SBE} we analyze the  general case of stick-breaking processes with exchangeable length variables. Section \ref{sec:SBESSP} addresses the case where the length variables themselves are driven by another species sampling process. Even this subclass is substantially wide as, in particular, it generalizes Dirichlet and Geometric processes \citep{FMW10}. An important result characterizing the ordering of the respective weights is given. For illustration purposes, in Section \ref{sec:SBEex} we consider the case where the length variables are driven by a Dirichlet process and other interesting species sampling processes such as the Pitman-Yor process. Finally, in Section \ref{sec:Illust}, we implement and evaluate our models to estimate the density of univariate and bivariate simulated data. The proofs of main results as well as the MCMC algorithm are deferred to the appendix.

\section{Preliminaries}\label{sec:exch_ssp}

One of the most influential theoretical results for Bayesian statistics is the representation theorem for exchangeable sequences, first proved by \cite{deFinetti31} and {later} generalized  by \cite{HS55}. This result states that a sequence, $\X = (\x_i)_{i \geq 1}$, whose elements take values in a Polish space, $S$, with Borel $\sigma$-algebra, $\B_S$, is exchangeable if and only if there exist a random probability measure, $\bmu$, over $(S,\B_S)$, such that for every $n \geq 1$ and $B_1,\ldots,B_n \in \B_S$, $\Prob[\x_1\in B_1,\ldots,\x_n \in B_n\mid \bmu] = \prod_{i=1}^{n}\bmu(B_i).$ Hence, elements in $\X$ are conditionally i.i.d. given $\bmu$, denoted by $\{\x_i\mid \bmu \iid \bmu ; \, i \geq 1\}$. The random measure $\bmu$ is called the directing random measure of $\X$, it is unique almost surely and given by the almost sure limit of the empirical distributions $\bmu = \lim_{n \to \infty} n^{-1}\sum_{i=1}^n \delta_{\x_i}$. 

A random variable that encloses important information about an exchangeable sequence is the random partition of $\{1,\ldots,n\}$, here denoted by $\bPi(\x_{1:n})$, generated by the random equivalence relation $i \bm{\sim} j$ if and only if $\x_i = \x_j$. Using the terminology of \cite{P95}, $\bPi(\x_{1:n})$ is exchangeable, for every $n \geq 1$, in the sense that for any partition $A = \{A_1,\ldots,A_k\}$ of $\{1,\ldots,n\}$, $\Prob[\bPi(\x_{1:n}) = A] = \pi(|A_1|,\ldots,|A_k|)$, for some symmetric function, $\pi:\bigcup_{k \in \N}\N^{k} \to [0,1]$, where $|A_i|$ stands for the cardinality of $A_i$. The function $\pi$ is called exchangeable partition probability function (EPPF). Another key aspect is that the collection $(\bPi(\x_{1:n}))_{n \geq 1}$ is consistent \citep{P95,P06} meaning that the restriction of $\bPi(\x_{1:n})$ to $\{1,\ldots,m\}$ equals $\bPi(\x_{1:m})$ almost surely, for every $n > m$. This translates to the well known addition rule of the EPPF
\begin{equation}\label{eq:add_rule}
\pi(n_1,\ldots,n_k) = \pi(n_1,\ldots,n_k,1) + \sum_{j=1}^k \pi(n_1,\ldots,n_{j-1},n_j+1,n_{j+1},\ldots,n_k).
\end{equation}
In fact, every symmetric function $\pi:\bigcup_{k \in \N}\N^{k} \to [0,1]$, with $\pi(1) = 1$ and that satisfies \eqref{eq:add_rule}, defines the EPPF of a consistent family of exchangeable partitions \citep[][]{P95}.

In particular, if the directing random measure, $\bmu$, is a species sampling process, the base measure, $\mu_0$, together with the EPPF, $\pi$, characterize completely the {laws} of the $\bmu$ and $(\x_i)_{i \geq 1}$. To be precise $\x_1 \sim \mu_0 = \Esp[\bmu]$, and for every $n \geq 1$, the prediction rule is
\begin{equation}\label{eq:pred_rule}
\Prob[\x_{n+1} \in \cdot\mid \x_1,\ldots,\x_{n}] = \sum_{j=1}^{\K_n}\frac{\pi\l(\n^{(j)}\r)}{\pi(\n)}\delta_{\x^*_j} + \frac{\pi\l(\n^{(\K_n+1)}\r)}{\pi(\n)}\mu_0,
\end{equation}
where $\x^*_1,\ldots,\x^{*}_{\K_n}$ are the $\K_n$ distinct values in $\{\x_1,\ldots,\x_n\}$, $\n_{j} = |\{i\leq n:\x_i = \x^{*}_j\}|$, for $j \leq \K_n$, $\n = (\n_1,\ldots\n_{\K_n})$,  $\n^{(j)} = (\n_1,\ldots \n_{j-1},\n_j+1,\n_{j+1}, \ldots,\n_{\K_n})$ and  $\n^{(\K_n+1)} = (\n_1,\ldots,\n_{\K_n},1)$. Equivalently, for every $n \geq 1$ and every measurable function $f:S^{n} \to \R$,
\begin{align}
&\Esp\l[f(\x_1,\ldots,\x_n)\r]\label{eq:means_samples}\\
& = \sum_{\{A_1,\ldots,A_k\}}\l\{\int f(x_{l_1},\ldots,x_{l_n})\prod_{j=1}^{k}\prod_{r \in A_j}\Ind_{\{l_r = j\}}\,\mu_0(dx_1)\ldots\mu_0(dx_k)\r\}\pi(|A_1|,\ldots,|A_k|),\nonumber
\end{align}
whenever the integrals in the right side exist, and where the sum ranges over all partitions of $\{1,\ldots,n\}$. Equality \eqref{eq:means_samples} is a generalization of a result by \cite{Y84},  and its proof can be found in Theorem \ref{theo:means_samples} in the Appendix \ref{sec:exch_ssp_supp}. A very important quantity of species  sampling process is the tie probability  of $\bmu$,
\[
\rho = \pi(2) = \Prob[\x_1 = \x_2] =  \Esp\l[\Prob[\x_1 = \x_2\mid \bmu]\r] = \sum_{j \geq 1}\Esp\l[\l(\w_j\r)^2\r].
\]
For the case $n = 2$, and using the exchangeability of the sequence, \eqref{eq:means_samples} simplifies to
\[
\Esp[f(\x_i,\x_j)] = \Esp\l[f(\x_1,\x_2)\r] = \rho \int f(x,x) \mu_0(dx) +(1-\rho)\int f(x_1,x_2) \mu_0(dx_1)\mu_0(dx_2),
\]
for every $i \neq j$, whenever $\int f(x,x) \mu_0(dx)$ and $\int f(x_1,x_2) \mu_0(dx_1)\mu_0(dx_2)$ exist. Another quantity that can be written in terms of the tie probability, is the conditional probability,
\[
\Prob[\x_j \in \cdot\mid\x_i] = \frac{\pi(2)}{\pi(1)}\delta_{\x_i} + \frac{\pi(1,1)}{\pi(1)}\mu_0 = \rho\,\delta_{\x_i} + (1-\rho)\mu_0,
\]
for every $i \neq j$. The last equation follows easily from \eqref{eq:pred_rule}, using the fact that $(\x_i,\x_j)$ is equal in distribution to $(\x_1,\x_2)$. Note that when $\rho \approx 0$, $\Prob[\x_j \in \cdot\mid\x_i] \approx \mu_0$, in the opposite case when $\rho \approx 1$, $\Prob[\x_j \in \cdot\mid\x_i] \approx \delta_{\x_i}$. Indeed, as the tie probability approaches zero, $(\x_i)_{i \geq 1}$ converges in distribution to an i.i.d. sequence, and as $\rho \to 1$, $(\x_i)_{i \geq 1}$ converges in distribution to a sequence of identical random variables. As we will see this assertion is essential to prove important convergence properties of stick-breaking process with exchangeable length variables. An account of exchangeable sequences driven by species sampling process is provided in the Appendix \ref{sec:exch_ssp_supp} along with the proof of the aforementioned properties.

\section{Stick-breaking processes with exchangeable length variables}\label{sec:SBE}

Hereinafter we focus on species sampling processes, $\bmu$, as in \eqref{eq:ssp}, defined over a measurable Polish space, $(S,\B_S)$, with collection of weights $\W = \SB[\V]$, where $\V = (\v_i)_{i \geq 1}$ is an exchangeable sequence. In other words, we analyze species sampling process whose weights' distribution remains invariant under permutations of the length variables. Any random probability measure  of this kind will be referred as an exchangeable stick-breaking process (ESB). {The first examples of species sampling processes in this class are well-known models in the literature. In fact, if the length variables are identical, $\v_i = \v$, for $i \geq 1$, and some $\v \sim \nu_0$, where $\nu_0$ is a diffuse probability measure over $\l([0,1],\B_{[0,1]}\r)$, then $(\v,\v,\ldots)$ is exchangeable and driven by $\bnu = \delta_{\v}$. In this case, where the length variables are fully dependent, the decreasingly ordered Geometric weights, $\w_j = \v(1-\v)^{j-1}$, for $j \geq 1$, are recovered, so that the corresponding ESB becomes a Geometric process \citep{FMW10}. In terms of the dependence between length variables, at the other end of the spectrum, we find i.i.d length variables, $\{\v_i \iid \nu_0;\, i \geq 1\}$. This sequence, $(\v_i)_{i \geq 1}$, is trivially exchangeable and driven by the deterministic probability measure $\bnu = \nu_0$, clearly the respective ESB recovers a stick-breaking process featuring i.i.d. length variables \citep[see for example][]{S94,IJ01,P96b}. In particular, if $\nu_0$ stands for a Beta distribution with parameters $(1,\theta)$, denoted by $\Be(1,\theta)$, so that $\{\v_i \iid \Be(1,\theta);\, i \geq 1\}$, the ESB corresponds to a Dirichlet process with total mass parameter $\theta$. 

From a Bayesian perspective, especially if the distribution of the species sampling process is to be regarded as a mixing prior, one of the first properties one should analyze is whether
the weights sum up to one. In this case the almost surely discrete species sampling process is termed proper. Another property of interest is whether a species sampling process has full support. This property assures that if the support of the base measure, $\mu_0$, is $S$, then the weak topological support of the distribution of the species sampling process is the set of all probability measures over $(S,\B_S)$ \citep[see][]{BO14}. Dirichlet and Geometric processes are both proper and have full support under minor conditions of $\nu_0$, the following theorem generalizes this result to the complete class of ESBs.
}

\begin{theo}\label{theo:exch_SB}
Let $\bmu$ be an ESB, with exchangeable length variables, $(\v_i)_{i\geq 1}$, driven by the random probability measure $\bnu$. Let us denote $\nu_0 = \Esp[\bnu]$. 
\begin{itemize}
\item[i)] If there exists $\varepsilon > 0$ such that $(0,\varepsilon)$ is contained in the support of $\nu_0$, $\bmu$ has full support.
\item[ii)] $\bmu$ is proper if and only if $\bnu(\{0\})< 1$ almost surely.
\end{itemize}
\end{theo}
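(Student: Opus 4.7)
The starting point is to rewrite properness: $\bmu$ is proper iff $\sum_{j\geq 1}\w_j = 1$ a.s., which via the stick-breaking relation \eqref{eq:sb} amounts to $\prod_{i \geq 1}(1-\v_i) = 0$ a.s. I would then condition on $\bnu$, exploiting that by de Finetti the length variables are conditionally i.i.d.\ with law $\bnu$. On $\{\bnu(\{0\}) < 1\}$ one can select some $\eta > 0$ with $\bnu((\eta,1]) > 0$ and apply the conditional second Borel--Cantelli lemma to conclude that $\v_i > \eta$ for infinitely many $i$ a.s., whence $\prod_{i=1}^N(1-\v_i) \leq (1-\eta)^{\#\{i\leq N:\v_i>\eta\}} \to 0$. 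Conversely, on $\{\bnu = \delta_0\}$ every $\v_i$ vanishes and the infinite product equals one, so $\bmu$ fails to be proper on this event; taking expectations over $\bnu$ then yields the stated equivalence.

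\textbf{Part (i), reduction.} Since finitely supported probability measures are weakly dense in the space of probability measures on $S$, the plan is to reduce full support to showing that for every family of disjoint open sets $O_1,\ldots,O_k \subset S$, every $(p_1,\ldots,p_k)$ with $p_i > 0$ and $\sum_i p_i = 1$, and every $\delta > 0$,
\[
\Prob[\,|\bmu(O_i) - p_i| < \delta,\ i=1,\ldots,k\,] > 0,
\]
where the characterization of \cite{BO14} can be invoked as a shortcut. The hypothesis $(0,\varepsilon) \subseteq \mathrm{supp}(\nu_0)$ will enter through de Finetti: for any $0 < a < b < \varepsilon$ and any $N \in \N$,
\[
\Prob[\v_i \in (a,b),\ i \leq N] = \Esp\l[\bnu((a,b))^N\r] > 0,
\]
because $\nu_0((a,b)) = \Esp[\bnu((a,b))] > 0$ forces $\bnu((a,b)) > 0$ on an event of positive probability.

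\textbf{Finishing the argument and main obstacle.} Picking $a,b$ small and $N$ correspondingly large so that on $A_N = \{\v_i\in(a,b),\ i\leq N\}$ the first $N$ weights are simultaneously small (uniformly below $\delta/(2k)$) while $\sum_{j\leq N}\w_j > 1 - \delta/2$, a greedy bin-packing argument supplies, for each realization in $A_N$, a measurable assignment $j \mapsto i(j) \in \{1,\ldots,k\}$ satisfying $|\sum_{j:i(j)=i}\w_j - p_i| \leq \delta/2$. Conditionally on $\W$ the atoms $(\bxi_j)$ are i.i.d.\ $\mu_0$, so the event that $\bxi_j \in O_{i(j)}$ for every $j \leq N$ has conditional probability at least $(\min_i \mu_0(O_i))^N > 0$ (using that $\mu_0$ has full support on $S$); integrating over $\W$ on $A_N$ yields the required positive probability. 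The most delicate step is this combinatorial/measurability piece: producing a bin-packing that works uniformly across weight realizations in $A_N$ and delivers a measurable assignment, so that the conditional lower bound can be integrated. Reconciling this with the exchangeable (and generally dependent) structure of the $\v_i$'s—which is precisely what conditioning on $\bnu$ allows—is the technical crux.
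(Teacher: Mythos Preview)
Your proposal is correct; both parts go through. The routes differ from the paper's in ways worth noting.

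For part (ii), the paper takes a moment argument rather than Borel--Cantelli: since $\prod_{i\leq j}(1-\v_i)$ is $[0,1]$-valued and monotone, it tends to $0$ a.s.\ iff its expectation does, and by de Finetti $\Esp\bigl[\prod_{i\leq j}(1-\v_i)\bigr]=\Esp\bigl[(1-\Esp[\v_1\mid\bnu])^j\bigr]$, which vanishes iff $\Esp[\v_1\mid\bnu]>0$ a.s., i.e.\ iff $\bnu(\{0\})<1$ a.s. Your conditional Borel--Cantelli argument is equally valid and arguably more transparent; the paper's buys a one-line computation at the cost of the (easy) equivalence between a.s.\ and $L^1$ convergence for bounded monotone sequences.

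For part (i), the paper's proof is far shorter than your sketch. It simply invokes Proposition~7 of \cite{BO14}, which reduces full support to: for every $0<\varepsilon'<1$ there is $0<\delta<\varepsilon'$ with $\Prob\bigl[\bigcap_{i\leq n}\{\delta<\v_i<\varepsilon'\}\bigr]>0$ for all $n$. This is then verified in one stroke via Jensen: with $\varepsilon''=\varepsilon\wedge\varepsilon'$ and $\delta=\varepsilon''/2$, $\Esp[\bnu(\delta,\varepsilon'')^n]\geq\nu_0(\delta,\varepsilon'')^n>0$. Your bin-packing and measurable-assignment discussion is essentially a re-derivation of BO14's criterion; since you already flag that reference as the shortcut, the entire ``main obstacle'' paragraph can be dropped and the proof collapses to a few lines. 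Your alternative justification of positivity (that $\nu_0((a,b))>0$ forces $\bnu((a,b))>0$ with positive probability, hence $\Esp[\bnu((a,b))^N]>0$) is correct too, but Jensen gives it more cheaply.
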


In the context of the above theorem, if $0 = \nu_0(\{0\}) = \Esp[\bnu(\{0\})]$, we have that $\bnu(\{0\}) = 0$ almost surely. Hence, a sufficient condition to ensure $\sum_{j \geq 1}\w_j = 1$, is that $0$ is not an atom of $\nu_0$, that is to say, $\Prob[\v_i = 0] = 0$. For example, if $\nu_0 = \Be(a,b)$ for some $a,b > 0$, so that marginally $\v_i \sim \Be(a,b)$, then $0$ is not an atom of $\nu_0$. Furthermore, the support of a $\Be(a,b)$ distribution is $[0,1]$, which means that the conditions given in (i) and (ii) of Theorem \ref{theo:exch_SB} are satisfied and we have the following corollary.

\begin{cor}\label{cor:exch_SB_beta}
Let $\bmu$ be an ESB, with exchangeable length variables, $(\v_i)_{i\geq 1}$, such that marginally $\v_i \sim \Be(a,b)$. Then, $\bmu$ is proper and it has full support. 
\end{cor}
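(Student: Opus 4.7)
The plan is simply to check that the hypotheses of Theorem \ref{theo:exch_SB} are both satisfied when $\v_i \sim \Be(a,b)$ marginally. Since $(\v_i)_{i \geq 1}$ is exchangeable and driven by $\bnu$, the de Finetti--Hewitt--Savage representation gives $\v_i \mid \bnu \iid \bnu$, so the marginal law of $\v_i$ is $\Esp[\bnu] = \nu_0$. The assumption in the corollary therefore identifies $\nu_0 = \Be(a,b)$, and the whole argument reduces to reading off two elementary facts about the Beta distribution.

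For part (i) of Theorem \ref{theo:exch_SB}, I would note that the $\Be(a,b)$ density is strictly positive on $(0,1)$ for any $a,b>0$, so $\mathrm{supp}(\nu_0) = [0,1]$. In particular, any $\varepsilon \in (0,1]$ works, since $(0,\varepsilon) \subset \mathrm{supp}(\nu_0)$. This yields that $\bmu$ has full weak support.

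For part (ii), I would use that $\Be(a,b)$ is absolutely continuous with respect to Lebesgue measure on $[0,1]$, which gives $\nu_0(\{0\}) = 0$. Since $\bnu(\{0\}) \geq 0$ and $\Esp[\bnu(\{0\})] = \nu_0(\{0\}) = 0$, we conclude $\bnu(\{0\}) = 0$ almost surely, and in particular $\bnu(\{0\}) < 1$ almost surely. Theorem \ref{theo:exch_SB}(ii) then yields that $\bmu$ is proper.

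There is no real obstacle: the corollary is essentially a bookkeeping specialization of Theorem \ref{theo:exch_SB}, and the only thing one has to be careful about is to avoid the temptation of imposing conditions on the joint law of $(\v_i)_{i\geq 1}$ beyond exchangeability and the $\Be(a,b)$ marginal assumption, since both hypotheses of the theorem only depend on $\nu_0 = \Esp[\bnu]$.
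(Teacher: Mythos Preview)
Your proof is correct and follows essentially the same route as the paper: the paper's justification (given in the paragraph immediately preceding the corollary) checks that $\nu_0 = \Be(a,b)$ has support $[0,1]$ for part (i), and that $\nu_0(\{0\}) = 0$ forces $\bnu(\{0\}) = 0$ almost surely for part (ii), exactly as you do. Your write-up is slightly more explicit in justifying the identification $\nu_0 = \Esp[\bnu]$ via de Finetti, but the argument is the same.
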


{
Notice that Geometric processes with a Beta distributed length variable and stick-breaking processes featuring i.i.d. Beta length variables, including Dirichlet processes, are all particular cases of the species sampling processes in Corollary \ref{cor:exch_SB_beta}. Our next result explains how Geometric and Dirichlet processes can be recover as limits of other ESBs.

\begin{theo}\label{theo:exch_SB_limit_0}
Consider some diffuse probability measures $\mu_0,\mu^{(1)}_0,\mu^{(2)}_0,\ldots$, over $(S,\B_S)$, such that as $n \to \infty$, $\mu^{(n)}_0$ converges weakly in distribution to $\mu_0$. For each $n \geq 1$, let $\bnu^{(n)}$ be a random probability measure over $\l([0,1],\B_{[0,1]}\r)$, with $\bnu^{(n)}(\{0\}) < 1$ almost surely, and let $\bmu^{(n)}$ be an ESB with base measure $\mu_0^{(n)}$ and exchangeable length variables $\V^{(n)} = \l(\v^{(n)}_i\r)_{i \geq 1}$ with directing random measure $\bnu^{(n)}$. Denote by $\W^{(n)} = \SB\l[\V^{(n)}\r]$ the weights of $\bmu^{(n)}$.
\begin{itemize}
\item[i)] If $\bnu^{(n)}$ converges weakly in distribution to a deterministic probability measure $\nu_0$ with $\nu_0 \neq \delta_{0}$, then $\bmu^{(n)}$ converges weakly in distribution to a stick-breaking process, $\bmu$, with base measure $\mu_0$ and featuring independent length variables $\{\v_i \iid \nu_0;\, i \geq 1\}$, as $n \to \infty$. In particular if $\nu_0 = \Be(1,\theta)$, the limit, $\bmu$, is a Dirichlet process with total mass parameter $\theta$, and $\W^{(n)}$ converges in distribution to the size-biased permuted weights of $\bmu$.
\item[ii)] If $\bnu^{(n)}$ converges weakly in distribution to $\delta_{\v}$ for some $[0,1]$-valued random variable $\v \sim \nu_0$, where $\nu_0(\{0\}) = 0$. Then, as $n \to \infty$, $\bmu^{(n)}$ converges weakly in distribution to a Geometric process, $\bmu$, with base measure $\mu_0$ and length variable $\v \sim \nu_0$, and $\W^{(n)}$ converges in distribution to the decreasingly ordered weights of $\bmu$.
\end{itemize}
\end{theo}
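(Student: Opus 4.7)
My plan is to reduce weak convergence of the random probability measures $\bmu^{(n)}$ to the joint convergence in distribution of their defining ingredients: the length variables $\V^{(n)}$, the atoms $(\bxi^{(n)}_j)_{j\geq 1}$ and the base measures $\mu_0^{(n)}$. Since $\V^{(n)}$ is (by construction) independent of the atoms, and the atoms are i.i.d.\ from $\mu^{(n)}_0 \to \mu_0$ weakly, the atom sequence converges in distribution to $(\bxi_j)_{j \geq 1}$ i.i.d.\ from $\mu_0$ in the product topology. Thus the whole argument boils down to controlling the length variables, then transferring convergence through the continuous stick-breaking map $\V \mapsto \SB[\V]$ and, finally, through the continuous map that sends $(\W,(\bxi_j)_j,\mu_0)$ to $\bmu$ in \eqref{eq:ssp}.

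For the length variables I would exploit de Finetti: for any bounded continuous $f:[0,1]^k \to \R$,
\[
\Esp\!\l[f(\v^{(n)}_1,\ldots,\v^{(n)}_k)\r] = \Esp\!\l[\int f(x_1,\ldots,x_k)\,\bnu^{(n)}(dx_1)\cdots\bnu^{(n)}(dx_k)\r].
\]
In case (i) the weak convergence $\bnu^{(n)}\dto \nu_0$ to a deterministic measure is convergence in probability, and the continuous mapping theorem applied to $\nu\mapsto \int f\,d\nu^{\otimes k}$ (continuous under the weak topology for bounded continuous $f$) together with uniform integrability gives that the right-hand side tends to $\int f\,d\nu_0^{\otimes k}$, i.e.\ the finite-dimensional law of an i.i.d.\ $\nu_0$-sequence. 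In case (ii) the same identity, with $\bnu^{(n)}\dto \delta_{\v}$, yields $\Esp[f(\v,\ldots,\v)]$, i.e.\ the law of $(\v,\v,\ldots)$. Consequently $\V^{(n)}\dto (\v_i)_{i\geq 1}$ in the product topology, with the appropriate limit in each case, and applying the continuous stick-breaking map delivers $\W^{(n)}\dto \W$.

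To conclude the weak convergence of $\bmu^{(n)}$ as random elements in the space of probability measures on $(S,\B_S)$, I would use Skorohod representation to realize the joint convergence of $(\W^{(n)},(\bxi^{(n)}_j)_{j\geq 1},\mu^{(n)}_0)$ almost surely, and then for every bounded continuous $g:S\to \R$ check that $\int g\,d\bmu^{(n)} \to \int g\,d\bmu$ a.s., by splitting the atomic and diffuse parts, using dominated convergence on $\sum_j \w^{(n)}_j g(\bxi^{(n)}_j)$ and the weak convergence $\mu_0^{(n)}\wto \mu_0$ on the residual term $(1-\sum_j \w^{(n)}_j)\int g\,d\mu_0^{(n)}$. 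The condition $\nu_0\neq \delta_0$ in (i) and $\nu_0(\{0\})=0$ in (ii) ensure, via Theorem~\ref{theo:exch_SB}(ii), that no mass is lost in the limit, so the limiting $\bmu$ is proper and the splitting is clean.

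The last sentences of each item — that $\W^{(n)}$ converges to the \emph{size-biased} permutation of the Dirichlet weights when $\nu_0 = \Be(1,\theta)$, and to the \emph{decreasingly ordered} Geometric weights when the limit is $\delta_{\v}$ — follow by identifying the limiting law of $\W$ directly. In the Dirichlet case, the stick-breaking weights generated by i.i.d.\ $\Be(1,\theta)$ variables are classically in size-biased order, while in the Geometric case all limiting length variables equal $\v$ so $\w_j = \v(1-\v)^{j-1}$, which is monotonically decreasing in $j$ on $\{\v\in(0,1]\}$. The main obstacle I anticipate is the last step, in particular verifying the continuity/tightness needed to pass from convergence of the pair (weights, atoms) to weak convergence of the induced random probability measures; the possible accumulation of the tail $\sum_{j>N}\w^{(n)}_j$ has to be controlled uniformly in $n$, and this is precisely where the assumption $\bnu^{(n)}(\{0\})<1$ a.s.\ (guaranteeing $\sum_j \w^{(n)}_j=1$ a.s.\ along the sequence) becomes essential.
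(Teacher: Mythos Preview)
Your proposal is correct and follows essentially the same route as the paper's own proof: show $\V^{(n)}\dto\V$ via de Finetti and the assumed weak convergence of $\bnu^{(n)}$, push through the continuous stick-breaking map to get $\W^{(n)}\dto\W$, combine with the (independent) convergence of the i.i.d.\ atoms, and conclude by the continuity of $((s_j)_j,(w_j)_j)\mapsto\sum_j w_j\delta_{s_j}$ on $S^\infty\times\Delta_\infty$, which the paper isolates as Lemma~\ref{lem:dL_cont_map}. Regarding your anticipated obstacle: no uniform-in-$n$ tail control is actually needed, because the continuity lemma (equivalently, your dominated-convergence step) only requires that the \emph{limiting} weights also sum to one---which the hypotheses $\nu_0\neq\delta_0$ in (i) and $\nu_0(\{0\})=0$ in (ii) guarantee via Theorem~\ref{theo:exch_SB}(ii)---so that the \emph{generalized} dominated convergence theorem (or Scheff\'e's lemma) applies with dominating sequence $\|g\|_\infty\,\w^{(n)}_j$ and $\sum_j\w^{(n)}_j=1\to1=\sum_j\w_j$; in particular there is no diffuse residual term to handle along the sequence either.
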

}

Some important quantities that contain relevant information of the model, are the expectations $\Esp\l[\prod_{j=1}^k\v_j^{a_j}(1-\v_j)^{b_j}\r]$, for non-negative integers $(a_j,b_j)_{j=1}^k$. {For instance, as shown by \cite{P96b,P96}, for a proper species sampling process $\bmu$, with weights sequence $(\w_j)_{j \geq 1}$, taking the form \eqref{eq:sb}, its EPPF is given by
\begin{equation}\label{eq:eppf_sum}
\pi(n_1,\ldots,n_k) = \sum_{(j_1,\ldots,j_k)}\Esp\l[\prod_{i=1}^k\w_{j_i}^{n_i}\r] = \sum_{(j_1,\ldots,j_k)}\Esp\l[\prod_{i=1}^m \v_i^{a_i}(1-\v_i)^{b_i}\r],
\end{equation}
where the sum ranges over all $k$-tuples of distinct positive integers $(j_1,\ldots,j_k)$, and $m$ and $(a_i,b_i)_{i=1}^m$ are functions of $(j_1,\ldots,j_k)$, given by $m = \max\{j_1,\ldots,j_k\}$, $a_i = \sum_{l=1}^{m}n_l\Ind_{\{j_i = l\}}$, and $b_i = \sum_{r > i}a_r$.} \cite{P96b} also proved that
\begin{equation}\label{eq:pepf}
\pi'(n_1,\ldots,n_k) = \Esp\l[\prod_{j=1}^k\w_j^{n_j-1}\prod_{j=1}^{k-1}\l(1-\sum_{i=1}^j \w_i\r)\r] = \Esp\l[\prod_{j=1}^k\v_j^{n_j-1}(1-\v_j)^{\sum_{i>j}n_i}\r]
\end{equation}
is a symmetric function of $(n_1,\ldots,n_k)$ if and only if $(\w_j)_{j \geq 1}$ is invariant under size-biased permutations. In which case $\pi = \pi'$ is precisely the EPPF corresponding to the model. {Evidently, if the distribution of the size-biased permuted weights is not available, computing the EPPF in closed form can be difficult due to the infinite unordered sum in \eqref{eq:eppf_sum}. For a handful of stick-breaking processes, despite the lack of invariance under size-biased permutations, \eqref{eq:eppf_sum} can be simplified \citep[e.g.][]{MW12,RQ15}, however, for most general cases the equation in question cannot be reduced to a more tractable expression.} A way to mitigate this hurdle, is to undertake clustering analysis by means of the so-called latent allocation variables \citep[see][]{FMW10b,FMW19}. These random variables contain complete information about the partition structure, and despite the ordering of the weights, their finite dimensional distributions can be expressed in terms of expectations of power products of the length variables. Namely, for $\{\x_i\mid \bmu \iid \bmu;\, i \geq 1\}$ where $\bmu = \sum_{j \geq 1}\w_j \delta_{\bxi_j}$ is a proper species sampling process, we define the latent allocation variables $(\d_i)_{i \geq 1}$ through  $\d_i = j$ if and only if $\x_i = \bxi_j$, so that
\begin{equation}\label{eq:d|W}
\l\{\d_i \mid \W \iid \sum_{j \geq 1}\w_j\delta_j; \, i \geq 1\r\}.
\end{equation}
The diffuseness of the base measure implies $\bPi(\x_{1:n})$ is equal almost surely to $\bPi(\d_{1:n})$. Furthermore, the conditional distribution of $(\x_1,\ldots,\x_n)$ given $\bPi(\x_{1:n})$ is that of $(\x_1,\ldots,\x_n)$ given $(\d_1,\ldots,\d_n)$. From \eqref{eq:d|W} we can easily compute for every $n \geq 1$ and any positive integers $d_1,\ldots,d_n$,
\begin{equation}\label{eq:d_pepf}
\Prob[\d_1 = d_1,\ldots,\d_n = d_n] = \Esp\l[\prod_{j=1}^{k}\w_j^{r_j}\r] = \Esp\l[\prod_{j=1}^{k}\v_j^{r_j}(1-\v_j)^{t_j}\r],
\end{equation}
where $k = \max\{d_1,\ldots,d_n\}$, $r_j = \sum_{i=1}^n \Ind_{\{d_i = j\}}$ and $t_j = \sum_{i=1}^n \Ind_{\{d_i > j\}} = \sum_{i>j} r_i$. In particular if the length variables are exchangeable, de Finetti's theorem yields
\[
\Esp\l[\prod_{j=1}^{k}\v_j^{r_j}(1-\v_j)^{t_j}\r] = \int\l\{ \prod_{j=1}^k\int v^{r_j}(1-v)^{t_j}\nu(dv) \r\}\msf{Q}(d\nu)
\]
where $\msf{Q}$ is the law of the directing random measure of $(\v_i)_{i\geq 1}$.

\section{Exchangeable length variables driven by species sampling process}\label{sec:SBESSP}

{In this section we will analyze ESBs with length variables driven by another species sampling process, $\bnu$.} To be precise, we will study general ESBs denoted by $\bmu$, with length variables $\{\v_i\mid \bnu \iid \bnu;\, i \geq 1\}$, for some species sampling process, $\bnu$, over $\l([0,1],\B_{[0,1]}\r)$. We will denote by $\pi_{\nu}$ the EPPF corresponding to $\bnu$, by $\rho_{\nu}$ its tie probability, and by $\nu_0$ the base measure of $\bnu$. Noting that the base measure satisfies $\nu_0 = \Esp[\bnu]$, from the first part of Theorem \ref{theo:exch_SB}, we know that if $(0,\varepsilon)$ belongs to the support of $\nu_0$, for some $0 <\varepsilon < 1$, then $\bmu$ has full support. Furthermore, as $\bnu$ is a species sampling process, by definition its base measure, $\nu_0$, is required to be diffuse, so that it can not have an atom in $0$, thus $\bmu$ must be proper. This gives the following corollary of Theorem \ref{theo:exch_SB}.

{
\begin{cor}\label{cor:sum1}
Let $\bmu$ be an ESB with length variables, $(\v_i)_{i \geq 1}$, that are driven by a species sampling process, $\bnu$, with base measure $\nu_0$. Then $\bmu$ is proper, and if $(0,\varepsilon)$ is contained in the support $\nu_0$, for some $\varepsilon > 0$, $\bmu$ also has full support.
\end{cor}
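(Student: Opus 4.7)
The plan is to derive this corollary as a direct consequence of Theorem \ref{theo:exch_SB}, using the fact that a species sampling process has, by definition, a diffuse base measure. So the first step is to invoke Theorem \ref{theo:exch_SB}(i): the assumption that $(0,\varepsilon)$ is contained in the support of $\nu_0$ is exactly the hypothesis of that statement, so full support of $\bmu$ is immediate once that condition is granted.

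Next, for properness, I would apply Theorem \ref{theo:exch_SB}(ii), which requires verifying $\bnu(\{0\}) < 1$ almost surely. Here the key observation is that $\bnu$ is a species sampling process, hence its base measure $\nu_0 = \Esp[\bnu]$ is diffuse; in particular $\nu_0(\{0\}) = 0$. Since $\bnu(\{0\})$ is a non-negative random variable with expectation $\nu_0(\{0\}) = 0$, it must equal $0$ almost surely. In particular $\bnu(\{0\}) < 1$ almost surely, which yields properness by Theorem \ref{theo:exch_SB}(ii). Note that this gives us the stronger conclusion $\bnu(\{0\}) = 0$ almost surely, so $\Prob[\v_i = 0] = 0$ for every $i$, consistent with the remark following Theorem \ref{theo:exch_SB}.

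There is really no main obstacle here — the corollary is a clean specialization, and the only subtle point worth flagging in the write-up is the passage from $\nu_0(\{0\}) = 0$ (diffuseness) to $\bnu(\{0\}) = 0$ almost surely via the identity $\nu_0 = \Esp[\bnu]$, which uses the non-negativity of $\bnu(\{0\})$. Both parts of the conclusion then follow at once from the two halves of Theorem \ref{theo:exch_SB}.
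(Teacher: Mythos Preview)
Your proposal is correct and follows essentially the same route as the paper: the paper derives the corollary directly from Theorem~\ref{theo:exch_SB}, using that $\nu_0 = \Esp[\bnu]$ is diffuse (so $\nu_0(\{0\})=0$, forcing $\bnu(\{0\})=0$ a.s.) for properness, and invoking part~(i) verbatim for full support. There is nothing to add.
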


As mentioned at the beginning of Section \ref{sec:SBE}, Dirichlet and Geometric processes are examples of ESBs, as they have exchangeable length variables, $\{\v_i\mid \bnu \iid \bnu;\, i \geq 1\}$, with  $\bnu = \nu_0 = \Be(1,\theta)$ and $\bnu = \delta_{\v}$, respectively. Clearly, Dirichlet and Geometric processes also belong to the sub-class of ESBs driven by species sampling processes. An appealing advantage of restricting the study to this class, becomes evident when we specialize Theorem \ref{theo:exch_SB_limit_0}. In effect, whilst Theorem \ref{theo:exch_SB_limit_0} modulates the convergence of ESBs to Geometric and Dirichlet processes by means of random probability measures, the following corollary controls the convergence in terms of the underlying tie probability, $\rho_{\nu} = \Prob[\v_1, = \v_2] \in [0,1]$.

\begin{cor}\label{cor:exch_SB_limit}
Consider some diffuse probability measures $\mu_0,\mu^{(1)}_0,\mu^{(2)}_0,\ldots$, over $(S,\B_S)$, and some diffuse probability measures, $\nu_0,\nu^{(1)}_0,\nu^{(2)}_0,\ldots$,  over $\l([0,1],\B_{[0,1]}\r)$. Say that as $n \to \infty$, $\mu^{(n)}_0$ converges weakly to $\mu_0$ and $\nu^{(n)}_0$ converges weakly to $\nu_0$. For $n \geq 1$, let $\rho_{\nu}^{(n)} \in (0,1)$, and consider the ESB $\bmu^{(n)}$ with base measure $\mu^{(n)}_0$, and length variables $\l\{\v^{(n)}_i\mi \bnu^{(n)} \iid \bnu^{(n)};\, i \geq 1\r\}$, where $\bnu^{(n)}$ is a species sampling process with base measure $\nu^{(n)}_0$ and tie probability $\rho_{\nu}^{(n)}$. Also consider the stick-breaking weights of $\bmu^{(n)}$, $\W^{(n)} = \SB\l[\V^{(n)}\r]$, where $\V^{(n)}=\l(\v^{(n)}_i\r)_{i \geq 1}$.
\begin{itemize}
\item[i)] If $\rho_{\nu}^{(n)} \to 0$, as $n \to \infty$, then $\bmu^{(n)}$ converges weakly in distribution to a stick-breaking process, $\bmu$, with base measure, $\mu_0$, and independent length variables $\{\v_i \iid \nu_0;\, i \geq 1\}$. Particularly, if $\nu_0 = \Be(1,\theta)$, $\bmu$ is a Dirichlet process with total mass parameter $\theta$, and $\W^{(n)}$ converges in distribution to the size-biased permuted weights of $\bmu$.
\item[ii)] If $\rho_{\nu}^{(n)} \to 1$, as $n \to \infty$, then  $\bmu^{(n)}$ converges weakly in distribution to a Geometric process, $\bmu$, with base measure $\mu_0$ and length variable $\v \sim \nu_0$, and $\W^{(n)}$ converges in distribution to the decreasingly ordered weights of $\bmu$.
\end{itemize}
\end{cor}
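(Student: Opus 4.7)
The plan is to deduce Corollary \ref{cor:exch_SB_limit} from Theorem \ref{theo:exch_SB_limit_0} by converting the tie-probability hypothesis into the weak convergence in distribution of the directing random measures $\bnu^{(n)}$ demanded by that theorem. The key input is the qualitative assertion recorded at the end of Section \ref{sec:exch_ssp} (and justified in Appendix \ref{sec:exch_ssp_supp}): for an exchangeable sequence driven by a species sampling process, the tie probability alone controls the regime of dependence, pushing the sequence toward conditional independence as $\rho \to 0$ and toward an almost surely constant sequence as $\rho \to 1$.

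For part (i), I apply this principle to the length variables $\V^{(n)} = (\v_i^{(n)})_{i \geq 1}$. Since $\rho_\nu^{(n)} \to 0$ and $\nu_0^{(n)} \to \nu_0$ weakly, $\V^{(n)}$ converges in distribution in $[0,1]^\N$ to an i.i.d.\ sequence with common law $\nu_0$. By de Finetti, this transfers to weak convergence in distribution of $\bnu^{(n)}$ to the deterministic measure $\nu_0$ on $([0,1], \B_{[0,1]})$. Because $\nu_0$ is diffuse we have $\nu_0 \neq \delta_0$, so Theorem \ref{theo:exch_SB_limit_0}(i) applies and yields the Dirichlet and size-biased conclusions.

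Part (ii) is completely parallel: with $\rho_\nu^{(n)} \to 1$ and $\nu_0^{(n)} \to \nu_0$ weakly, $\V^{(n)}$ converges in distribution to $(\v,\v,\ldots)$ with $\v \sim \nu_0$, which translates via de Finetti to $\bnu^{(n)} \to \delta_\v$ weakly in distribution; diffuseness of $\nu_0$ gives $\nu_0(\{0\}) = 0$, and Theorem \ref{theo:exch_SB_limit_0}(ii) applies.

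The main obstacle is the rigorous bridge between convergence of tie probabilities and weak convergence in distribution of the random measures $\bnu^{(n)}$. I would handle this through the moment identity \eqref{eq:means_samples}, which expresses $\Esp\l[\prod_j (\v_j^{(n)})^{a_j}(1-\v_j^{(n)})^{b_j}\r]$ as a finite sum over partitions with EPPF weights and $\nu_0^{(n)}$-integrals as ingredients. In the regime $\rho_\nu^{(n)} \to 0$, since $\pi_\nu(2)=\rho_\nu$ and the addition rule \eqref{eq:add_rule} bounds all non-singleton EPPF values by $\rho_\nu$, the entire EPPF collapses to that of the fully independent exchangeable sequence; in the regime $\rho_\nu^{(n)} \to 1$, a dual argument forces the EPPF onto the one-block partition, recovering the fully dependent case. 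The integrals converge by the weak convergence $\nu_0^{(n)} \to \nu_0$, and compactness of $[0,1]$ delivers tightness of $(\bnu^{(n)})$ for free, so the convergence of all mixed moments lifts to weak convergence in distribution of $\bnu^{(n)}$, closing the loop.
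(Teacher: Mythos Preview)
Your plan matches the paper's proof exactly at the structural level: the paper simply invokes Theorem~\ref{theo:rho_limit} (the result you call the ``qualitative assertion'' from Section~\ref{sec:exch_ssp}) to obtain $\bnu^{(n)}\dwto\nu_0$ when $\rho_\nu^{(n)}\to 0$ and $\bnu^{(n)}\dwto\delta_{\v}$ when $\rho_\nu^{(n)}\to 1$, and then applies Theorem~\ref{theo:exch_SB_limit_0}. Your EPPF-collapse moment argument is a legitimate alternative proof of that bridge---the paper instead proves Theorem~\ref{theo:rho_limit} via a direct $\cl{L}_2$ computation for (i) and an ordered-weights argument for (ii)---but for the corollary itself you need only cite it, and the detour through $\V^{(n)}$ followed by a de~Finetti transfer back to $\bnu^{(n)}$ is unnecessary since Theorem~\ref{theo:rho_limit} already delivers the convergence of $\bnu^{(n)}$ directly.
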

}

{ 
Corollary \ref{cor:exch_SB_limit} states that by means of ESBs featuring species sampling driven length variables, we can approximate with arbitrary precision the distribution of a Geometric process, by making $\rho_{\nu} \to 1$. Alternatively, whenever the length variables have $\Be(1,\theta)$ marginals, by letting $\rho_{\nu} \to 0$, we can arbitrarily approximate Dirichlet priors. Since $[0,1]$ is a set with no gaps, our result sets up a continuous bridge between Geometric and Dirichlet processes, and provides a new interpretation of these random probability measures as opposite extreme points of the class of ESBs. Corollary \ref{cor:exch_SB_limit} also has important consequences when it comes to understanding the ordering of the weights in question. Indeed, as $\rho_{\nu} \to 1$, the weights become more likely to be decreasingly ordered, and in the special case where the length variables have $\Be(1,\theta)$ marginals, the ordering of the weights ranges from a size-biased order to a decreasing order. To understand better how the stochastic ordering of the weights is affected by $\rho_{\nu}$ we provide the following result. 
}

\begin{theo}\label{theo:weights_ord}
{
Consider some exchangeable length variables $\{\v_i\mid \bnu \iid \bnu;\, i \geq 1\}$, where $\bnu$ is a species sampling process with base measure $\nu_0$, EPPF $\pi_{\nu}$, and tie probability $\rho_{\nu}$. Set $\l(\w_j\r)_{j \geq 1} = \SB\l[\l(\v_i\r)_{i \geq 1}\r]$. Then, for every $j \geq 1$,}
\begin{itemize}
{
\item[a)] $\Prob\l[\w_j \geq \w_{j+1}\r] = \rho_{\nu} + (1-\rho_{\nu})\Esp\l[\overrightarrow{\nu_0}(c(\v))\r]$,
where $c(v) = 1 \wedge v(1-v)^{-1}$ for every $v \in [0,1]$,  $\v \sim \nu_0$, and $\overrightarrow{\nu_0}$ is the distribution function of $\nu_0$, that is $\overrightarrow{\nu_0}(x) = \nu_0([0,x])$.
\item[b)] Letting $c$ and $\overrightarrow{\nu_0}$ be as in (a),
\begin{align*}
\Prob[\w_{j}\geq \w_{j+1} \mid \w_1,\ldots,\w_{j}] & = \Prob[\v_{j+1}\leq c(\v_j) \mid \v_1,\ldots,\v_{j}]\\
& = \sum_{i=1}^{\K_j}\frac{\pi_{\nu}\l(\n^{(i)}\r)}{\pi_{\nu}(\n)}\Ind_{\{\v^*_i \leq c(\v_j)\}} + \frac{\pi_{\nu}\l(\n^{(\K_j+1)}\r)}{\pi_{\nu}(\n)}\overrightarrow{\nu_0}(c(\v_j)),
\end{align*}
where $\v^*_1,\ldots,\v^{*}_{\K_j}$ are the distinct values that $\{\v_1,\ldots,\v_j\}$ exhibits, $\n = (\n_1,\ldots\n_{\K_j})$ is given by $\n_{i} = |\{l\leq j:\v_l = \v^{*}_i\}|$,  $\n^{(i)} = (\n_1,\ldots \n_{i-1},\n_i+1,\n_{i+1}, \ldots,\n_{\K_j})$ and  $\n^{(\K_j+1)} = (\n_1,\ldots,\n_{\K_j},1)$.
}
\end{itemize}
\end{theo}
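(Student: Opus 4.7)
The plan is to reduce both claims about the weights to statements about the predictive distribution of $\v_{j+1}$ given $(\v_1,\ldots,\v_j)$, via cancellation of the common factor $\prod_{i<j}(1-\v_i)$ in $\w_j$ and $\w_{j+1}$. The pivotal deterministic identity I aim to establish is
$$\{\w_j \geq \w_{j+1}\} = \{\v_{j+1} \leq c(\v_j)\} \quad \text{almost surely.}$$
To justify this, note first that since $\nu_0$ is diffuse, $\Prob[\v_i = 1] = \Esp[\bnu(\{1\})] = 0$, so $\prod_{i<j}(1-\v_i) > 0$ a.s. On this event, if $\v_j > 0$ the inequality $\w_j \geq \w_{j+1}$ reads $\v_j \geq \v_{j+1}(1-\v_j)$, i.e., $\v_{j+1} \leq \v_j/(1-\v_j)$; since $\v_{j+1} \leq 1$ automatically, the bound may be truncated to $c(\v_j) = 1\wedge \v_j/(1-\v_j)$ without changing the event. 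If $\v_j = 0$, then $c(\v_j) = 0$, and both $\w_j \geq \w_{j+1}$ and $\v_{j+1} \leq 0$ reduce to $\v_{j+1}=0$. Since $(\v_1,\ldots,\v_j) \leftrightarrow (\w_1,\ldots,\w_j)$ is a measurable bijection, conditioning on either tuple amounts to the same thing.

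For (b), I would then substitute the prediction rule \eqref{eq:pred_rule} applied to the exchangeable sequence $(\v_i)_{i\geq 1}$, whose directing random measure $\bnu$ is a species sampling process with EPPF $\pi_\nu$ and diffuse base $\nu_0$. This gives
$$\Prob[\v_{j+1} \in \cdot \mid \v_1,\ldots,\v_j] = \sum_{i=1}^{\K_j}\frac{\pi_{\nu}(\n^{(i)})}{\pi_{\nu}(\n)}\delta_{\v^*_i} + \frac{\pi_{\nu}(\n^{(\K_j+1)})}{\pi_{\nu}(\n)}\nu_0,$$
and integrating the indicator $\Ind_{[0,c(\v_j)]}$ against this mixture yields exactly the expression in the statement.

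For (a), by exchangeability of the length variables, $\Prob[\v_{j+1}\leq c(\v_j)] = \Prob[\v_2 \leq c(\v_1)]$ for every $j \geq 1$, so it suffices to compute the latter. I would invoke the two-variable conditional distribution $\Prob[\v_2 \in \cdot \mid \v_1] = \rho_{\nu}\,\delta_{\v_1} + (1-\rho_{\nu})\nu_0$ derived in Section \ref{sec:exch_ssp}, combined with the elementary observation that $\v_1 \leq c(\v_1)$ always holds (if $\v_1 \leq 1/2$ then $c(\v_1) = \v_1/(1-\v_1) \geq \v_1$; if $\v_1 > 1/2$ then $c(\v_1)=1\geq \v_1$). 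This yields $\Prob[\v_2 \leq c(\v_1)\mid \v_1] = \rho_{\nu} + (1-\rho_{\nu})\overrightarrow{\nu_0}(c(\v_1))$, and taking expectation with respect to the marginal $\v_1 \sim \nu_0$ delivers (a).

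The main obstacle is bookkeeping rather than technical: one must verify the identification $\{\w_j \geq \w_{j+1}\}=\{\v_{j+1}\leq c(\v_j)\}$ a.s.\ with all boundary cases covered, and check that the point mass at $\v_1$ in the two-variable predictive distribution contributes its full weight $\rho_{\nu}$ because $\v_1 \leq c(\v_1)$. Once those points are settled, both assertions fall out by direct substitution into the prediction rule already discussed in the preliminaries.
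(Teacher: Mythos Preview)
Your proposal is correct and follows essentially the same route as the paper's proof: both reduce the event $\{\w_j\geq\w_{j+1}\}$ to $\{\v_{j+1}\leq c(\v_j)\}$ via diffuseness of $\nu_0$, identify the $\sigma$-fields of $(\w_1,\ldots,\w_j)$ and $(\v_1,\ldots,\v_j)$, and then invoke the species-sampling prediction rule. The only cosmetic difference is in part (a): the paper splits on the event $\{\v_j=\v_{j+1}\}$ versus $\{\v_j\neq\v_{j+1}\}$ and uses that, conditionally on the latter, the pair is i.i.d.\ from $\nu_0$, whereas you condition on $\v_1$ and use the two-point predictive $\Prob[\v_2\in\cdot\mid\v_1]=\rho_\nu\delta_{\v_1}+(1-\rho_\nu)\nu_0$ together with the observation $\v_1\leq c(\v_1)$; these are equivalent formulations of the same computation.
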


{
A curious highlight about Theorem \ref{theo:weights_ord} is that $\Prob[\w_{j}\geq \w_{j+1}]$ does not depends on $j$, in effect, for every $j \geq 1$, $\Prob[\w_{j}\geq \w_{j+1}]$ is simply a linear combination, modulated by $\rho_{\nu}$, between a quantity determined by $\nu_0$, and one. This means that for a fixed base measure, $\nu_0$, by tuning the tie probability, $\rho_{\nu}$, we can control how close is $\Prob[\w_{j}\geq \w_{j+1}]$ to $\Esp\l[\overrightarrow{\nu_0}(c(\v))\r]$ or one. Theorem  \ref{theo:weights_ord} (b) computes the conditional probability $\Prob[\w_{j}\geq \w_{j+1} \mid \w_1,\ldots,\w_{j}]$, which is completely determined by the EPPF, $\pi_{\nu}$, the base measure $\nu_0$, and the first $j$ length variables. To spell out the corresponding equation, note that given the first $j$ weights, the probability that $\w_{j} \geq \w_{j+1}$ is precisely the probability that $\v_{j+1} = \v^{*}_i$ for some $\v^{*}_i \leq c(\v_j)$ or that $\v_{j+1}$ takes a new value, say $\v^{*}_{\K_j+1}$, which is smaller than $c(\v_j)$.}

The last property we analyze for this type of species sampling processes are finite dimensional distributions of the latent allocation random variables $\d_1,\d_2,\ldots$, as in \eqref{eq:d_pepf}. It is clear that these probabilities can be written in terms of the expectation of power products of the length variables. Equation \eqref{eq:means_samples} explains how to compute these expectations when the length variables are driven by a species sampling process. Hence, the following result is a straightforward consequence of \eqref{eq:means_samples} and \eqref{eq:d_pepf}.

\begin{theo}\label{theo:Ev_d}
Consider the weights sequence, $\W = (\w_j)_{j \geq 1}$, as in \eqref{eq:sb}, with length variables $\{\v_i\mid \bnu \iid \bnu;\, i \geq 1\}$, for some species sampling process, $\bnu$, with base measure $\nu_0$ and EPPF $\pi_{\nu}$. Then, for $\l\{\d_i \mid \W \iid \sum_{j \geq 1}\w_j\delta_j; \, i \geq 1\r\}$, and every $d_1,\ldots,d_n \in \N$,
\begin{equation}\label{eq:Ev_d}
\begin{aligned}
\Prob[\d_1 = d_1,\ldots,\d_n = d_n] = &\sum_{\{A_1,\ldots,A_m\}}\pi_{\nu}(|A_1|,\ldots,|A_m|) \times\\
&  \quad \times\prod_{j=1}^m\l\{\int_{[0,1]} (v)^{\sum_{i \in A_j}r_i}(1-v)^{\sum_{i \in A_j}t_i}\,\nu_0(dv)\r\},
\end{aligned}
\end{equation}
where $r_i = \sum_{l=1}^{n}\Ind_{\{d_l = i\}}$, $t_i = \sum_{l=1}^{n}\Ind_{\{d_l > i\}}$, and where the sum ranges over the set of all partitions of $\{1,\ldots,k\}$, for $k = \max\{d_1,\ldots,d_n\}$.
\end{theo}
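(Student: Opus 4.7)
The plan is to derive \eqref{eq:Ev_d} by composing the two identities \eqref{eq:d_pepf} and \eqref{eq:means_samples} already at hand. Applying \eqref{eq:d_pepf} directly, the left-hand side of \eqref{eq:Ev_d} equals
\[
\Esp\l[\prod_{j=1}^{k}\v_j^{r_j}(1-\v_j)^{t_j}\r],
\]
with $k=\max\{d_1,\ldots,d_n\}$ and $r_j,t_j$ as in the statement, so it suffices to evaluate this expectation under the hypothesis $\{\v_i \mid \bnu \iid \bnu;\, i \geq 1\}$.

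The next step is to apply \eqref{eq:means_samples} to the exchangeable sequence $(\v_1,\ldots,\v_k)$, which is driven by $\bnu$, with base measure $\nu_0$ and EPPF $\pi_{\nu}$, and to the bounded measurable function $f:[0,1]^k\to\R$ defined by $f(v_1,\ldots,v_k)=\prod_{j=1}^{k}v_j^{r_j}(1-v_j)^{t_j}$. Since $0\leq f\leq 1$ pointwise, the integrability hypothesis of \eqref{eq:means_samples} is trivially satisfied. This rewrites the expectation as a sum over all partitions $\{A_1,\ldots,A_m\}$ of $\{1,\ldots,k\}$, with each term weighted by $\pi_{\nu}(|A_1|,\ldots,|A_m|)$.

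To match the stated closed form, I would simplify the integrand block by block. For a given partition $\{A_1,\ldots,A_m\}$, the indicator $\prod_{\ell=1}^m\prod_{i\in A_\ell}\Ind_{\{l_i=\ell\}}$ appearing in \eqref{eq:means_samples} forces $l_i=\ell$ whenever $i\in A_\ell$, so that $v_{l_i}=v_\ell$. Grouping the factors of $f$ by blocks then gives
\[
f(v_{l_1},\ldots,v_{l_k})=\prod_{\ell=1}^{m}\prod_{i\in A_\ell}v_\ell^{r_i}(1-v_\ell)^{t_i}=\prod_{\ell=1}^{m}v_\ell^{\sum_{i\in A_\ell}r_i}(1-v_\ell)^{\sum_{i\in A_\ell}t_i}.
\]
Fubini's theorem then factors $\int\cdots\nu_0(dv_1)\cdots\nu_0(dv_m)$ into the product of $m$ one-dimensional integrals displayed in \eqref{eq:Ev_d}.

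Because both \eqref{eq:d_pepf} and \eqref{eq:means_samples} are already established, no genuine analytic obstacle remains; the whole proof is a piece of algebraic reorganisation. The only point requiring care is notational bookkeeping: one must keep the original product index $j\in\{1,\ldots,k\}$ conceptually separate from the block label $\ell\in\{1,\ldots,m\}$, and read off the exponents $\sum_{i\in A_\ell}r_i$ and $\sum_{i\in A_\ell}t_i$ correctly from the action of the partition-indicator on the argument of $f$.
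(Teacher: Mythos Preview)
Your proposal is correct and mirrors the paper's own justification: the paper does not give a separate proof but simply states that the result is a straightforward consequence of \eqref{eq:means_samples} and \eqref{eq:d_pepf}, which is precisely the two-step composition you carry out. Your additional care with the block-by-block simplification and the Fubini factorisation merely spells out the algebra the paper leaves implicit.
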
 

\begin{rem}\label{rem:beta_int}
If $\nu_0$ denotes a $\Be(a,b)$ distribution, the integrals in Theorem \ref{theo:Ev_d} are given by
\begin{align*}
\int_{[0,1]} (v)^{\sum_{i \in A_j}a_i}(1-v)^{\sum_{i \in A_j}b_i}\,\nu_0(dv) & = \frac{\Gamma(a+b)\Gamma(a+\sum_{i\in A_j}a_i)\Gamma(b+\sum_{i\in A_j}b_i)}{\Gamma(a)\Gamma(b)\Gamma(a+b+\sum_{i \in A_j}(a_i+b_i))}.
\end{align*}
for any non-negative integers $(a_i,b_i)_{i \geq 1}$.
\end{rem}

\section{Examples}\label{sec:SBEex}
\subsection{Dirichlet driven stick-breaking processes}\label{sec:SBEDP}

\subsubsection{Theoretical results}

To illustrate the results of Section \ref{sec:SBESSP},  we first concentrate in exchangeable length variables, $\V = (\v_i)_{i \geq 1}$, driven by a Dirichlet process $\bnu$ with total mass parameters $\beta$ and base measure $\nu_0$. {In this case we have that the EPPF, $\pi_\nu$, corresponding to $\bnu$ is given by
\begin{equation}\label{eq:eppf_dp}
\Prob[\bPi(\v_{1:k}) = A]= \pi_\nu(|A_1|,\ldots,|A_m|)= \frac{\beta^{m}}{(\beta)_k}\prod_{i=1}^m(|A_i|-1)!
\end{equation}
for every partition $A = \{A_1,\ldots,A_m\}$ of $\{1,\ldots,k\}$ \citep{E72, A74}. In particular we can compute the underlying tie probability $\rho_{\nu} = \pi(2) = 1/(1+\beta)$.} Motivated by Corollaries \ref{cor:exch_SB_beta} and  \ref{cor:exch_SB_limit}, we will further study the case $\nu_0 = \Be(1,\theta)$. Indeed, this assumption guaranties that the corresponding ESB, $\bmu$, is proper and has full support, and as a consequence of Corollary \ref{cor:exch_SB_limit}, it allows us to recover Geometric and Dirichlet processes in the weak limits as $\beta \to 0$ and $\beta \to \infty$, respectively. Any such ESB, $\bmu$, will be called a Dirichlet driven stick-breaking process (DSB) with parameters $(\beta,\theta,\mu_0)$, and its weights, $\W = \SB[\V]$, will be referred to as  Dirichlet driven stick-breaking weights (DSBw) with parameters $(\beta,\theta)$. Next, we specialize the results of Section \ref{sec:SBESSP} to DSBs.

\begin{cor}\label{cor:limit_DSB}
Let $\mu_0,\mu^{(1)}_0,\mu^{(2)}_0$ be diffuse probability measures over $(S,\B_S)$ such that $\mu^{(n)}_0$ converges weakly to $\mu_0$, as $n \to \infty$. For each $n \geq 1$ consider $\beta^{(n)},\theta^{(n)} \in (0,\infty)$, with $\theta^{(n)} \to \theta$ in $(0,\infty)$. Let $\bmu^{(n)}$ be a \emph{DSB} with parameters $\l(\beta^{(n)},\theta^{(n)},\mu^{(n)}_0\r)$, and let $\W^{(n)}$ the corresponding \emph{DSBw} with parameters $\l(\beta^{(n)},\theta^{(n)}\r)$. 
\begin{itemize}
\item[i)] If $\beta^{(n)} \to \infty$, as $n \to \infty$, then $\bmu^{(n)}$ converges weakly in distribution to a Dirichlet process, $\bmu^{(\infty)}$, with total mass parameter $\theta$ and base measure $\mu_0$, and $\W^{(n)}$ converges in distribution to the size-biased permutation of the weights of $\bmu^{(\infty)}$.
\item[ii)]  If $\beta^{(n)} \to 0$, as $n \to \infty$, then $\bmu^{(n)}$ converges weakly in distribution to $\bmu^{(0)}$, where $\bmu^{(0)}$ stands for a Geometric process with base measure $\mu_0$ and length variable $\v^{*} \sim \Be(1,\theta)$, and $\W^{(n)}$ converges in distribution to the decreasingly ordered weights of $\bmu^{(0)}$.
\end{itemize}
\end{cor}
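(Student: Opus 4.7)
The strategy is to identify Corollary \ref{cor:limit_DSB} as a direct specialisation of Corollary \ref{cor:exch_SB_limit}, so the bulk of the work is verifying that the hypotheses of that corollary are met in the \textsf{DSB} setting. Concretely, I need to check three things: (a) weak convergence of the base measures $\mu_0^{(n)}\to\mu_0$, which is assumed; (b) weak convergence of the base measures of the length-variable directors, i.e.\ $\nu_0^{(n)}=\Be(1,\theta^{(n)})\to \Be(1,\theta)=\nu_0$; and (c) that the tie probabilities $\rho_\nu^{(n)}$ of the Dirichlet process $\bnu^{(n)}$ behave as needed ($\to 0$ in case (i) and $\to 1$ in case (ii)).

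For (b), since $\theta^{(n)}\to \theta\in(0,\infty)$, I would simply observe that the distribution function of $\Be(1,\theta)$ is $x\mapsto 1-(1-x)^\theta$, which is continuous and pointwise continuous in $\theta$; this gives weak convergence $\Be(1,\theta^{(n)})\to \Be(1,\theta)$ and the limit measure is diffuse and different from $\delta_0$, which is what the corollary requires. For (c), I would invoke the EPPF formula \eqref{eq:eppf_dp} of a Dirichlet process with total mass $\beta^{(n)}$ to compute
\[
\rho_\nu^{(n)} \;=\; \pi_\nu(2) \;=\; \frac{\beta^{(n)}}{(\beta^{(n)})_2}\,(2-1)! \;=\; \frac{1}{1+\beta^{(n)}},
\]
which lies in $(0,1)$ for every $\beta^{(n)}\in(0,\infty)$. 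Hence $\beta^{(n)}\to\infty$ forces $\rho_\nu^{(n)}\to 0$ and $\beta^{(n)}\to 0$ forces $\rho_\nu^{(n)}\to 1$.

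With (a), (b), (c) in place, part (i) follows by applying Corollary \ref{cor:exch_SB_limit}(i): the weak limit of $\bmu^{(n)}$ is a stick-breaking process with base measure $\mu_0$ and i.i.d.\ length variables distributed as $\Be(1,\theta)$, which is exactly a Dirichlet process with total mass parameter $\theta$ and base measure $\mu_0$; the stated corollary also delivers convergence of $\W^{(n)}$ in distribution to the size-biased permutation of the weights of the limiting Dirichlet process. Part (ii) follows by applying Corollary \ref{cor:exch_SB_limit}(ii), noting that $\Be(1,\theta)$ has no atom at $0$; this yields a Geometric process with base measure $\mu_0$ and length variable $\v^{*}\sim \Be(1,\theta)$, together with convergence of $\W^{(n)}$ to the decreasingly ordered Geometric weights.

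There is essentially no hard step: the argument is a bookkeeping exercise matching the \textsf{DSB} parametrisation $(\beta^{(n)},\theta^{(n)},\mu_0^{(n)})$ to the hypotheses of Corollary \ref{cor:exch_SB_limit}, with the only mildly computational ingredient being the identification $\rho_\nu^{(n)}=1/(1+\beta^{(n)})$ from the Ewens sampling formula \eqref{eq:eppf_dp}. The main conceptual point to flag is that the joint convergence of $\theta^{(n)}\to\theta$ and $\beta^{(n)}\to\infty$ (or $0$) does not create any coupling issue, because the corollary treats the base measure $\nu_0^{(n)}$ and the tie probability $\rho_\nu^{(n)}$ as independent inputs, and both of ours converge in the required modes.
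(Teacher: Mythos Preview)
Your proposal is correct and follows precisely the paper's own approach: the paper's proof is the single sentence ``Corollary \ref{cor:limit_DSB} follows immediately from Corollary \ref{cor:exch_SB_limit} by substituting $\rho^{(n)}_{\nu} = 1/\bigl(1+\beta^{(n)}\bigr)$,'' and you have simply spelled out the hypothesis-checking (weak convergence $\Be(1,\theta^{(n)})\to\Be(1,\theta)$ and the Ewens computation of $\rho_\nu^{(n)}$) that the paper leaves implicit.
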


Corollary \ref{cor:limit_DSB} follows immediately from Corollary \ref{cor:exch_SB_limit} by substituting $\rho^{(n)}_{\nu} = 1/\l(1+\beta^{(n)}\r)$. As to the ordering of DSBw's, we have the following corollary of Theorem \ref{theo:weights_ord}.

\begin{cor}\label{cor:weights_ord}
{
Fix $\theta > 0$, and for each $\beta > 0$, consider a \emph{DSBw}, $\l(\w_j\r)_{j \geq 1}$, with parameters $(\beta,\theta)$. Let us denote by $\2F1$ to the Gauss hypergeometric function. Then, for every $j \geq 1$,}
\begin{itemize}
{
\item[a)]
$\displaystyle 
\Prob\l[\w_j \geq \w_{j+1}\r] = 1 - \frac{\2F1(1,1;\theta+2,1/2)\beta\theta}{2(\beta+1)(\theta+1)},\,
$
for every $\beta > 0$.}
{
\item[b)] 
$\displaystyle
\Prob[\w_{j}\geq \w_{j+1} \mid \w_1,\ldots,\w_{j}] = \frac{1}{\beta+j}\l\{\sum_{i \in \mbf{A}_j}\n_i + \beta\l[1-\l(1-c(\v_j)\r)^{\theta}\r]\r\},
$
where $c(v) = 1\wedge v(1-v)^{-1}$, $\v^*_1,\ldots,\v^{*}_{\K_j}$ are the distinct values that $\{\v_1,\ldots,\v_j\}$ exhibits, $\mbf{A}_j = \{i < \K_j: \v^{*}_i \leq c(\v_j)\}$ and $\n_{i} = |\{l\leq j:\v_l = \v^{*}_i\}|$, for every $i \leq \K_j$.}
\end{itemize}
\end{cor}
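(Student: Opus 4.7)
The corollary is a direct specialization of Theorem \ref{theo:weights_ord} to the case where $\bnu$ is a Dirichlet process with parameter $\beta$ and base measure $\nu_0 = \Be(1,\theta)$. The plan is to substitute the three ingredients that specialize in this setting: the tie probability $\rho_{\nu} = \pi_{\nu}(2) = 1/(1+\beta)$ computed just above the corollary via \eqref{eq:eppf_dp}; the distribution function $\overrightarrow{\nu_0}(x) = 1-(1-x)^{\theta}$ of $\Be(1,\theta)$; and the prediction ratios of the Chinese restaurant EPPF.

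For part (a), I would plug into Theorem \ref{theo:weights_ord}(a) to obtain
$\Prob[\w_j \geq \w_{j+1}] = \tfrac{1}{1+\beta} + \tfrac{\beta}{1+\beta}\Esp\bigl[1-(1-c(\v))^{\theta}\bigr]$
with $\v \sim \Be(1,\theta)$, and then reduce the problem to computing $\Esp[(1-c(\v))^{\theta}]$. Splitting according to whether $\v \leq 1/2$ (so $c(\v)=\v/(1-\v)$, giving $1-c(\v)=(1-2\v)/(1-\v)$) or $\v > 1/2$ (where the integrand vanishes), the expectation collapses to
\[
\Esp[(1-c(\v))^{\theta}] = \theta\int_0^{1/2}(1-v)^{-1}(1-2v)^{\theta}\,dv.
\]
After the substitution $u=2v$ this becomes $(\theta/2)\int_0^1(1-u)^{\theta}(1-u/2)^{-1}\,du$, which matches Euler's integral representation of $\2F1$ with parameters $a=b=1$, $c=\theta+2$, $z=1/2$, yielding the integral equal to $\2F1(1,1;\theta+2,1/2)/(\theta+1)$. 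Assembling these pieces and simplifying gives the stated formula. The only slightly delicate step is the identification of the integral with the hypergeometric representation; everything else is routine algebra.

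For part (b), I would evaluate the prediction ratios appearing in Theorem \ref{theo:weights_ord}(b) under the EPPF \eqref{eq:eppf_dp}. Using $\pi_{\nu}(\mbf{n})=\beta^{\K_j}(\beta)_j^{-1}\prod_{i=1}^{\K_j}(\n_i-1)!$ and the analogous expressions for $\mbf{n}^{(i)}$ and $\mbf{n}^{(\K_j+1)}$, the Pochhammer symbols collapse to a factor $(\beta+j)^{-1}$, leaving
\[
\frac{\pi_{\nu}(\mbf{n}^{(i)})}{\pi_{\nu}(\mbf{n})} = \frac{\n_i}{\beta+j}, \qquad \frac{\pi_{\nu}(\mbf{n}^{(\K_j+1)})}{\pi_{\nu}(\mbf{n})} = \frac{\beta}{\beta+j},
\]
which are the classical Blackwell–MacQueen weights. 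Plugging these into Theorem \ref{theo:weights_ord}(b), together with $\overrightarrow{\nu_0}(c(\v_j)) = 1-(1-c(\v_j))^{\theta}$, and collecting the indicator terms into the sum over $\mbf{A}_j$ produces the stated closed form. I do not anticipate any genuine obstacle here beyond bookkeeping: the main substantive computation is the $\2F1$ identification in part (a).
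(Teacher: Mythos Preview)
Your proposal is correct and follows essentially the same route as the paper: substitute $\rho_{\nu}=1/(1+\beta)$ and $\overrightarrow{\nu_0}(x)=1-(1-x)^{\theta}$ into Theorem \ref{theo:weights_ord}, reduce $\Esp[(1-c(\v))^{\theta}]$ to $\theta\int_0^{1/2}(1-2v)^{\theta}(1-v)^{-1}\,dv$, change variables $u=2v$, and identify the result with the Euler integral for $\2F1(1,1;\theta+2,1/2)$; for (b) the paper likewise just inserts the Blackwell--MacQueen prediction weights $\n_i/(\beta+j)$ and $\beta/(\beta+j)$ into Theorem \ref{theo:weights_ord}(b). The only cosmetic difference is that the paper quotes the Dirichlet prediction rule directly rather than rederiving the ratios from \eqref{eq:eppf_dp}.
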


\begin{rem}
In the context of Corollary \ref{cor:weights_ord}, if $\theta = 1$, we even obtain
\[
\Prob\l[\w_j \geq \w_{j+1}\r] = \frac{1+\beta\log(2)}{1+\beta}.
\]
\end{rem}

{
Other quantities that simplify nicely for DSBs are the finite dimensional distributions of latent allocation variables. For instance, from \eqref{eq:eppf_dp}, using Remark \ref{rem:beta_int} and recalling that $\Gamma(x+n)/\Gamma(x) = \prod_{i=0}^{n-1}(x+i) = (x)_n$, Theorem \ref{theo:Ev_d} specializes the as follows.
}

\begin{cor}\label{cor:Ev_DSB}
Let $\W = (\w_j)_{j \geq 1}$ be a DSBw with parameters $(\beta,\theta)$, and consider the allocation variables $\l\{\d_i \mid \W \iid \sum_{j \geq 1}\w_j\delta_j; \, i \geq 1\r\}$. Then for any positive integers $d_1,\ldots,d_n$,
\begin{equation}\label{eq:Ev_d_DBS}
\begin{aligned}
\Prob[\d_1 = d_1,\ldots,\d_n = d_n] = \sum_{\{A_1,\ldots,A_m\}}\frac{\l(\beta\theta\r)^{m}}{(\beta)_k}\prod_{j=1}^m\frac{(|A_j|-1)!\,(\sum_{i\in A_j}r_i)!}{(\theta+\sum_{i \in A_j}t_{i})_{1+\sum_{i \in A_j}r_i}},
\end{aligned}
\end{equation}
where $r_i = \sum_{l=1}^{n}\Ind_{\{d_l = i\}}$, $t_i = \sum_{l=1}^{n}\Ind_{\{d_l > i\}}$ and sum ranges over the set of all partitions of $\{1,\ldots,k\}$, for $k = \max\{d_1,\ldots,d_n\}$.
\end{cor}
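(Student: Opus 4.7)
The plan is to derive Corollary \ref{cor:Ev_DSB} as a direct specialization of Theorem \ref{theo:Ev_d}, using two ingredients that are already at our disposal: the explicit form of the EPPF of a Dirichlet process, and Remark \ref{rem:beta_int} for the Beta moment integrals. Since the length variables $\{\v_i \mid \bnu \iid \bnu\}$ are driven by a Dirichlet process with total mass $\beta$ and base measure $\nu_0 = \Be(1,\theta)$, Theorem \ref{theo:Ev_d} applies verbatim and reduces the problem to plugging in these two closed-form expressions.

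First I would substitute the Dirichlet EPPF given in \eqref{eq:eppf_dp}, so that each summand in Theorem \ref{theo:Ev_d} picks up the factor $\pi_\nu(|A_1|,\ldots,|A_m|) = \beta^m (\beta)_k^{-1} \prod_{j=1}^m (|A_j|-1)!\,.$ Next I would evaluate the Beta integrals using Remark \ref{rem:beta_int} with $a=1$ and $b=\theta$; for each block $A_j$, setting $R_j := \sum_{i\in A_j} r_i$ and $T_j := \sum_{i \in A_j} t_i$, the integral is
\[
\int_{[0,1]} v^{R_j}(1-v)^{T_j}\,\nu_0(dv) = \frac{\Gamma(1+\theta)\,\Gamma(1+R_j)\,\Gamma(\theta+T_j)}{\Gamma(\theta)\,\Gamma(1+\theta+R_j+T_j)} = \frac{\theta\, R_j!}{(\theta + T_j)_{1+R_j}},
\]
where the last equality uses $\Gamma(x+n)/\Gamma(x) = (x)_n$ on the ratio $\Gamma(1+\theta+R_j+T_j)/\Gamma(\theta+T_j)$. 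Multiplying the $m$ blocks yields a factor $\theta^m$ which combines with $\beta^m$ from the EPPF into $(\beta\theta)^m$, and the remaining block factors assemble exactly into the product claimed in \eqref{eq:Ev_d_DBS}.

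Finally I would combine the pieces: substituting both closed forms into \eqref{eq:Ev_d} produces
\[
\Prob[\d_1 = d_1,\ldots,\d_n = d_n] = \sum_{\{A_1,\ldots,A_m\}} \frac{(\beta\theta)^m}{(\beta)_k} \prod_{j=1}^m \frac{(|A_j|-1)!\,(\sum_{i\in A_j} r_i)!}{\bigl(\theta + \sum_{i\in A_j} t_i\bigr)_{1+\sum_{i\in A_j} r_i}},
\]
which is exactly \eqref{eq:Ev_d_DBS}. The sum over partitions of $\{1,\ldots,k\}$, with $k = \max\{d_1,\ldots,d_n\}$, is inherited unchanged from Theorem \ref{theo:Ev_d}.

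There is no real obstacle here; the argument is a mechanical substitution. The only point requiring minor care is the bookkeeping of the Pochhammer symbol $(\theta + T_j)_{1 + R_j}$, namely verifying that the denominator coming from $\Gamma(1+\theta+R_j+T_j)/\Gamma(\theta+T_j)$ has exactly $1+R_j$ rising factors starting at $\theta+T_j$, so as to match the corollary's notation. Once that is checked, the identification with \eqref{eq:Ev_d_DBS} is immediate.
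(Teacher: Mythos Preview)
Your proposal is correct and follows essentially the same approach as the paper: the paper states that the corollary is obtained from Theorem \ref{theo:Ev_d} by substituting the Dirichlet EPPF \eqref{eq:eppf_dp}, invoking Remark \ref{rem:beta_int} with $a=1$, $b=\theta$, and using $\Gamma(x+n)/\Gamma(x)=(x)_n$ to rewrite the resulting Gamma ratios as Pochhammer symbols. Your computation of the block integral $\theta R_j!/(\theta+T_j)_{1+R_j}$ and the subsequent assembly into $(\beta\theta)^m$ matches this exactly.
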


\subsubsection{Distribution of the number of groups}

A random variable that encloses important information about a species sampling process $\bmu$ is the number of distinct values $\K_n = |\bPi(\x_{1:n})|$, that a sample, $\{\x_i\mid \bmu \iid \bmu;\, i \geq 1\}$ exhibits. For certain species sampling processes, the distribution of $\K_n$ is analytically available  \citep{LMP07b,DeBlasi2015}. However, characterizing this distribution is generally not an easy task. Nonetheless, whenever one can sample from the finite dimensional distributions of the length variables, $(\v_1,\ldots,\v_m)$, and therefore the weights, $(\w_1,\ldots,\w_m)$, drawing samples from $\K_n$ is relatively simple. First sample $\{\u_i \iid \Un(0,1);\, i \geq 1\}$, and  $\w_1,\ldots\w_m$ up the first index, $m$, that satisfies $\sum_{j=1}^{m} \w_j > \max_{k}\u_k$. Define $\d_k = i$, if and only if $\sum_{j=1}^{i-1}\w_j \leq \u_k < \sum_{j=1}^{i}\w_j$, with the convention that the empty sum equals zero. Finally, note that the number of distinct values in $\{\d_1,\ldots,\d_n\}$, is precisely a sample from $\K_n$. 

Denote by $\K_n^{(\beta,\theta)}$ the random variable, $\K_n$, corresponding to a DSB with parameters $(\beta,\theta,\mu_0)$, for simplicity when $\beta = 0$ and $\beta = \infty$ we refer to a Geometric and Dirichlet process, respectively. In Figure \ref{FigKn}, we exhibit the distribution of $\K_n^{(\beta,\theta)}$ for different choices of $\beta$ and $\theta$ and for $n = 20$. Here we illustrate the asymptotic results stated in Corollary \ref{cor:exch_SB_limit} and Corollary \ref{cor:limit_DSB}, by means of $\K_n^{(\beta,\theta)}$. In fact we have a graphical representation of how as $\beta \to 0$ and $\beta \to \infty$, $\K_n^{(\beta,\theta)} \to \K_n^{(0,\theta)}$ and $\K_n^{(\beta,\theta)} \to \K_n^{(\infty,\theta)}$, in distribution. {We also observe that an increment on $\beta$ (decrement of $\rho_{\nu}$) contributes to the distribution of $\K_n^{(\beta,\theta)}$ with a smaller mean and variance. Conversely, decreasing the value of $\beta$, impacts the prior distribution of $\K_n^{(\beta,\theta)}$ with a larger mean and variance, and a heavier right tail, say less informative. We also see that, just as it occurs for the Dirichlet process, for DSBs with the value of $\beta$ fixed, increasing the parameter $\theta$ makes the distribution of $\K_n^{(\beta,\theta)}$ favour larger values. This behaviour can be explained by recalling that marginally $\v_i \sim \Be(1,\theta)$, meaning that larger values of $\theta$ lead to smaller values of the length variables which translates to the smaller values of the first weights, this then leads to more diversity in samples of exchangeable sequences driven by the corresponding DSB.

\begin{figure}[H]
\centering
\includegraphics[scale=0.48]{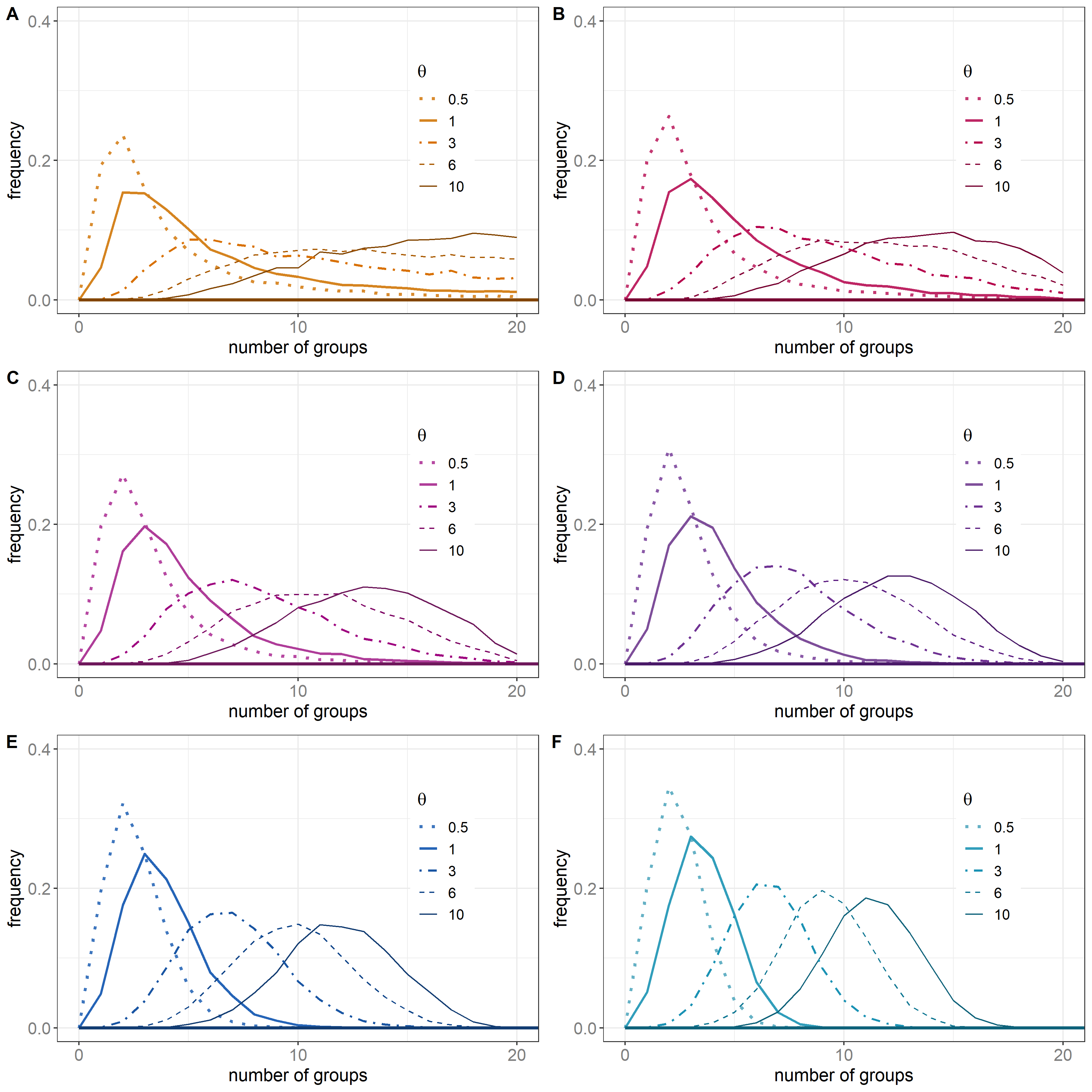} 
\caption{Frequency polygons of samples of size $10000$ from the distribution of $\K_{20}$, corresponding to DSBs with parameters $\beta$ in $\{0,0.5,1,10,100,\infty\}$ ($\msf{A}-\msf{F}$, respectively), and varying $\theta$ in $\{0.5,1,3,6,10\}$.\label{FigKn}}
\end{figure}

Figure \ref{FigEKasymp} provides numerical approximations of the mapping $n \mapsto \Esp\l[\K_n^{(\beta,\theta)}\r]$ for $n \in \{1,\ldots,200\}$, and for distinct values of $\beta$ and $\theta$. We can appreciate that for a fixed value of $\beta = (1-\rho_{\nu})/\rho_{\nu}$, larger values of $\theta$ lead to a faster growth of $\Esp\l[\K_n^{(\beta,\theta)}\r]$ as $n$ increases. This is consistent with the analysis in Figure \ref{FigKn}, in the sense that larger values of $\theta$ translate to $\K_n^{(\beta,\theta)}$ being more prone to take bigger values. Alternatively, if we fix the parameter $\theta$, we see that an increment on $\rho_{\nu} = 1/(\beta+1)$, leads to a faster growth of $\Esp\l[\K_n^{(\beta,\theta)}\r]$ with respect to $n$. Indeed, for the Dirichlet model ($\rho_{\nu} = 0$) we see that $\Esp\l[\K_n^{(\infty,\theta)}\r]$ is the one with the slowest growth, and at the other extreme, for the Geometric process ($\rho_{\nu} = 1$), $\Esp\l[\K_n^{(0,\theta)}\r]$ exhibits the fastest growth. This analysis suggests that for $0 < \rho_{\nu} = 1/(\beta+1) < 1$, the rate at which $\Esp\l[\K_n^{(\beta,\theta)}\r]$ increases is bounded by those corresponding to a Geometric and a Dirichlet model with the same value of $\theta$. Namely, it seems sensible to propose the conjecture $\Esp\l[\K_n^{(\infty,\theta)}\r]\leq\Esp\l[\K_n^{(\beta_2,\theta)}\r]\leq\Esp\l[\K_n^{(\beta_1,\theta)}\r]\leq\Esp\l[\K_n^{(0,\theta)}\r]$ for real numbers $0 < \beta_1 < \beta_2$.}

\begin{figure}[H]
\centering
\includegraphics[scale=0.48]{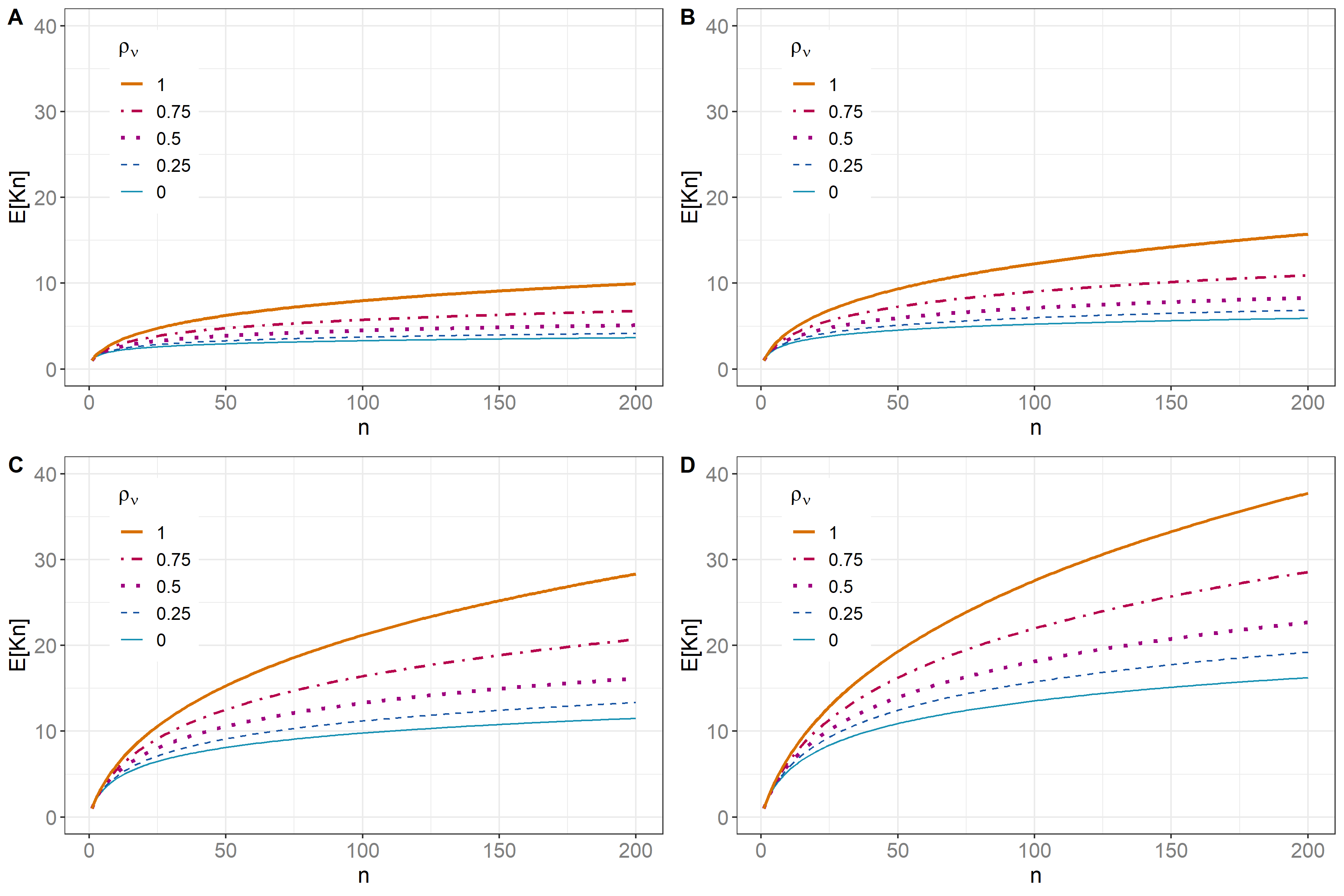} 
\caption{Numerical approximation of the mapping $n \mapsto\Esp[\K_n]$, for $n \in \{1,\ldots,200\}$, corresponding to DSBs with parameters $\theta$ in $\{0.5,1,2.5,4\}$ ($\msf{A}-\msf{D}$, respectively), and for each fixed value of $\theta$, we vary $\beta$ in $\{0,1/3,1,3,\infty\}$ ($\rho_{\nu} \in \{1,0.75,0.5,0.25,0\}$, respectively).\label{FigEKasymp}}
\end{figure}

\subsection{Models beyond Dirichlet driven stick-breaking processes}

The generality of Section \ref{sec:SBESSP} allows us to construct a great variety of  new Bayesian non-parametric priors. In fact, for every known species sampling process, $\bnu$, with available tie probability, $\rho_{\nu}$, the analysis in Section \ref{sec:SBESSP} can be carried out, thus leading to a new model. In addition, if the expression for the EPPF, $\pi_{\nu}$, is manageable, relevant clustering probabilities can be recovered by inserting $\pi_{\nu}$ into the finite sum in \eqref{eq:Ev_d}. Such is the case of Gibbs-type priors \citep[e.g.][]{DeBlasi2015}, and normalized random measures with independent increments \citep{Regazzini03,JLP09}.

To provide another concrete example, let us consider the case where $\bnu$ is the two-parameter Pitman-Yor process \citep{PY92,PPY92} with parameters $\alpha \in [0,1)$ and $\beta > -\alpha$. For this species sampling process the EPPF is 
\begin{equation}\label{eq:eppf_py}
\pi_{\nu}(n_1,\ldots,n_k) = \frac{(\beta+\alpha)_{k-1 \uparrow \alpha}\prod_{j=1}^{k}(1-\alpha)_{n_j-1}}{(\beta +1)_{n-1}},
\end{equation}
where $n = \sum_{j=1}^k n_j$, $(x)_{m \uparrow a} = \prod_{i=0}^{m-1}(x+ia)$ and $(x)_m = (x)_{m \uparrow 1}$. This implies that the prediction rule for $\{\v_i\mid \bnu \iid \bnu;\, i \geq 1\}$, becomes
\begin{equation}\label{eq:pred_py}
\Prob[\v_{n+1}\in \cdot \mid \v_1,\ldots\v_n] = \sum_{j=1}^{\k} \frac{\n_j-\alpha}{n+\beta}\delta_{\v^*_j}+\frac{\beta+\k\alpha}{n+\beta}\nu_0,
\end{equation}
where $\v^*_1,\ldots\v^*_{\k}$ are the distinct values in $\{\v_1,\ldots,\v_n\}$, and $\n_{j} = |\{i\leq n:\v_i = \v^{*}_j\}|$, for every $n \geq 1$. As to the tie probability of $\bnu$ we get
\begin{equation}\label{eq:rho_py}
\rho_{\nu} = \pi_{\nu}(2) = \frac{1-\alpha}{\beta + 1}.
\end{equation}
Inserting \eqref{eq:eppf_py} into \eqref{eq:Ev_d}, for the case where the base measure $\nu_0 = \Be(1,\theta)$, we obtain the finite dimensional distributions of the latent allocation variables
\begin{align*}
\Prob[\d_1 = d_1,\ldots,\d_n = d_n] = \sum_{\{A_1,\ldots,A_m\}}\frac{\theta^{m}(\beta+\alpha)_{m-1 \uparrow \alpha}}{(\beta +1)_{k-1}}\prod_{j=1}^m\frac{(1-\alpha)_{|A_j|-1}\,(\sum_{i\in A_j}r_i)!}{(\theta+\sum_{i \in A_j}t_{i})_{1+\sum_{i \in A_j}r_i}}.
\end{align*}
where $r_i = \sum_{l=1}^{n}\Ind_{\{d_l = i\}}$, $t_i = \sum_{l=1}^{n}\Ind_{\{d_l > i\}}$, and where the sum ranges over the set of all partitions of $\{1,\ldots,k\}$, for $k = \max\{d_1,\ldots,d_n\}$.

For the corresponding weights, $(\w_j)_{j \geq 1}$, substituting \eqref{eq:rho_py} into Theorem \ref{theo:weights_ord} yields
\[
\Prob[\w_{j} \geq \w_{j+1}] = \frac{1-\alpha +(\beta+\alpha)\Esp\l[\overrightarrow{\nu_0}(c(\v))\r]}{\beta + 1},
\]
for every $j \geq 1$, and where $\overrightarrow{\nu_0}$ and $c$ are as stated in the theorem. For the special case $\nu_0 = \Be(1,\theta)$, the probability in question simplifies to
\[
\Prob[\w_{j} \geq \w_{j+1}] = 1-\frac{\2F1(1,1;\theta+2,1/2)(\beta+\alpha)\theta}{2(\beta + 1)(\theta+1)},
\]
and if $\theta = 1$ we even get $\Prob[\w_{j} \geq \w_{j+1}] = [1-\alpha +(\beta+\alpha)\log(2)]/[\beta + 1]$. In general, by \eqref{eq:rho_py}, as $\alpha \to 1$ and $\beta \to \beta' \in [1,\infty)$ or $\beta \to \infty$ and $\alpha \to \alpha' \in [0,1)$ we get $\rho_{\nu} \to 0$. Alternatively, as $\alpha \to \alpha' \in [0,1)$ and $\beta \to -\alpha'$, the tie probability $\rho_{\nu} \to 1$. Thus, Corollary \ref{cor:exch_SB_limit} assures that by means of Pitman-Yor driven ESBs we can approximate (weakly in distribution) Dirichlet and Geometric process. This is not true for all choices of $\bnu$, in fact if $\bnu$ is a normalized inverse-Gaussian random measure, with total mass parameter $\beta > 0$, as proved by \cite{LMP05}, its tie probability is $\rho_{\nu} = 2^{-1}[1+\beta^2 e^{\beta}E_1(\beta)-\beta]$, where $E_1(\beta) = \int_{\beta}^{\infty} x^{-1}e^{-x}dx$ is the exponential integral. Using the inequality
\[
\frac{e^{-\beta}}{2}\log\l(1+\frac{2}{\beta}\r) < E_1(\beta) < e^{-\beta}\log\l(1+\frac{1}{\beta}\r),
\]
it can be shown as $\beta \to \infty$, $\rho_{\nu} \leq 0$, and as $\beta \to 0$, $\rho_{\nu} \to c \leq 1/2$. Thus Geometric processes can not be recovered as weak limits of ESBs with normalized inverse-Gaussian processes as the directing random measure of the length variables.

{There are also interesting choices of $\bnu$, outside  Bayesian non-parametric priors. For example, one might consider the species sampling process with finitely many atoms, $\bnu = \bm{\alpha}\sum_{j = 1}^{\kappa}\p_j\delta_{\v^*_j} +\l(1-\bm{\alpha}\r)\nu_0$, for some $\kappa \in \N$ and where $\p_j \geq 0$, $\sum_{j = 1}^{\kappa}\p_j = 1$ and $\bm{\alpha}$ is an independent random variable taking values in $[0,1]$.} Depending on the distribution of $(\p_j)_{j=1}^{\kappa}$ and $\bm{\alpha}$, the EPPF, $\pi_{\nu}$, could be relatively simple to derive, as well as the tie probability which can be computed through $\rho_{\nu} = \Esp\l[\bm{\alpha}^2\r]\sum_{j=1}^{\kappa}\Esp\l[\p_j^2\r]$. 

\section{Illustrations}\label{sec:Illust}

In Bayesian non-parametric statistics it is common to model data, $\Y = (\y_1,\ldots,\y_m)$, that features no repetitions, as if sampled from $\{\y_k\mid \x_k\} \sim G(\cdot\mid \x_k)$ independently for $k \geq 1$. Here $G$ denotes a diffuse probability kernel from the Polish space, $S$, where $\x_k$ takes values, into the  Polish space, $T$, where $\y_k$ takes values. We further assume that $G(\cdot\mid s)$ has a density for every $s \in S$ and that $(\x_1,\x_2,\ldots)$ is exchangeable and driven by a proper species sampling process $\bmu = \sum_{j \geq 1}\w_j\delta_{\bxi_j}$. In terms of the law of $\Y$, this is equivalent to model the sequence as if it was conditionally i.i.d. sampled from random mixture
\begin{equation}\label{eq:mixture}
\bPhi = \int G(\cdot\mid s)\bmu(ds) = \sum_{j \geq 1}\w_j\,G(\cdot\mid\bxi_j).
\end{equation}
Hereinafter we call $\bPhi$ an ESB mixture or a DSB mixture whenever $\bmu$ is an ESB or a DSB. In general, if $G(\cdot \mid s_n)$ converges weakly to $G(\cdot \mid s)$, as $s_n \to s$ in $S$, the mapping
\[
\sum_{j \geq 1}w_j\delta_{s_j} \mapsto \int G(\cdot\mid s)d\mu(ds) = \sum_{j \geq 1}w_j G(\cdot \mid s_j) 
\]
is continuous with respect to the weak topology (Corollary \ref{lem:dL_cont_map_1} in the appendix). This means that analogous convergence results to those in Theorem \ref{theo:exch_SB_limit_0} and Corollaries \ref{cor:exch_SB_limit} and \ref{cor:limit_DSB} hold for ESB and DSB mixtures. In this context, $\K_n = |\bPi(\x_{1:n})|$ represents the number of mixture components that are significant in the sample, that is the number of elements in $\{G(\cdot\mid\bxi_j)\}_{j \geq 1}$ for which there exist $k \in \{1,\ldots,m\}$ such that $\y_k$ was ultimately sampled from $G(\cdot\mid\bxi_j)$. Details of an MCMC algorithm for density estimation by means of ESB mixtures are provided in Section \ref{sec:MaP_EaP} of the appendix.

We designed two experiments, one consists in estimating the density of univariate data by means of the usual expected a posterior (EAP) estimator. Here we will adjust six DSB mixtures, where the parameter $\beta = (1-\rho_{\nu})/\rho_{\nu}$ is fixed to distinct values. The main objective of this test is to analyze the posterior impact of Corollary \ref{cor:limit_DSB}, meaning that we expect to observe that when $\rho_{\nu} = 1/(\beta+1)$ is small, the posterior estimators behave similar to those of a Dirichlet prior, and when $\rho_{\nu}$ is close to one, the results resemble those of a Geometric prior. For the second experiment we work with bivariate data, and focus on estimating its density by means of the EAP and the maximum a posterior (MAP) estimators, we will also estimate the clusters of the data points using the MAP estimator (see Section \ref{sec:MaP_EaP} of the appendix, for details). In this  second study, we will assign a prior distribution to the underlying tie probability $\rho_{\nu}$, allowing the model to determine which values of $\rho_{\nu}$ suit better the dataset, given that the rest of the parameters and hyper-parameters are fixed.

\subsection{Results for DSB mixtures with fixed tie probability}

For this experiment we simulated observations $(\y_k)_{k=1}^{200}$ from a mixture of seven Normal distributions, and estimate the density of the data through six distinct DSB mixtures with parameters $(\beta,\theta,\mu_0)$. For each of the six mixtures we consider a Gaussian kernel with random location and scale parameters, i.e $G(\y|\bxi_j) = \msf{N}(\y|\m_j,\bm{\tau}_j^{-1})$ and $\mu_0(\bxi_j) = \msf{N}(\m_j|\mu,(\lambda\bm{\tau}_j)^{-1})\Ga(\bm{\tau}_j|a,b)$, where $a = b = 0.5$, $\lambda = 1/100$ and $\mu =n^{-1}\sum_{k=1}^n\y_k$. Each of the six DSB mixtures features a distinct of $\beta = (1-\rho_{\nu})/\rho_{\nu}$ and all share $\theta = 1$. 

\begin{figure}[H]
\centering
\includegraphics[scale=0.48]{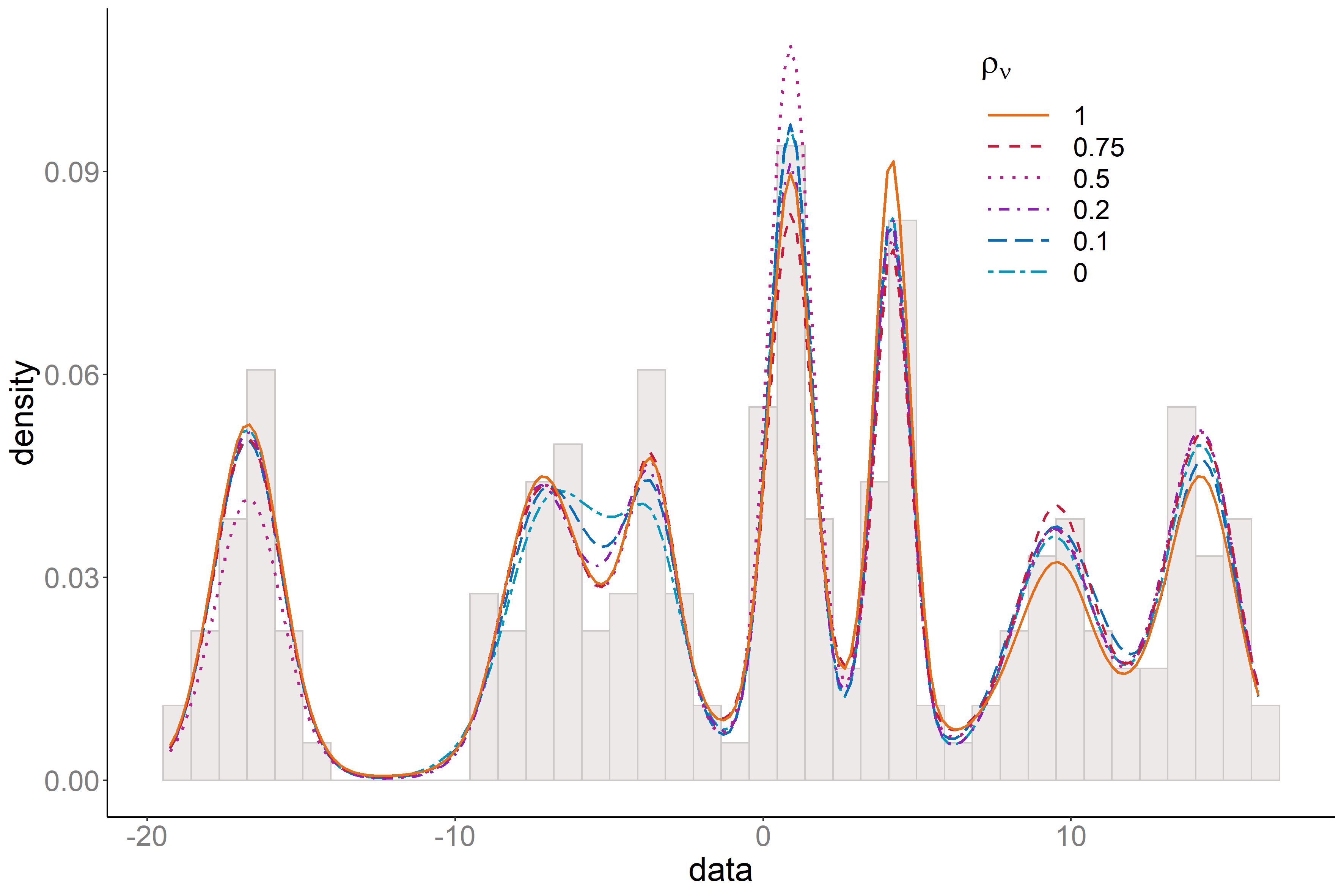} 
\caption{Estimated densities, taking into account $4000$ iterations of the Gibbs sampler, skipping $4$ iterations, after a burn-in period of $7000$, for six DSB mixtures with parameter $\beta \in \{0,1/3,1,4,9,\infty\}$ ($\rho_{\nu} \in \{0.75, 0.5, 0.2, 0.1\}$, respectively).\label{Fig7-D}}
\end{figure}

In Figure \ref{Fig7-D} we can observe that all models do a good job estimating the density. The Dirichlet process ($\rho_{\nu} = 0$) and the DSB with $\rho_{\nu} = 0.1$ struggle more than the other models to differentiate the second and third modes from left to right, this can be due to the initial election of the parameter $\theta$ and the fact that a priori the DSB with $\rho_{\nu} = 0.1$ behaves similarly to a Dirichlet process. In Figure \ref{Fig7-D} we can also observe that it is at the high density areas that the estimated density from model to model varies slightly. 

\begin{figure}[H]
\centering
\includegraphics[scale=0.48]{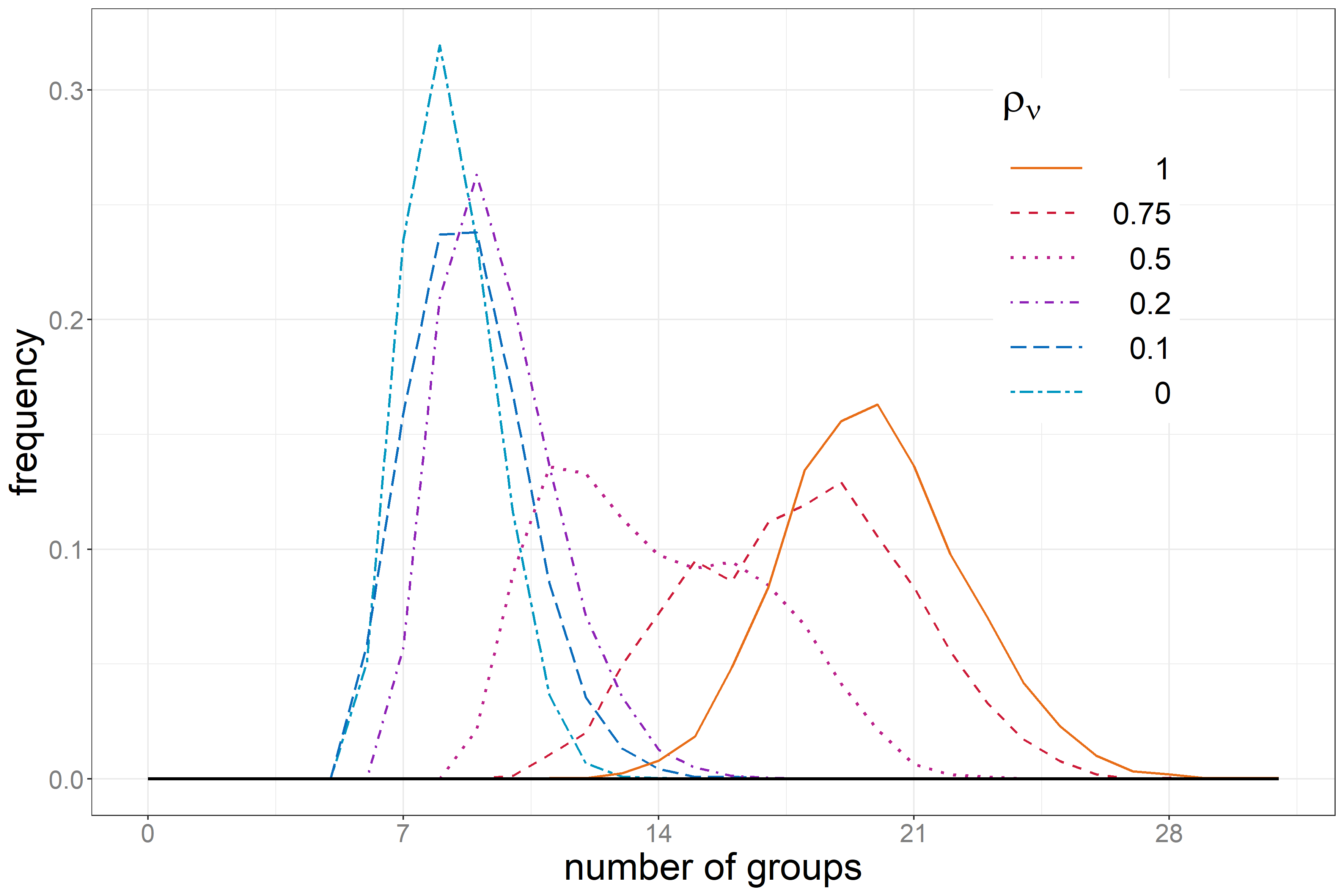} 
\caption{Frequency polygons corresponding to the posterior distribution of $\K_{200}$, for the six DSB mixtures with parameter $\beta = 0,1/3,1,4,9,\infty$ ($\rho_{\nu} = 0.75, 0.5, 0.2, 0.1$, respectively)\label{Fig7-Km}}
\end{figure}

Figure \ref{Fig7-Km} illustrates the posterior distribution of $\K_n$, with $n = 200$, for each of the six DSB mixtures implemented. Here we see that the Dirichlet process and the DSBs with a smaller value of $\rho_{\nu}$, give high probability to numbers close to seven, which is the true number of components of the mixture from which the data was sampled. In contrast, as the parameter $\rho_{\nu}$ approaches one, we observe that the models tend to give higher probability to larger values through the posterior distribution of $\K_n$, this means that these models use more components to provide the estimations illustrated in Figure \ref{Fig7-D}. Indeed, since  Geometric weights decrease at a constant rate, in order to estimate the size and shape of some components, the model is forced to overlap many small components. If we were interested in clustering the data points, this can be a disadvantage of DSB mixtures with a large values of $\rho_{\nu}$, as it is likely that the number of clusters will be overestimated. However, if we are only interested in density estimation this feature actually makes the models that behave similar to Geometric processes more likely to capture subtle changes in the histogram of the data set. Overall we see that the results are consistent with Corollaries \ref{cor:exch_SB_limit} and \ref{cor:limit_DSB} in the sense that as $\rho_{\nu} \to 0$, the results provided by the DSB mixtures are similar to those given by a Dirichlet prior, and when $\rho_{\nu} \to 1$, they are closer to those provided by a Geometric prior.

\subsection{Results for DSB mixtures with random tie probability}\label{sec:ill_rand}

For this experiment, we simulated $510$ data points from a paw-shaped mixture of seven Normal distributions. Here we adjust a Dirichlet mixture with total mass parameter $\theta$, a Geometric mixture with length variable $\v \sim \Be(1,\theta)$ and a DSB with parameters $(\beta,\theta,\mu_0)$, where $\beta = (1-\rho_{\nu})/\rho_{\nu}$ and $\rho_{\nu}\sim \msf{Unif}(0,1)$. For all mixtures we assume a bivariate Gaussian kernel, i.e. $G(\y|\bxi_j) = \msf{N_2}(\y|\m_j,\bm{\Sigma}_j)$, and a Normal-inverse-Wishart prior for $\bxi_j = (\m_j,\bm{\Sigma}_j)$, so that $\mu_0(\bxi_j) = \msf{N_2}(\m_j\mid \mu,\lambda^{-1}\bm{\Sigma}_j)\msf{W}^{-1}(\bm{\Sigma}_j\mid \mathrm{P},\nu)$. In all cases the hyper-parameters were fixed to $\theta = 1$, $\mu = n^{-1}\sum_{k=1}^{n}\y_k$, $\lambda = 1/100$, $\nu = 2$ and $\mathrm{P}$ equal to the identity matrix.

\begin{figure}[H]
\centering
\includegraphics[scale=0.48]{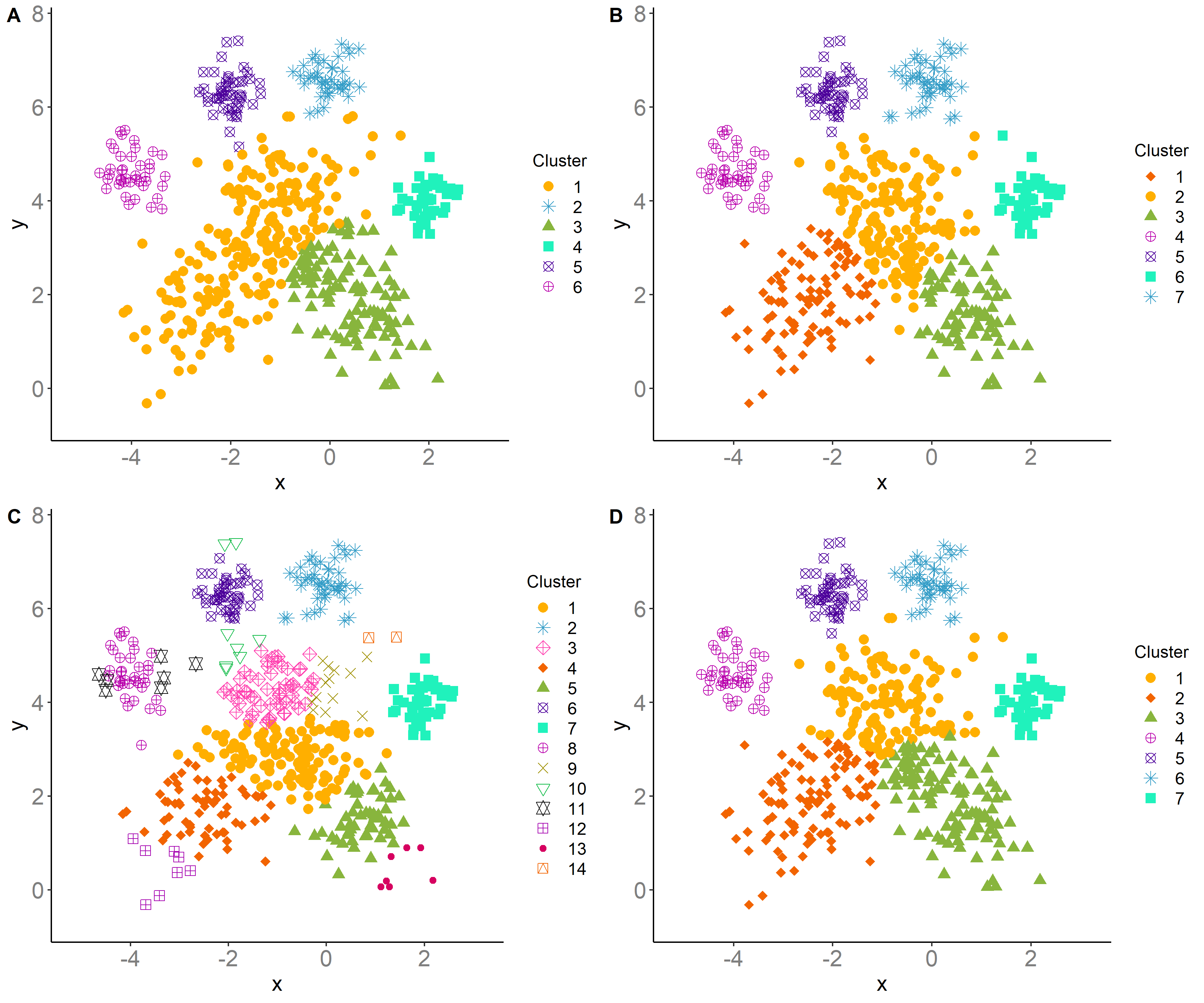} 
\caption{Estimated clusters using the MAP estimator, taking into account $8000$ iterations of the Gibbs sampler after a burn-in period of $2000$ iterations, according to the Dirichlet prior $(\msf{A})$ a DSB prior $(\msf{B})$, a Geometric prior $(\msf{C})$ and the true model $(\msf{D})$. \label{FigPaw3-cl}}
\end{figure}

In Figure \ref{FigPaw3-cl} we see that while the DSB recovers exactly seven clusters as there are according to the true model, the Dirichlet process merges two clusters of the true model, and the Geometric model overestimates the number of clusters. This figures illustrates that the generality inherent DSBs allows to balance features of Dirichlet and Geometric processes. This is also reflected through the MAP estimators of the density, presented in Figure \ref{FigPaw3-2d-MaP} (and Figure \ref{FigPaw3-3d-MaP} in the appendix). 

\begin{figure}[H]
\centering
\includegraphics[scale=0.48]{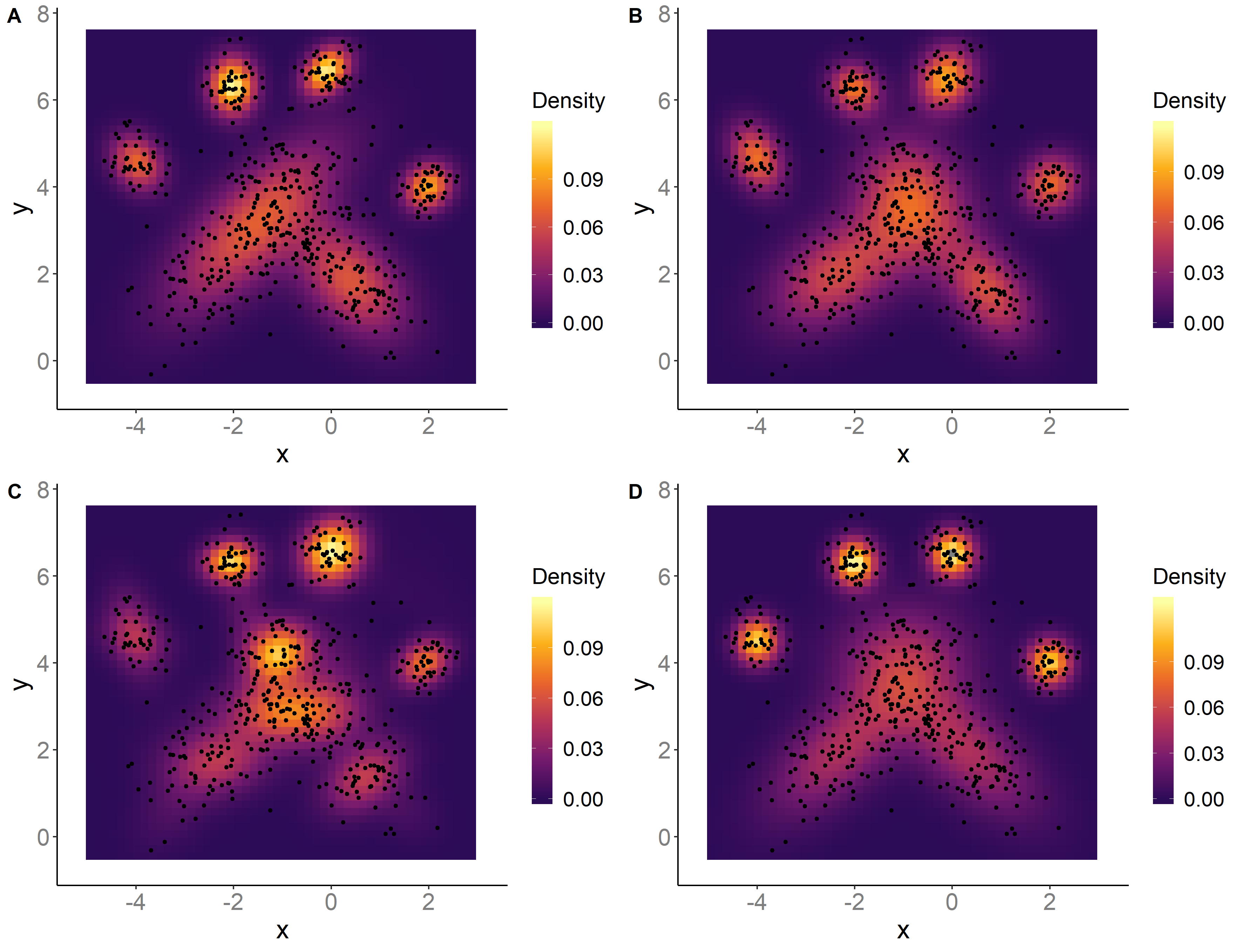} 
\caption{Estimated densities of the data points using the MAP, taking into account $8000$ iterations of the Gibbs sampler after a burn-in period of $2000$ iterations, according to the Dirichlet prior $(\msf{A})$ a DSB prior $(\msf{B})$  and a Geometric prior $(\msf{C})$. $\msf{D}$ shows the true density. \label{FigPaw3-2d-MaP}}
\end{figure}


\begin{figure}[H]
\centering
\includegraphics[scale=0.48]{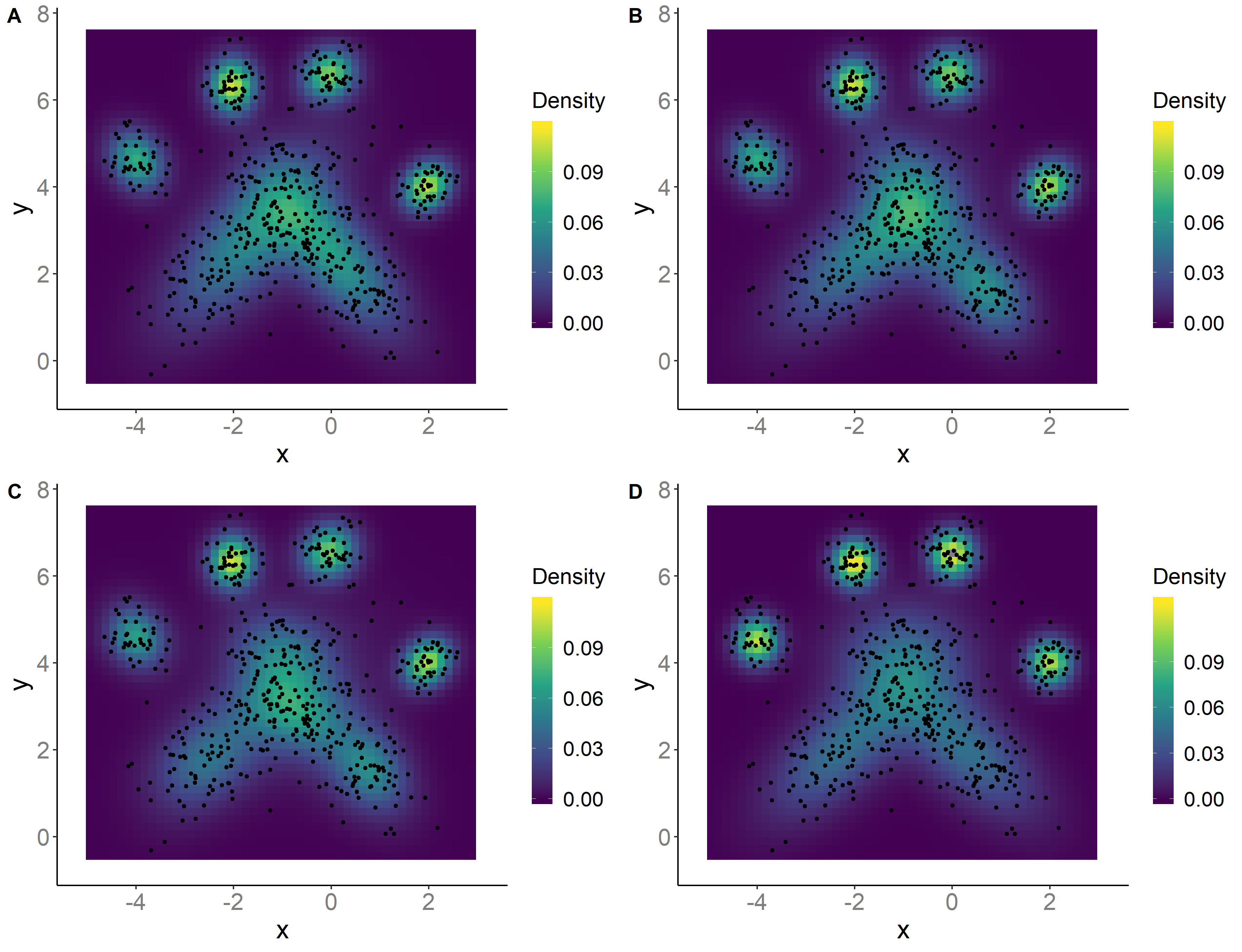} 
\caption{Estimated densities of the data points using the EAP, taking into account each fourth iteration among $8000$ iterations of the Gibbs sampler after a burn-in period of $2000$, according to the Dirichlet prior $(\msf{A})$ a DSB prior $(\msf{B})$ and a Geometric prior $(\msf{C})$. $\msf{D}$ shows the true density from which the data points were i.i.d. sampled. \label{FigPaw3-2d-EaP}}
\end{figure}

Figure \ref{FigPaw3-2d-EaP} (and Figure \ref{FigPaw3-3d-EaP} in the appendix) exhibits the EAP estimators of the density provided by each model, in comparison to the MAP estimators we see that these ones are much smoother. Here we appreciate all models estimate the density quite nicely, this is explained by the fact that Gaussian mixtures are in general very flexible models and that all of the species sampling priors considered here have full support. 

\begin{figure}[H]
\centering
\includegraphics[scale=0.48]{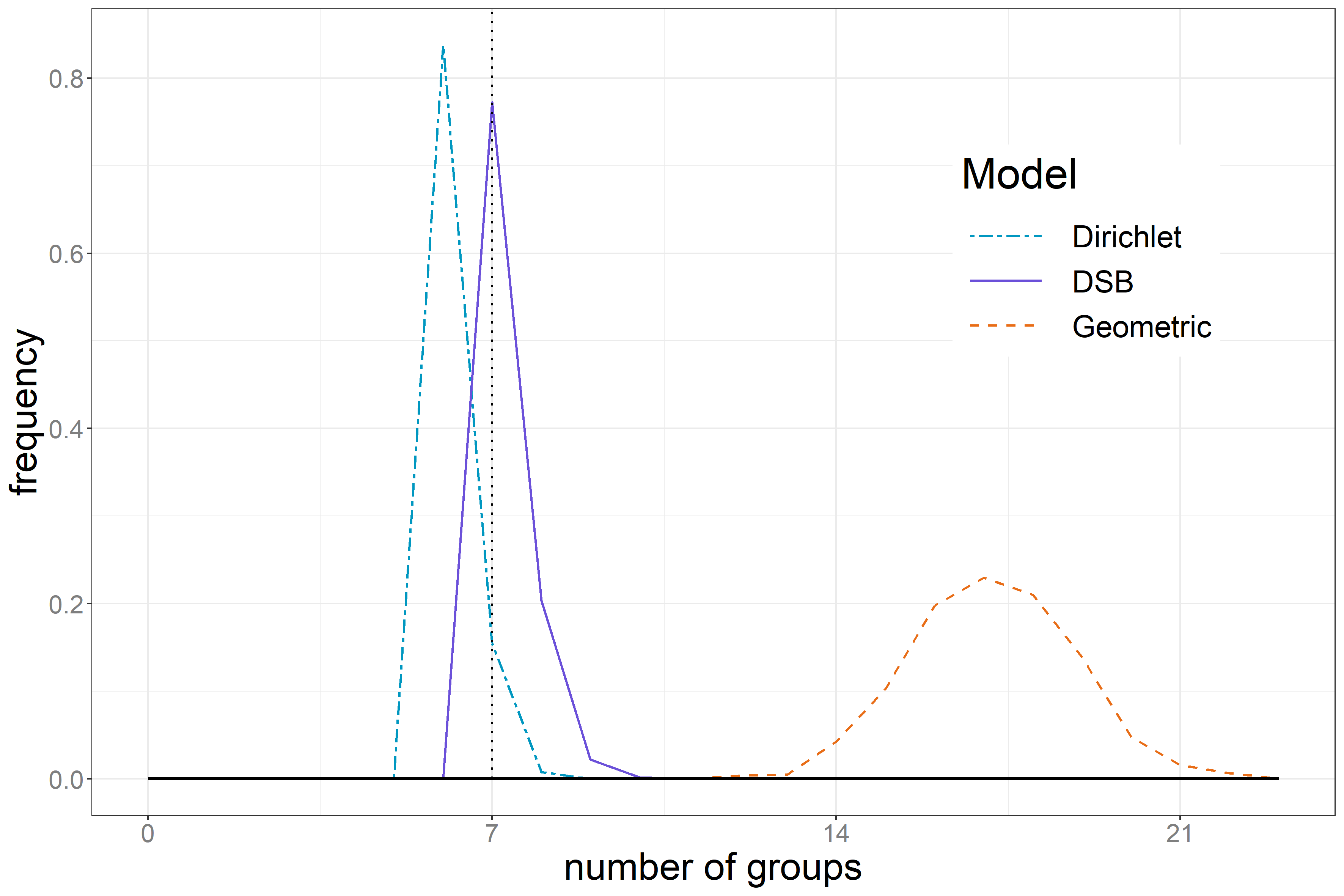} 
\caption{Frequency polygons corresponding to the posterior distribution of $\K_{510}$, for the Dirichlet, DSB, and Geometric mixtures. The dotted line indicates the true number of mixture components. \label{FigPaw3-Kn}}
\end{figure}

In Figure \ref{FigPaw3-Kn} we observe the posterior distribution of $\K_n$, (for $n = 510$) for each of the models. Here we see that through the posterior mode of $\K_n$ the DSB recovers the true number of mixtures components. The Dirichlet model also assigns a probability larger than zero to the true number of components. Despite this, the posterior mode of $\K_n$ for the Dirichlet process is one unit smaller than the true number of components. As to the Geometric process, the posterior distribution of $\K_n$ concentrates in significantly larger values than the real number of mixture components.

The last figure we will analyze is Figure \ref{FigPaw3-tun}, which presents the posterior distribution of $\rho_{\nu}$ for the DSB mixture random random tuning parameter. Here we see that the posterior mode is close to $0.25$. In particular, this suggests that, for the fixed values of the hyper-parameters we considered, a model more similar to the Dirchlet mixture is preferred over one that approximates a Geometric mixture.


\begin{figure}[H]
\centering
\includegraphics[scale=0.48]{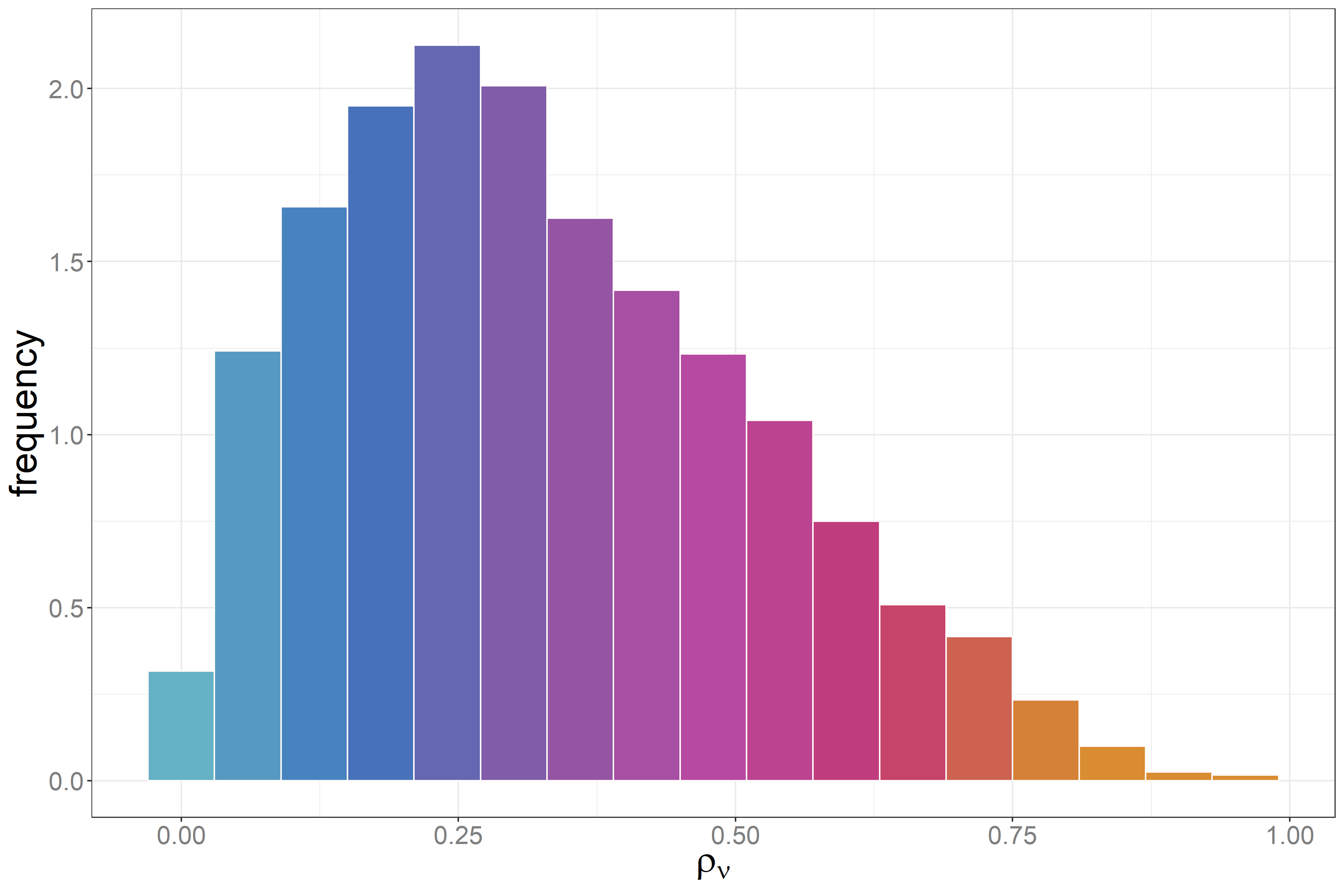} 
\caption{Posterior distributions of the tie probability $\rho_{\nu}$. \label{FigPaw3-tun}}
\end{figure}

\section{Final comments}

{
While we did not addressed in detail other examples of ESBs outside the case where the length variables are species sampling driven, it is important to highlight that various well known Bayesian non-parametric priors fall into the general framework outlined in Section \ref{sec:SBE}. Indeed, if the length variables are conditionally i.i.d. given $\bnu$, where $\bnu$ denotes a parametric distribution with random parameters, for instance $\bnu = \Be(\bm{\alpha},\bm{\theta})$ and $(\bm{\alpha},\bm{\theta}) \sim \bbp(\bm{\alpha},\bm{\theta})$, then the length variables are exchangeable and the species sampling process, $\bmu$, they define is an ESB. Such is the case of a Dirichlet process with a prior on the total mass parameter, model that has been widely exploited \citep[e.g.][]{EW95}. 

Although the present work was mainly motivated by Bayesian non-parametric theory, stick-breaking processes continue to be widely used in other probabilistic frameworks, and the results provided here can easily emigrate to such contexts.
}


\section*{Acknowledgements}
The first author was supported by a CONACyT PhD scholarship. Both authors gratefully acknowledge the support of  PAPIIT Grant IG100221.

\setcounter{figure}{0}
\renewcommand{\thefigure}{A\arabic{figure}}
\setcounter{equation}{0}
\renewcommand{\theequation}{A\arabic{equation}}

\appendix

\section{Stick-breaking decomposition}\label{sec:SB-decomp}

\begin{theo}\label{theo:sb}
Let $\W = (\w_j)_{j \geq 1}$ be a sequence such that $0 \leq \w_j \leq 1$, for every $j \geq 1$, and $\sum_{j\geq 1}\w_j \leq 1$ almost surely. Then, there exist a sequence $\V = (\v_i)_{i \geq 1}$ taking values in $[0,1]$ such that \eqref{eq:sb} holds. 
\end{theo}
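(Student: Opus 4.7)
The plan is to invert the stick-breaking recursion explicitly. Define $\s_0 = 0$ and $\s_j = \sum_{i=1}^j \w_i$ for $j \geq 1$. Motivated by the identity $\prod_{i=1}^{j-1}(1-\v_i) = 1-\s_{j-1}$ (which the recursion \eqref{eq:sb} forces whenever it holds), I would set
\[
\v_j = \begin{cases} \dfrac{\w_j}{1-\s_{j-1}}, & \s_{j-1} < 1,\\[4pt] 0, & \s_{j-1} = 1.\end{cases}
\]
The first step is simply to observe that this is well defined almost surely because $\s_{j-1} \leq 1$ by hypothesis, and that each $\v_j$ takes values in $[0,1]$: when $\s_{j-1}<1$, the numerator satisfies $0 \leq \w_j \leq 1-\s_{j-1}$ since $\s_j \leq 1$, so the quotient lies in $[0,1]$.

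Next, I would verify \eqref{eq:sb} by induction on $j$. The base case $\w_1 = \v_1$ is immediate from the definition (with $\s_0=0$). For the inductive step, assume $\prod_{i=1}^{j-1}(1-\v_i) = 1-\s_{j-1}$. If $\s_{j-1}<1$, then by construction $\v_j(1-\s_{j-1}) = \w_j$, giving $\w_j = \v_j\prod_{i=1}^{j-1}(1-\v_i)$, and moreover
\[
\prod_{i=1}^{j}(1-\v_i) = (1-\v_j)(1-\s_{j-1}) = 1-\s_{j-1}-\w_j = 1-\s_j,
\]
closing the induction. If instead $\s_{j-1} = 1$, then $\w_j = 0$ (since $\s_j \leq 1$) and the inductive hypothesis forces $\prod_{i=1}^{j-1}(1-\v_i)=0$, so both sides of \eqref{eq:sb} vanish, and $\prod_{i=1}^{j}(1-\v_i)=0=1-\s_j$ as well.

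The only subtlety—and the only place where one might worry—is the degenerate event $\{\s_{j-1}=1\}$, on which the inversion formula has a $0/0$ ambiguity. The remedy above (declaring $\v_j=0$ there) is harmless because on this event the entire tail $\w_j,\w_{j+1},\ldots$ is forced to be zero and the corresponding factors $\prod_{i<j}(1-\v_i)$ are already zero, so \eqref{eq:sb} is satisfied trivially for every $k\geq j$. I do not anticipate any deeper obstacle: the statement is essentially an algebraic inversion, and the measurability of $\V$ is automatic since each $\v_j$ is a Borel function of $\w_1,\ldots,\w_j$.
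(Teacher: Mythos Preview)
Your proof is correct and follows essentially the same approach as the paper's: both define $\v_j = \w_j/(1-\s_{j-1})$ on $\{\s_{j-1}<1\}$ and $\v_j=0$ otherwise, check that $\v_j\in[0,1]$ via $\w_j\le 1-\s_{j-1}$, and then verify \eqref{eq:sb}. The only cosmetic difference is that the paper writes the verification as a direct telescoping product $\prod_{i=1}^{k-1}\frac{1-\s_i}{1-\s_{i-1}}$, whereas you phrase it as an induction on the invariant $\prod_{i=1}^{j-1}(1-\v_i)=1-\s_{j-1}$; these are the same computation.
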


\begin{proof}
Let $(\Omega,\cl{F},\Prob)$ be the probability space over which $(\w_j)_{j \geq 1}$ is defined. Given that we are interested in an almost surely decomposition, we may assume without loss of generality that $\sum_{j \geq 1}\w_j \leq 1$ and $0 \leq \w_j \leq 1$, for every $j \geq 1$, hold over $\Omega$. Fix $\v_1 = \w_1$, and for $k \geq 2$, define the event $E_k = \l\{\omega \in \Omega: \sum_{j=1}^{k-1}\w_j(\omega) < 1\r\}$ and set
\[
\v_k = \frac{\w_k}{1-\sum_{j=1}^{k-1}\w_j}\Ind_{E_k}.
\]
Evidently $\v_k$ is measurable as $\w_1,...,\w_k$ are. Also, since $\sum_{j \geq 1}\w_j \leq 1$, we have that $\w_k(\omega) \leq 1-\sum_{j=1}^{k-1}\w_j(\omega)$, for every $\omega \in E_k$, which yields $0 \leq \v_k \leq 1$. This shows $(\v_i)_{i \geq 1}$ is a sequence of $[0,1]$-valued random variables. Now, for $k < k'$ we have that $E_{k'} \subseteq E_{k}$. Hence, for every $k \geq 2$,
\[
\w_k(\omega) = \frac{\w_k(\omega)}{1-\sum_{j=1}^{k-1}\w_j(\omega)}\prod_{j=1}^{k-1}\l(\frac{1-\sum_{i=1}^{j}\w_i(\omega)}{1-\sum_{i=1}^{j-1}\w_i(\omega)}\r) = \v_k(\omega)\prod_{j=1}^{k-1}(1-\v_j(\omega)),
\]
for all $\omega \in E_k$, and since $\sum_{j \geq 1}\w_j \leq 1$, $\w_k(\omega) = 0 = \v_k(\omega)$, for $\omega \in (E_k)^{c}$. This show that for every $k \geq 1$, $\w_k = \v_k\prod_{j=1}^{k-1}(1-\v_j)$, as desired.
\end{proof}

\section{Weak convergence of probability measures}\label{sec:weak_conv}

Inhere we mention some topological details of measure spaces.  For a Polish space $S$, with Borel $\sigma$-algebra $\B_S$, we denote by $\cl{P}(S)$ to the space of all probability measures over $(S,\B_S)$. A well-known metric on $\cl{P}(S)$ is the L\'evy-Prokhorov metric given by
\begin{equation}\label{eq:weak_metric}
d_L(\mu,\mu') = \inf\{\varepsilon > 0: \mu(A) \leq \mu'\l(A^{\varepsilon}\r) + \varepsilon, \mu'(A) \leq \mu\l(A^{\varepsilon}\r)+ \varepsilon, \A A \in \B(S)\},
\end{equation}
for any $\mu,\mu' \in \cl{P}(S)$, and where $A^{\varepsilon} = \{s \in S: d(s,A) < \varepsilon\}$, $d(s,A) = \inf\{d(a,s): a \in A\}$ and $d$ is some complete metric on $S$. For probability measures $\mu,\mu^{(1)},\mu^{(2)},\ldots$ it is said that $\mu^{(n)}$ converges weakly to $\mu$, denoted by $\mu^{(n)} \wto \mu$, whenever $\mu^{(n)}(f) = \int_S f d\mu^{(n)}\to \int_S f d\mu = \mu(f)$ for every continuous bounded function $f:S \to \R$. The Portmanteau theorem states that this condition is equivalent to $d_L\l(\mu^{(n)},\mu\r) \to 0$, and to $\mu^{(n)}(A) \to \mu(A)$, for every Borel set such that $\mu(\partial A) = 0$, where $\partial A$ denotes the boundary of $A$. $\cl{P}(S)$, equipped with the topology of weak convergence, is Polish again. Its Borel $\sigma$-field, $\B_{\cl{P}(S)}$, can equivalently be defined as the $\sigma$-algebra generated by all the projection maps $\{\mu \mapsto \mu(B): B \in \B_S\}$. In this sense the random probability measures, $\l(\bmu^{(n)}\r)_{n \geq 1}$, are said to converge weakly, a.s. to $\bmu$,  whenever $\bmu^{(n)}(\omega) \wto \bmu(\omega)$ outside a $\Prob$-null set. Analogously, we say that $\l(\bmu^{(n)}\r)_{n \geq 1}$ converges weakly, in $\cl{L}_p$, in probability or in distribution, to $\bmu$, denoted by $\bmu^{(n)}\Lpwto \bmu$, $\bmu^{(n)} \Pwto \bmu$ and $\bmu^{(n)} \dwto \bmu$, respectively, whenever $\bmu^{(n)}(f) \to \bmu(f)$, in the corresponding mode of convergence, for every continuous bounded function $f:S \to \R$. Evidently, $\bmu^{(n)} \wto \bmu$ a.s. and $\bmu^{(n)} \Lpwto \bmu$ are both sufficient conditions for $\bmu^{(n)} \Pwto \bmu$, which in turn implies $\bmu^{(n)} \dwto \bmu$. The latter even is equivalent to $\bmu^{(n)} \dto \bmu$ \citep[for further details see][]{P67,B68,K17}. The following Lemmas will be needed  for the proofs of some of the main results.

\begin{lem}\label{lem:dL_cont_map0}
Let $\Delta_{\infty}$ denote the infinite dimensional simplex and consider some Polish space $S$. The mapping
\[
[(\mu_1,\mu_2,...),(w_1,w_2,\ldots)] \mapsto \sum_{j \geq 1}w_j\mu_j,
\]
from $\cl{P}(S)^{\infty} \times \Delta_{\infty}$ into $\cl{P}(S)$ is continuous with respect to the weak and product topologies.
\end{lem}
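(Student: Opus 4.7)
My plan is to prove sequential continuity, which is sufficient because both the domain $\cl{P}(S)^{\infty}\times\Delta_{\infty}$ (metrizable via a countable product of the L\'evy--Prokhorov metric and the Euclidean metric) and the codomain $\cl{P}(S)$ (metrizable via $d_L$) are metrizable. So I take sequences with $(\mu_j^{(n)})_{j\geq 1}\to (\mu_j)_{j\geq 1}$ in $\cl{P}(S)^{\infty}$ and $(w_j^{(n)})_{j\geq 1}\to(w_j)_{j\geq 1}$ in $\Delta_\infty$; by definition of the product topology, these convergences are coordinate-wise, i.e.\ $\mu_j^{(n)}\wto \mu_j$ and $w_j^{(n)}\to w_j$ for every $j\geq 1$. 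By the Portmanteau theorem, it will suffice to show that for every continuous bounded $f:S\to \R$,
\[
\sum_{j\geq 1}w_j^{(n)}\mu_j^{(n)}(f)\longrightarrow \sum_{j\geq 1}w_j\,\mu_j(f).
\]

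To this end, the key idea is to add and subtract $\sum_{j} w_j\,\mu_j^{(n)}(f)$ and split the difference as
\[
\sum_{j\geq 1}\l(w_j^{(n)}-w_j\r)\mu_j^{(n)}(f)\;+\;\sum_{j\geq 1}w_j\l(\mu_j^{(n)}(f)-\mu_j(f)\r),
\]
then bound each piece separately. The first sum is dominated in absolute value by $\|f\|_{\infty}\sum_{j\geq 1}|w_j^{(n)}-w_j|$. Here I would apply Scheff\'e's lemma on the counting measure on $\N$: since $(w_j^{(n)})$ and $(w_j)$ both lie in $\Delta_\infty$, one has $\sum_{j}w_j^{(n)}=\sum_{j}w_j=1$, and combined with the coordinate-wise convergence $w_j^{(n)}\to w_j$ this forces $\sum_{j\geq 1}|w_j^{(n)}-w_j|\to 0$.

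For the second sum, each individual summand satisfies $w_j\l(\mu_j^{(n)}(f)-\mu_j(f)\r)\to 0$ as $n\to\infty$ by the weak convergence $\mu_j^{(n)}\wto\mu_j$ and continuity-boundedness of $f$, while $\bigl|w_j(\mu_j^{(n)}(f)-\mu_j(f))\bigr|\leq 2\|f\|_{\infty}w_j$ with $\sum_{j}w_j=1<\infty$. The dominated convergence theorem applied to the counting measure on $\N$ then yields that the second sum also tends to $0$. The main obstacle throughout is controlling the infinite tail uniformly in $n$: without the simplex constraint $\sum_j w_j=1$, coordinate-wise convergence alone would permit mass to escape to infinity and neither Scheff\'e nor dominated convergence would apply. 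A purely alternative route would use a truncation argument, choosing $N$ so that $\sum_{j>N}w_j<\varepsilon$, exploiting that $\sum_{j>N}w_j^{(n)}=1-\sum_{j\leq N}w_j^{(n)}\to\sum_{j>N}w_j$ to bound the tails uniformly in $n$, and handling the finite head coordinate-wise; but the Scheff\'e route appears cleaner.
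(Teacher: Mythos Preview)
Your proof is correct and follows essentially the same idea as the paper's. The only difference is packaging: where you add and subtract $\sum_j w_j\mu_j^{(n)}(f)$ and then invoke Scheff\'e's lemma plus the ordinary dominated convergence theorem on the two pieces, the paper applies the generalized Lebesgue dominated convergence theorem in a single step to $w_j^{(n)}\mu_j^{(n)}(f)$ with varying dominants $\|f\|_\infty\,w_j^{(n)}$, using $\sum_j w_j^{(n)}=1=\sum_j w_j$; both arguments rest on the simplex constraint exactly as you point out.
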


\textbf{Proof:}
Let $w = (w_1,w_2,\ldots)$, $w^{(n)}= \l(w^{(n)}_1,w^{(n)}_2,\ldots\r)_{n \geq 1}$ be elements of $\Delta_{\infty}$, and  $\mu = (\mu_1,\mu_2,\ldots)$, $\mu^{(n)}=\l(\mu^{(n)}_1,\mu^{(n)}_2,\ldots\r)_{n \geq 1}$, be elements of $\cl{P}(S)^{\infty}$, such that $w_j^{(n)} \to w_j$ and  $\mu^{(n)}_j \wto \mu_j$, for every $j \geq 1$. Define $\nu^{(n)} = \sum_{j \geq 1}w^{(n)}_j\mu^{(n)}_j$ and $\nu = \sum_{j \geq 1}w_j\mu_j$. Fix a continuous and bounded function $f: S \to \R$. Then, for $j \geq 1$, $w^{(n)}_j\mu^{(n)}_j(f) \to w_j\mu_j(f)$. Since $f$ is bounded, there exist $M$ such that $|f|\leq M$, hence $|w^{(n)}_j\mu^{(n)}_j(f)| \leq w^{(n)}_j\mu^{(n)}_j(|f|)\leq w^{(n)}_jM$, for every $n \geq 1$, and $j \geq 1$. Evidently, $Mw^{(n)}_j \to Mw_j$, and $\sum_{j \geq 1}Mw^{(n)}_j = M = \sum_{j \geq 1}Mw_j$. Hence, by general Lebesgue dominated convergence theorem, we obtain
\[
\nu^{(n)}(f) = \sum_{j\geq 1}w^{(n)}_j\mu^{(n)}_j(f) \to  \sum_{j\geq 1}w_j\mu_j(f) = \nu(f)
\]
That is $\nu^{(n)} \wto \nu$.
\qed

\begin{lem}\label{lem:dL_cont_map}
Let $\Delta_{\infty}$ denote the infinite dimensional simplex and consider some Polish space $S$. The mapping
\[
[(s_1,s_2,...),(w_1,w_2,\ldots)] \mapsto \sum_{j \geq 1}w_j\delta_{s_j},
\]
from $S^{\infty} \times \Delta_{\infty}$ into $\cl{P}(S)$ is continuous with respect to the product and weak topologies.
\end{lem}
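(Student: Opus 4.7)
The plan is to reduce this to Lemma \ref{lem:dL_cont_map0} by composing with the map $s \mapsto \delta_s$. First I would observe that the mapping $\iota: S \to \cl{P}(S)$ defined by $\iota(s) = \delta_s$ is continuous with respect to the topology of $S$ and the weak topology on $\cl{P}(S)$. Indeed, if $s^{(n)} \to s$ in $S$ and $f:S \to \R$ is any continuous bounded function, then $\delta_{s^{(n)}}(f) = f(s^{(n)}) \to f(s) = \delta_s(f)$ by continuity of $f$, so $\delta_{s^{(n)}} \wto \delta_s$.

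Next I would promote this componentwise to the infinite product: the map $\iota^{\infty}: S^{\infty} \to \cl{P}(S)^{\infty}$ given by $(s_1, s_2, \ldots) \mapsto (\delta_{s_1}, \delta_{s_2}, \ldots)$ is continuous with respect to the two product topologies. This is a standard fact: a map into a product space is continuous if and only if each coordinate map is, and each coordinate here factors through the continuous map $\iota$ and a coordinate projection.

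Therefore, the map in question is the composition
\begin{equation*}
S^{\infty} \times \Delta_{\infty} \xrightarrow{\iota^{\infty} \times \mrm{id}} \cl{P}(S)^{\infty} \times \Delta_{\infty} \xrightarrow{\Sigma} \cl{P}(S),
\end{equation*}
where $\Sigma[(\mu_1, \mu_2, \ldots),(w_1, w_2, \ldots)] = \sum_{j \geq 1} w_j \mu_j$. The first factor is continuous by the previous paragraph and the identity, and $\Sigma$ is continuous by Lemma \ref{lem:dL_cont_map0}. Being a composition of continuous maps, the desired map is continuous, which concludes the proof. There is no real obstacle here: the only substantive ingredient is Lemma \ref{lem:dL_cont_map0}, and the reduction via $s \mapsto \delta_s$ is immediate from the definition of weak convergence.
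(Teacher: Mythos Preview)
Your proof is correct and follows essentially the same approach as the paper: reduce to Lemma~\ref{lem:dL_cont_map0} by checking that $s \mapsto \delta_s$ is continuous via $\delta_{s^{(n)}}(f)=f(s^{(n)})\to f(s)=\delta_s(f)$. The paper's proof is just a terser version of yours, omitting the explicit description of the composition through $\iota^{\infty}\times\mrm{id}$.
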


\textbf{Proof:}
By Lemma \ref{lem:dL_cont_map0} if suffices to check that the mapping $s \to \delta_s$ from $S$ into $\cl{P}(S)$ is continuous. So fix $s_{n} \to s$ in $S$ and fix a continuous and bounded function $f:S\to \R$. Then $\delta_{s_{n}}(f) = f(s_{n}) \to f(s) = \delta_s(f)$, that is $\delta_{s_{n}} \wto \delta_s$.
\qed

\begin{lem}\label{lem:dL_cont_map_1}
Consider two Polish spaces, $S$ and $T$, and let $G$ be a probability kernel from $S$ into $T$, such that for every $s_{n} \to s$ in $S$, $G(\cdot|s_{n}) \wto G(\cdot|s)$. The mapping
\[
\mu = \sum_{j \geq 1}w_j\delta_{s_j} \mapsto \int G(\cdot\mid s)d\mu(ds) = \sum_{j \geq 1}w_jG(\cdot \mid s_j),
\]
from $\cl{P}(S)$ into $\cl{P}(T)$ is continuous with respect to the product and weak topologies.
\end{lem}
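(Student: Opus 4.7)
The plan is to reduce the claim to Lemma \ref{lem:dL_cont_map0}, which has already handled the delicate part, namely the weak continuity of the map $[(\mu_j)_{j\ge1},(w_j)_{j\ge1}]\mapsto\sum_{j\ge1} w_j\mu_j$ from $\mathcal P(T)^\infty\times\Delta_\infty$ into $\mathcal P(T)$ via the generalized dominated convergence theorem. The only additional content of the present lemma is that precomposing with the kernel $G$, applied coordinatewise to the atoms, preserves continuity. Thus I would parse the statement as follows: the domain of the map is really $S^\infty\times\Delta_\infty$, with the product topology (coordinatewise convergence of atoms and of weights), exactly as in the preceding Lemma \ref{lem:dL_cont_map}; this is the natural parametrization since a purely atomic $\mu$ is not uniquely determined by the weak topology of $\mathcal P(S)$ alone.

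First I would fix a sequence $\bigl(s_j^{(n)},w_j^{(n)}\bigr)_{j\ge1}$ in $S^\infty\times\Delta_\infty$ converging, in the product topology, to $(s_j,w_j)_{j\ge1}$; this amounts to coordinatewise convergence $s_j^{(n)}\to s_j$ in $S$ and $w_j^{(n)}\to w_j$ in $[0,1]$ for every $j\ge1$. Second, I would invoke the continuity hypothesis on $G$: for each fixed $j$, the assumption gives $G(\cdot\mid s_j^{(n)})\wto G(\cdot\mid s_j)$ in $\mathcal P(T)$. Therefore the sequence $\bigl(G(\cdot\mid s_j^{(n)})\bigr)_{j\ge1}$ converges to $\bigl(G(\cdot\mid s_j)\bigr)_{j\ge1}$ in $\mathcal P(T)^\infty$ endowed with the product topology.

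Third, I would apply Lemma \ref{lem:dL_cont_map0} with the choices $\mu_j^{(n)}=G(\cdot\mid s_j^{(n)})$, $\mu_j=G(\cdot\mid s_j)$, and the weights $w_j^{(n)},w_j$ as above, to conclude
\[
\sum_{j\ge1} w_j^{(n)}\,G(\cdot\mid s_j^{(n)})\ \wto\ \sum_{j\ge1} w_j\,G(\cdot\mid s_j)
\]
in $\mathcal P(T)$, which is exactly the required continuity of the composed map.

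There is essentially no obstacle once the domain is interpreted correctly as $S^\infty\times\Delta_\infty$ with the product topology rather than as $\mathcal P(S)$ with the weak topology; indeed, the lemma is \emph{not} asserting weak-to-weak continuity of $\mu\mapsto\int G(\cdot\mid s)\,\mu(ds)$ on all of $\mathcal P(S)$ (which would be false in general, since the weak topology does not separate atoms from nearby diffuse approximations in a way compatible with a discontinuous kernel), but only the continuity of the restriction to atomic parametrizations, where coordinatewise convergence of the atoms triggers the hypothesis on $G$ and the rest follows from Lemma \ref{lem:dL_cont_map0}.
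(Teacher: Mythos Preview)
Your argument is correct, but it takes a different route from the paper and, in doing so, misreads the domain of the map. The paper's proof establishes genuine weak-to-weak continuity from $\cl{P}(S)$ to $\cl{P}(T)$: given $\mu^{(n)}\wto\mu$ in $\cl{P}(S)$ and a bounded continuous $f:T\to\R$, it defines $h(s)=\int f(t)\,G(dt\mid s)$, notes that the hypothesis on $G$ makes $h$ bounded and continuous on $S$, and then simply reads off $\int f\,d\Phi^{(n)}=\int h\,d\mu^{(n)}\to\int h\,d\mu=\int f\,d\Phi$. This one-line argument works for arbitrary $\mu\in\cl{P}(S)$, not just purely atomic ones, and it is what makes Corollary~\ref{cor:dL_cont_map2} an honest corollary by composition with Lemma~\ref{lem:dL_cont_map}.

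Your approach instead reparametrizes the domain as $S^\infty\times\Delta_\infty$, pushes the atoms through $G$ coordinatewise, and invokes Lemma~\ref{lem:dL_cont_map0}. That is valid, but it effectively proves Corollary~\ref{cor:dL_cont_map2} directly and bypasses the content of the present lemma as the paper states it. More importantly, your parenthetical claim that weak-to-weak continuity of $\mu\mapsto\int G(\cdot\mid s)\,\mu(ds)$ ``would be false in general'' is incorrect under the standing hypothesis: when $s\mapsto G(\cdot\mid s)$ is weakly continuous (a Feller-type condition), the mixture map is weakly continuous on all of $\cl{P}(S)$, precisely by the $h$-trick the paper uses. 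Your worry about a ``discontinuous kernel'' does not apply here.
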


\textbf{Proof:} 
Consider some discrete probability measures $\l(\mu^{(n)} = \sum_{j \geq 1}w^{(n)}_j\delta_{s^{(n)}_j}\r)_{n \geq 1}$ over $(S,\B_S)$, such that $\mu^{(n)} \wto \sum_{j \geq 1}w_j\delta_{s_j} = \mu$, and set $\Phi^{(n)} = \sum_{j \geq 1}w^{(n)}_jG\l(\cdot\mi s^{(n)}_j\r)$ and $\Phi = \sum_{j \geq 1}w_jG\l(\cdot\mid s_j\r)$. Let $f:T \to \R$ be a continuous and bounded function and define the function $h:S \to \R$, by
\[
h(s) = \int f(t) G(dt\mid s).
\]
Evidently $h$ is bounded because $f$ is bounded and $G(\cdot|s)$ is a probability measure. Furthermore, as $G(\cdot|s_{n}) \wto G(\cdot|s)$, for every $s_{n} \to s$ in $S$, $h$ is also continuous. Thus,
\[
\int f \, d\Phi^{(n)} = \sum_{j \geq 1} w^{(n)}_j h\l(s^{(n)}_j\r)= \int h\, d\mu^{(n)} \to \int h \,d\mu = \sum_{j \geq 1} w_j h\l(s_j\r) = \int f \, d\Phi.
\]
That is $\Phi^{(n)} \wto \Phi$.
\qed

\begin{cor}\label{cor:dL_cont_map2}
Let $\Delta_{\infty}$ denote the infinite dimensional simplex. Consider a couple of Polish spaces, $S$ and $T$ and let $G$ be a probability kernel from $S$ into $T$, such that for every $s_{n} \to s$ in $S$, $G(\cdot|s_{n}) \wto G(\cdot|s)$. The mapping
\[
[(s_1,s_2,...),(w_1,w_2,\ldots)] \mapsto \sum_{j \geq 1}w_jG(\cdot|s_j),
\]
from $S^{\infty} \times \Delta_{\infty}$ into $\cl{P}(T)$ is continuous with respect to the weak topology.
\end{cor}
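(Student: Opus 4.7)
The plan is to observe that the mapping in question factors as a composition of two maps that have already been shown to be continuous, namely those of Lemmas \ref{lem:dL_cont_map} and \ref{lem:dL_cont_map_1}. Since the composition of continuous maps between topological spaces is continuous, the result will follow immediately.

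More precisely, I would define two intermediate maps. First, let $\Psi_1 : S^{\infty} \times \Delta_{\infty} \to \cl{P}(S)$ be given by
\[
\Psi_1[(s_1,s_2,\ldots),(w_1,w_2,\ldots)] = \sum_{j \geq 1} w_j \delta_{s_j},
\]
which is continuous by Lemma \ref{lem:dL_cont_map}. Second, let $\Psi_2 : \cl{P}(S) \to \cl{P}(T)$ be given by
\[
\Psi_2(\mu) = \int G(\cdot \mid s)\,d\mu(s),
\]
which, under the hypothesis that $G(\cdot \mid s_n) \wto G(\cdot \mid s)$ whenever $s_n \to s$ in $S$, is continuous by Lemma \ref{lem:dL_cont_map_1}. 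The mapping in the statement of the corollary is precisely $\Psi_2 \circ \Psi_1$, since
\[
\Psi_2\bigl(\Psi_1[(s_j)_{j \geq 1},(w_j)_{j \geq 1}]\bigr) = \int G(\cdot \mid s) \sum_{j \geq 1} w_j \delta_{s_j}(ds) = \sum_{j \geq 1} w_j G(\cdot \mid s_j).
\]

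There is essentially no obstacle here, since all the technical work has already been done in Lemmas \ref{lem:dL_cont_map} and \ref{lem:dL_cont_map_1}. The only minor point worth flagging in the write-up is that Lemma \ref{lem:dL_cont_map_1} is stated for discrete probability measures of the form $\sum_{j \geq 1} w_j \delta_{s_j}$, but this is exactly the form produced by $\Psi_1$, so the hypothesis of that lemma is automatically met on the image of $\Psi_1$, and the composition argument goes through without modification.
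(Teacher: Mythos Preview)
Your proposal is correct and matches the paper's own proof, which simply states that the result is straightforward from Lemmas \ref{lem:dL_cont_map} and \ref{lem:dL_cont_map_1}. Your explicit factorization $\Psi_2 \circ \Psi_1$ merely spells out what the paper leaves implicit, and your remark that the image of $\Psi_1$ consists of discrete measures (so that the hypothesis of Lemma \ref{lem:dL_cont_map_1} is met) is a helpful clarification.
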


\textbf{Proof:} This is straightforward from Lemmas \ref{lem:dL_cont_map} and \ref{lem:dL_cont_map_1}.

\section{Exchangeable sequences driven by species sampling processes}\label{sec:exch_ssp_supp}

This section provides an overview of exchangeable sequences driven by species sampling processes. The results presented here are fundamental for the proof of our main results.

\begin{theo}\label{theo:Rep_SSP}
Let $(\x_i)_{i \geq 1}$ be an random sequence, taking values in a Polish space $(S,\B(S))$, and for $n \geq 1$, define $\bPi(\x_{1:n})$ as the random partition of $[n]$ generated by the random equivalence relation $i \bm{\sim} j$ if and only if $\x_i = \x_j$. Let $\mu_0$ be a diffuse probability measure over $(S,\B(S))$ and let $\pi$ be an EPPF. The following statements are equivalent.
\begin{itemize}
\item[\emph{I.}]  $(\x_i)_{i \geq 1}$ is exchangeable and directed by a species sampling process $\bmu$ as in \eqref{eq:ssp}, with base measure $\mu_0$, and whose size-biased pseudo-permuted weights $(\tw_j)_{j \geq 1}$ satisfy \[\pi(n_1,\ldots,n_k) = \Esp\l[\prod_{j=1}^k\tw_j^{n_j-1}\prod_{j=1}^{k-1}\l(1-\sum_{i=1}^j \tw_j\r)\r].\]
\item[\emph{II.}] $\x_1 \sim \mu_0$, and for every $n \geq 1$,
\[
\Prob[\x_{n+1} \in \cdot\mid \x_1,\ldots,\x_{n}] = \sum_{j=1}^{\K_n}\frac{\pi\l(\n^{(j)}\r)}{\pi(\n)}\delta_{\x^*_j} + \frac{\pi\l(\n^{(\K_n+1)}\r)}{\pi(\n)}\mu_0,
\]
where $\x^*_1,\ldots,\x^{*}_{\K_n}$ are the $\K_n$ distinct values in $\{\x_1,\ldots,\x_n\}$, $\n = (\n_1,\ldots\n_{\K_n})$,  $\n^{(j)} = (\n_1,\ldots \n_{j-1},\n_j+1,\n_{j+1}, \ldots,\n_{\K_n})$ and  $\n^{(\K_n+1)} = (\n_1,\ldots,\n_{\K_n},1)$,  with $\n_{j} = |\{i\leq n:\x_i = \x^{*}_j\}|$.
\item[\emph{III.}] The law of $(\bPi(\x_{1:n}))_{n \geq 1}$ is described by the EPPF $\pi$ and for every $n \geq 1$ and $B_1,\ldots,B_n \in \B(S)$
\[
\Prob\l[\x_1\in B_1,\ldots,\x_n \in B_n \mid \bPi(\x_{1:n})\r] = \prod_{i=1}^{\K_n}\mu_0\l(\bigcap_{j\in \bPi_i}B_j\r),
\]
where $\bPi_1,\ldots,\bPi_{\K_n}$ are the random blocks of $\bPi(\x_{1:n})$.
\item[\emph{IV.}] For every $n \geq 1$, and any $x_1,\ldots,x_n \in S$,
\[
\Prob\l[\x_1 \in dx_1,\ldots\x_n \in dx_n\r] = \pi(n_1,\ldots,n_k) \prod_{i=1}^k \mu_0(dx^{*}_j),
\]
where $x^*_1,\ldots,x^{*}_{k}$ are the distinct values in $\{x_1,\ldots,x_n\}$, and $n_{j} = |\{i:x_i = x^{*}_j\}|$.
\end{itemize}
\end{theo}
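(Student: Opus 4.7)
The plan is to establish a cyclic chain of implications III $\Rightarrow$ IV $\Rightarrow$ II $\Rightarrow$ III, and separately prove I $\Leftrightarrow$ IV, the latter being the substantive direction that brings the species sampling structure into play. Throughout, the diffuseness of $\mu_0$ is used repeatedly to decouple the partition structure from the identities of the distinct values, which is what makes the EPPF $\pi$ and the base measure $\mu_0$ complementary rather than overlapping pieces of information.

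For III $\Rightarrow$ IV, I would integrate the conditional distribution in III against the law of $\bPi(\x_{1:n})$. Given a labelling $(x_1,\ldots,x_n)$ with distinct values $x^*_1,\ldots,x^*_k$ and induced partition $A$ with block sizes $(n_1,\ldots,n_k)$, diffuseness of $\mu_0$ implies that sampling $k$ i.i.d.\ points from $\mu_0$ produces coincidences only on a $\mu_0$-null set; hence the event in IV is realised exactly when $\bPi(\x_{1:n}) = A$ and the $k$ distinct values land at the prescribed points, yielding $\pi(n_1,\ldots,n_k)\prod_{i=1}^k\mu_0(dx^*_i)$. The converse IV $\Rightarrow$ III comes from aggregating the joint law over all labellings compatible with a fixed partition. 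For IV $\Leftrightarrow$ II, Bayes's rule applied to IV with sample sizes $n$ and $n+1$ produces the prediction rule \eqref{eq:pred_rule}, the tie and fresh-value cases arising according to whether $\x_{n+1}$ coincides with some $\x^*_j$ or not; conversely, iterating II from $\x_1\sim\mu_0$ reconstructs IV, and consistency of the resulting joint law under index permutations is ensured precisely by the addition rule \eqref{eq:add_rule} for $\pi$.

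Finally, for I $\Leftrightarrow$ IV: assuming I, conditional i.i.d.\ sampling gives $\Prob[\x_1\in dx_1,\ldots,\x_n\in dx_n\mid\bmu]=\prod_i\bmu(dx_i)$; expanding this product using the atomic decomposition \eqref{eq:ssp} and taking expectations, the diffuse component $(1-\sum_j\w_j)\mu_0$ can only contribute to singleton blocks (again by diffuseness of $\mu_0$), and the surviving sum over tuples of atoms $(\bxi_{j_1},\ldots,\bxi_{j_k})$ collapses, via a size-biased re-indexing, to the functional of $(\tw_j)$ that the hypothesis of I equates with $\pi(n_1,\ldots,n_k)$, yielding IV. The converse IV $\Rightarrow$ I is the main obstacle: from the symmetry of the expression in IV under permutations of $(x_1,\ldots,x_n)$ one reads off exchangeability, de Finetti delivers a directing random measure $\bmu$, and the task is to identify $\bmu$ as a species sampling process with base $\mu_0$ whose size-biased pseudo-permuted weights reproduce $\pi$ through Pitman's identity. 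I would achieve this by splitting $\bmu$ into its atomic and diffuse parts via the Lebesgue decomposition, using the case $n=1$ of IV to show $\Esp[\bmu]=\mu_0$ and the diffuseness of $\mu_0$ to pin the atomless part of $\bmu$ to a $\mu_0$-multiple, and finally matching the full family of finite-dimensional distributions in IV to the Pitman formula for the size-biased pseudo-permutation of the weights, so that the expression displayed in I is forced by uniqueness of the joint law.
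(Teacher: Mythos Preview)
The paper does not actually supply a proof of this theorem: immediately after the statement it writes ``The proof of Theorem~\ref{theo:Rep_SSP} is a consequence of the work by \cite{P95,P96b,P96}.'' So there is no in-paper argument to compare against; the authors are simply recording, in a single statement, several equivalent characterisations assembled from Pitman's papers.

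Your sketch is a faithful reconstruction of how those equivalences are usually argued, and the cycle among II, III and IV is routine and correctly outlined. The only place where your proposal is genuinely thin is the direction IV~$\Rightarrow$~I. Knowing that $\Esp[\bmu]=\mu_0$ and that $\mu_0$ is diffuse does \emph{not} by itself force the diffuse part of $\bmu$ to be a (random) multiple of $\mu_0$; one needs Kingman's paintbox correspondence for exchangeable partitions together with Pitman's construction of the species sampling sequence to obtain the representation $\bmu=\sum_j\w_j\delta_{\bxi_j}+(1-\sum_j\w_j)\mu_0$ with $\{\bxi_j\}$ i.i.d.\ $\mu_0$ and independent of $(\w_j)$. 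Concretely, from III one extracts that the distinct values in order of appearance are i.i.d.\ $\mu_0$ and independent of $\bPi(\x_{1:n})$; Kingman's theorem then supplies the limiting frequencies $(\w_j)$, and the size-biased pseudo-permutation formula in I is exactly Pitman's sampling identity for those frequencies. Your phrase ``matching the full family of finite-dimensional distributions in IV to the Pitman formula'' gestures at this, but the Lebesgue-decomposition step as you state it would not go through on its own.
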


First we clarify what we mean by a size-biased pseudo-permutation. For a sequence of weights, $\W = (\w_j)_{j \geq 1}$, with $\w_j \geq 0$ and $\sum_{j \geq 1} \w_j \leq 1$ almost surely, we call $\tW = \l(\tw_1,\tw_2,\ldots\r)$, a size-biased pseudo-permutation of $\W$ if 
\[
\Prob\l[\tw_1 \in \cdot\mi\W\r] = \sum_{j \geq 1}\w_j\delta_{\w_j}+\l(1-\sum_{j \geq 1}\w_j\r)\delta_0,
\]
and for every $i \geq 1$,
\begin{align*}
&\Prob\l[\tw_{i+1} \in \cdot\mid\W,\tw_1,\ldots,\tw_i\r] = \frac{\sum_{j \geq 1}\w_j\delta_{\w_j}-\sum_{j=1}^{i}\tw_j\delta_{\tw_j}+\l(1-\sum_{j \geq 1}\w_j\r)\delta_0}{\l(1-\sum_{j=1}^i\tw_j\r)}
\end{align*}
if $\l(1-\sum_{j=1}^i\tw_j\r) > 0$, and $\Prob\l[\tw_{i+1} \in \cdot \mi \W,\tw_1,\ldots\tw_i\r] = \delta_0$, otherwise. If the weights sum up to one almost surely, this definition coincides with the notion of size-biased permutation of the weights \citep{P96b}. 

The proof of Theorem \ref{theo:Rep_SSP} is a consequence of the work by \cite{P95,P96b,P96}. In particular, point III reveals that for $\{\x_i\mid \bmu \iid \bmu ; \, i \geq 1\}$ driven by a species sampling process, the random vector $(\x_1,\ldots,\x_n)$ is distributed as $\l(\x^{*}_{l_1},\ldots,\x^{*}_{l_n}\r)$, with $l_r = j$ if and only if $r \in \bPi_j$. For example, say that for some realization $\bPi(\x_{1:6}) = \{\{1,4,5\},\{2,3\},\{6\}\}$, then under such event, $(\x_1,\ldots,\x_6)$ distributes as $(\x^*_1,\x^{*}_2,\x^{*}_2,\x^{*}_1,\x^*_1,\x^*_3)$, where $\{\x^*_i \iid \mu_0;\, i \geq 1\}$ independently of $\bPi(\x_{1:6})$. With this in mind, the proof of the following result is straightforward. 

\begin{theo}\label{theo:means_samples}
Let $(\x_i)_{i \geq 1}$ be an exchangeable sequence driven by a species sampling process, $\bmu$, with base measure $\mu_0$ and corresponding EPPF $\pi$. Fix $n \geq 1$ and let $f:S^{n} \to \R$ be measurable function, then
\begin{align}
&\Esp\l[f(\x_1,\ldots,\x_n)\r]\label{eq:means_samples_supp}\\
& = \sum_{\{A_1,\ldots,A_k\}}\l\{\int f(x_{l_1},\ldots,x_{l_n})\prod_{j=1}^{k}\prod_{r \in A_j}\Ind_{\{l_r = j\}}\,\mu_0(dx_1)\ldots\mu_0(dx_k)\r\}\pi(|A_1|,\ldots,|A_k|),\nonumber
\end{align}
whenever the integrals in the right side exist, and where the sum ranges over all partitions of $\{1,\ldots,n\}$. 
\end{theo}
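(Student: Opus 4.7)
The plan is to reduce everything to Theorem \ref{theo:Rep_SSP}, in particular part (III), which provides the conditional law of $(\x_1,\ldots,\x_n)$ given the random partition $\bPi(\x_{1:n})$. First I would apply the tower property to condition on $\bPi(\x_{1:n})$, and then sum over all possible realizations of this partition. Since $\bPi(\x_{1:n})$ takes finitely many values (one per set partition of $\{1,\ldots,n\}$), the outer expectation becomes a finite sum,
\[
\Esp[f(\x_1,\ldots,\x_n)] = \sum_{\{A_1,\ldots,A_k\}} \Esp\l[f(\x_1,\ldots,\x_n) \mi \bPi(\x_{1:n}) = \{A_1,\ldots,A_k\}\r]\,\Prob[\bPi(\x_{1:n}) = \{A_1,\ldots,A_k\}],
\]
and by the EPPF characterization in part (III) of Theorem \ref{theo:Rep_SSP}, the outer probabilities are exactly $\pi(|A_1|,\ldots,|A_k|)$.

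Next I would evaluate the conditional expectation. According to part (III), given that $\bPi(\x_{1:n}) = \{A_1,\ldots,A_k\}$, one can write $(\x_1,\ldots,\x_n) = (\x^*_{l_1},\ldots,\x^*_{l_n})$, where $\{\x^*_j\}_{j \leq k}$ are i.i.d.\ from $\mu_0$ and $l_r = j$ iff $r \in A_j$; this is a restatement of the assertion that on each block $A_j$ the entries $\x_r$ coincide with the same $\mu_0$-draw $\x^*_j$, and that distinct blocks receive independent draws (because $\mu_0$ is diffuse, so atoms across distinct blocks match with probability zero). Integrating $f$ against the joint law of $(\x^*_1,\ldots,\x^*_k)$ gives
\[
\Esp\l[f(\x_1,\ldots,\x_n) \mi \bPi(\x_{1:n}) = \{A_1,\ldots,A_k\}\r] = \int f(x_{l_1},\ldots,x_{l_n})\,\mu_0(dx_1)\cdots\mu_0(dx_k),
\]
and the product of indicators $\prod_{j=1}^k\prod_{r \in A_j}\Ind_{\{l_r = j\}}$ in the statement is simply the bookkeeping device that enforces the assignment $l_r = j \iff r \in A_j$ determined by the partition; inserting this factor under the integral leaves the value unchanged.

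Combining these two steps yields the announced identity. The argument is essentially a matter of unfolding definitions once Theorem \ref{theo:Rep_SSP} is granted, so I do not anticipate a genuine obstacle; the only delicate point is ensuring that the measurability of $f$ suffices to invoke Fubini-type manipulations when rewriting the conditional expectation as an iterated integral against $\mu_0^{\otimes k}$, which is precisely the hypothesis ``whenever the integrals on the right side exist''. This hypothesis also guarantees that both sides are well-defined and the tower-property decomposition is valid term by term, so no convergence issue arises from the finite outer sum.
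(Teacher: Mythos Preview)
Your proposal is correct and follows essentially the same route as the paper's proof: both condition on $\bPi(\x_{1:n})$ via the tower property, invoke part (III) of Theorem \ref{theo:Rep_SSP} to identify the conditional law of $(\x_1,\ldots,\x_n)$ with that of $(\x^*_{l_1},\ldots,\x^*_{l_n})$ for i.i.d.\ $\mu_0$-draws $\x^*_j$, and then expand the outer expectation as a finite sum over partitions weighted by $\pi(|A_1|,\ldots,|A_k|)$. Your remarks on the role of the indicator product and on the integrability hypothesis are also in line with how the paper handles these points.
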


\begin{proof}
Let $\bPi_1,\ldots,\bPi_{\K_n}$ denote the blocks of $\bPi(\x_{1:n})$, and consider $\{\x^*_i \iid \mu_0;\, i \geq 1\}$ independently. By Theorem \ref{theo:Rep_SSP} III, and the tower property of conditional 
\begin{align*}
&\Esp\l[f(\x_1,\ldots,\x_n)\r] = \Esp\l[\Esp\l[f(\x_1,\ldots,\x_n)\mid \bPi(\x_{1:n})\r]\r]\\
& = \Esp\l[f(\x^*_{l_1},\ldots,\x^{*}_{l_n})\prod_{j=1}^{\K_n}\prod_{r \in \bPi_j}\Ind_{\{l_r = j\}}\r]\\
& = \sum_{\{A_1,\ldots,A_k\}}\l\{\int f(x_{l_1},\ldots,x_{l_n})\prod_{j=1}^{k}\prod_{r \in A_j}\Ind_{\{l_r = j\}}\,\mu_0(dx_1)\ldots\mu_0(dx_k)\r\}\pi(|A_1|,\ldots,|A_k|),
\end{align*}
whenever the integral in the right side exist, and where the sum ranges over all partitions of $\{1,\ldots,n\}$.
\end{proof}

Theorem \ref{theo:means_samples} generalizes a result by \cite{Y84}, in which \eqref{eq:means_samples_supp} is derived only for symmetric functions and for the special case where $\bmu$ is a Dirichlet process. Related formulae also appear and are cleverly exploited in \cite{LP09}. 

As mention in Section \ref{sec:exch_ssp}, the tie probability, $\rho = \Prob[\x_1 = \x_2] = \pi(2)$, determines important characteristics of a species sampling process. For instance the following conditional moments are completely determined by the tie probability and the base measure.

\begin{cor}\label{cor:xi_xj_mom}
Let $\bmu$ be a species sampling process with base measure $\mu_0$ and tie probability $\rho$. Consider $\{\x_1,\x_2,\ldots\mid \bmu\}\sim \bmu$. Then, for every $i \neq j$
\begin{itemize}
\item[\emph{a)}] $\Esp[\x_j\mid\x_i] = \rho\, \x_i + (1-\rho)\Esp[\x_i]$
\item[\emph{b)}] $\Var(\x_j\mid\x_i) = (1-\rho)\l\{\rho\l(\x_i-\Esp[\x_i]\r)^2 + \Var(\x_i)\r\}$
\item[\emph{c)}] $\Cov(\x_i,\x_j) = \rho\,\Var(\x_i)$
\item[\emph{d)}] $\Corr(\x_i,\x_j) = \rho$.
\end{itemize}
\end{cor}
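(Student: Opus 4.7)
The proof rests entirely on the conditional distribution
\[
\Prob[\x_j \in \cdot \mid \x_i] \;=\; \rho\,\delta_{\x_i} + (1-\rho)\,\mu_0,\qquad i\neq j,
\]
which is already derived in the excerpt just before the corollary. So the plan is to exploit this two-component mixture representation and reduce each of (a)--(d) to elementary calculations, tacitly assuming $\int x^2\,\mu_0(dx) < \infty$ so that the moments in question exist (and working componentwise if $S \subseteq \R^d$). Throughout I will freely use that $(\x_i)_{i\geq 1}$ is exchangeable, hence $\Esp[\x_j] = \Esp[\x_i]$ and $\Var(\x_j)=\Var(\x_i)$.

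For (a), integrate $x$ against the mixture:
\[
\Esp[\x_j\mid \x_i] = \rho\,\x_i + (1-\rho)\!\int x\,\mu_0(dx) = \rho\,\x_i + (1-\rho)\Esp[\x_i],
\]
using $\Esp[\x_i] = \int x\,\mu_0(dx)$, which follows from $\x_i\sim \mu_0$. For (b), compute $\Esp[\x_j^2\mid\x_i] = \rho\,\x_i^2 + (1-\rho)\Esp[\x_i^2]$ the same way and subtract the square of the conditional mean from (a); a short algebraic expansion collects the cross-terms and yields
\[
\Var(\x_j\mid \x_i) = (1-\rho)\l\{\rho(\x_i-\Esp[\x_i])^2 + \Var(\x_i)\r\}.
\]

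For (c), apply the tower property together with (a):
\[
\Esp[\x_i\x_j] = \Esp[\x_i\Esp[\x_j\mid\x_i]] = \rho\Esp[\x_i^2] + (1-\rho)\Esp[\x_i]^2.
\]
Subtracting $\Esp[\x_i]\Esp[\x_j] = \Esp[\x_i]^2$ (by exchangeability) gives $\Cov(\x_i,\x_j) = \rho\Var(\x_i)$. Part (d) is then immediate from (c) and the equality $\Var(\x_i)=\Var(\x_j)$.

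There is no genuine obstacle here; the only thing to be careful about is ensuring the relevant integrals exist (so one should state the implicit integrability hypothesis $\int x^2\mu_0(dx)<\infty$) and observing that when $S$ is multivariate the identities in (a)--(d) are to be read componentwise, with $\Var$ and $\Cov$ interpreted as matrices. All four statements then reduce to the single conditional-distribution identity displayed above.
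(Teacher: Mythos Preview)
Your proof is correct and follows essentially the same route as the paper: both arguments start from the conditional law $\Prob[\x_j\in\cdot\mid\x_i]=\rho\,\delta_{\x_i}+(1-\rho)\mu_0$, obtain (a) and $\Esp[\x_j^2\mid\x_i]$ by integrating $x$ and $x^2$ against this mixture, expand to get (b), and then use the tower property plus exchangeability for (c) and (d). Your additional remarks on the implicit integrability assumption and the componentwise reading in the multivariate case are sensible clarifications not made explicit in the paper.
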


\begin{proof}
For any measurable function $f:[0,1] \to \R_+$, and for every $i \neq j$,
\[
\Esp\l[f(\x_j)\mi\x_i\r] = \rho\,f(\x_i) +(1-\rho)\int f(x) \mu_0(dx) = \rho\,f(\x_i) +(1-\rho)\Esp\l[f(\x_i)\r],
\]
noting that $\x_i \sim \mu_0$. The choice $f(x) = x$ proves (a). To prove (b) note that for $f(x) = x^2$, we obtain $\Esp\l[\x_j^2\mi\x_i\r] =\rho\, \x_i^2 +(1-\rho)\Esp\l[\x_i^{2}\r]$, this together with (a) show that
\begin{align*}
\Var(\x_j\mid \x_i) & = \rho\,\x_i^2+(1-\rho)\Esp[\x_i^2]-\l(\rho\,\x_i+(1-\rho)\Esp[\x_i]\r)^2\\
& = (1-\rho)\l\{\rho\l(\x_i-\Esp[\x_i]\r)^2+\Var(\x_i)\r\}.
\end{align*}
To prove (c) we first compute, using (a), $\Esp\l[\x_i\x_j\r] = \Esp[\x_i\Esp\l[\x_j\mi\x_i\r]] = \rho\Esp\l[\x_i^2\r] + (1-\rho)\Esp[\x_i]^2$. Thus
\[
\Cov(\x_i,\x_j) = \rho\Esp\l[\x_i^2\r] + (1-\rho)\Esp[\x_i]^2 - \Esp[\x_i]^2 = \rho\,\Var(\x_i).
\]
Finally, (d) follows by diving the last equation by $\Var(\x_i) = \sqrt{\Var(\x_i)\Var(\x_j)}$.
\end{proof}

Notice that for small values of $\rho$, $\Esp[\x_j\mid \x_i] \approx \Esp[\x_j]$, $\Var(\x_j\mid\x_i)  \approx \Var(\x_j)$ and $\Cov(\x_i,\x_j) \approx 0$, alternatively for values of $\rho $ close to $1$,  $\Esp[\x_j\mid \x_i] \approx \x_i$, $\Var(\x_j\mid\x_i)  \approx 0$ and $\Cov(\x_i,\x_j) \approx \Var(\x_i)$. So  $(\x_i)_{i \geq 1}$ behaves very similar to an i.i.d. sequence, whenever $\rho$ is close to $0$, and if $\rho \approx 1$ the behaviour of $(\x_i)_{i \geq 1}$ is similar to $(\x,\x,\ldots)$  with $\x \sim \mu_0$. Theorem \ref{theo:rho_limit} below formalizes this intuition, to prove it we use the following lemma, which also allows to characterize some moments of the species sampling process  in terms of its base measure and its tie probability.

\begin{lem}\label{lem:Emu(f)}
Let $S$ be a Polish space and consider a species sampling process $\bmu$ as in \eqref{eq:ssp}, with base measure $\mu_0$ and with tie probability $\rho$. Let $f,g:S \to \R$ be measurable and bounded functions. Let us denote $\bmu(f) = \int f(s) \bmu(ds)$ and analogously for $\mu_0$ and $g$. Then,
\begin{itemize}
\item[\emph{a)}] $\Esp\l[\bmu(f)\r] = \mu_0(f)$.
\item[\emph{b)}] $\Esp\l[\bmu(f)^2\r] = \rho \,\mu_0(f^2) + (1-\rho)\mu_0(f)^2$.
\item[\emph{c)}] $\Esp\l[\bmu(f)\bmu(g)\r] = \rho\, \mu_0(fg) + (1-\rho)\mu_0(f)\mu_0(g)$.
\end{itemize}
\end{lem}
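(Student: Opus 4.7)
The plan is to exploit the fact that the species sampling process $\bmu$ arises as the directing random measure of an exchangeable sequence $\{\x_i\mid\bmu\iid\bmu;\,i\geq1\}$ with base measure $\mu_0$ and EPPF $\pi$ satisfying $\pi(2)=\rho$, and then translate moments of $\bmu(f)$ into expectations of $f(\x_i)$ under this exchangeable law. Boundedness of $f$ and $g$ takes care of all integrability concerns and justifies the interchanges of expectation and integration used below.

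For part (a), I would simply write $\Esp[\bmu(f)]=\Esp[\Esp[f(\x_1)\mid\bmu]]=\Esp[f(\x_1)]$. Since $\x_1\sim\mu_0$, this yields $\Esp[\bmu(f)]=\mu_0(f)$.

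For part (c), by conditional independence of $\x_1$ and $\x_2$ given $\bmu$,
\[
\Esp[\bmu(f)\bmu(g)]=\Esp\!\left[\Esp[f(\x_1)\mid\bmu]\,\Esp[g(\x_2)\mid\bmu]\right]=\Esp[f(\x_1)g(\x_2)].
\]
Now I invoke the $n=2$ case of Theorem~\ref{theo:means_samples}: there are only two partitions of $\{1,2\}$, giving weights $\pi(2)=\rho$ and $\pi(1,1)=1-\rho$, so taking $h(x_1,x_2)=f(x_1)g(x_2)$ one obtains
\[
\Esp[f(\x_1)g(\x_2)]=\rho\int f(x)g(x)\,\mu_0(dx)+(1-\rho)\int f(x_1)g(x_2)\,\mu_0(dx_1)\mu_0(dx_2),
\]
which is exactly $\rho\,\mu_0(fg)+(1-\rho)\mu_0(f)\mu_0(g)$. (An equivalent route, avoiding the general formula, uses the prediction rule $\Prob[\x_2\in\cdot\mid\x_1]=\rho\,\delta_{\x_1}+(1-\rho)\mu_0$ noted in the text, so that $\Esp[g(\x_2)\mid\x_1]=\rho\,g(\x_1)+(1-\rho)\mu_0(g)$, and then multiply by $f(\x_1)$ and take expectations using (a).)

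Part (b) is the specialization of (c) to $g=f$, giving $\Esp[\bmu(f)^2]=\rho\,\mu_0(f^2)+(1-\rho)\mu_0(f)^2$. There is no serious obstacle here; the only subtlety worth flagging is measurability/integrability of the relevant random variables, which is immediate from the boundedness hypothesis on $f,g$, so Fubini and the tower property apply freely throughout.
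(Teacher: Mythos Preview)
Your proof is correct and takes a genuinely different route from the paper's. The paper works directly with the atomic decomposition $\bmu(f) = \sum_{j\geq 1}\w_j f(\bxi_j) + \w_0\,\mu_0(f)$ (where $\w_0 = 1-\sum_{j\geq 1}\w_j$), expands the products term by term, and uses the identities $\rho = \sum_{j\geq 1}\Esp[\w_j^2]$ and $1-\rho = \sum_{i\neq j}\Esp[\w_i\w_j] + \Esp[\w_0^2]$ together with independence of $(\bxi_j)_{j\geq 1}$ from $(\w_j)_{j\geq 1}$ and dominated convergence. You instead pass through the exchangeable sequence $(\x_i)_{i\geq 1}$ directed by $\bmu$, identifying $\Esp[\bmu(f)\bmu(g)]$ with $\Esp[f(\x_1)g(\x_2)]$ and then invoking the $n=2$ case of Theorem~\ref{theo:means_samples} (or equivalently the prediction rule). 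Your route is shorter and more conceptual, and there is no circularity since Theorem~\ref{theo:means_samples} is established independently of this lemma. The paper's direct computation, however, yields as a byproduct the weight identity $1-\rho = \sum_{i\neq j}\Esp[\w_i\w_j] + \Esp[\w_0^2]$, which is later invoked explicitly in the proof of Theorem~\ref{theo:rho_limit}(ii); your argument does not produce this intermediate relation, so if one follows your path that identity would have to be derived separately when it is needed.
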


\begin{proof}
Set $\w_0 = 1-\sum_{j \geq 1}\w_j$, so that $\sum_{j \geq 0}\w_j  = 1$ almost surely, and by a monotone convergence argument we also obtain $\sum_{j \geq 0}\Esp\l[\w_j\r] = 1$. Note that 
\[
\bmu(f) = \sum_{j \geq 1}\w_j f(\bxi_j) + \w_0\mu_0(f),
\]
and for any bound of $f$, $M$, we have that $|\sum_{j=1}^n\w_jf(\bxi_j)| < M$ almost surely for every $n \geq 1$. Hence, by linearity of the expectation, Lebesgue dominated convergence theorem, and since $\{\bxi_j \iid \mu_0;\, j \geq 1\}$ independently of the weights, we get
\begin{align*}
\Esp[\bmu(f)]&  = \sum_{j \geq 1}\Esp[\w_j]\Esp[f(\bxi_j)] + \Esp\l[\w_0\r]\mu_0(f) = \l(\sum_{j \geq 0}\Esp\l[\w_j\r]\r)\mu_0(f) = \mu_0(f).
\end{align*}
This proves the first part. To prove the second and thirds parts, first note that by the tower property of conditional expectation and monotone convergence theorem we get $\rho = \sum_{j \geq 1}\Esp\l[\l(\w_j\r)^2\r]$ and
\[
1 = \Esp\l[\l(\sum_{j \geq 0}\w_j\r)^2\r] = \sum_{j \geq 1}\Esp\l[\w_j^2 \r]+ \sum_{i \neq j}\Esp\l[\w_i\w_j\r] + \Esp\l[\w_0^{2}\r].
\]
Thus, $1-\rho = \sum_{i \neq j}\Esp\l[\w_i\w_j\r] + \Esp\l[\w_0^{2}\r]$, where $\sum_{i \neq j}a_ia_j$ denotes $\sum_{i \geq 0}\sum_{j \geq 0} a_ia_j\Ind_{\{i \neq j\}}$. Secondly, since $f$ and $g$ are bounded and $\bmu$ is a random probability measure we have that $\bmu(|f|),\bmu(|g|) < \infty$. Then,
\begin{align*}
\bmu(f)\bmu(g) & = \l(\sum_{j \geq 1}\w_j f(\bxi_j) + \w_0\mu_0(f)\r)\l(\sum_{j \geq 1}\w_j g(\bxi_j) + \w_0\mu_0(g)\r)\\
& = \sum_{j \geq 1}\w_j^2f(\bxi_j)g(\bxi_j) + \sum_{\stackrel{i,j \geq 1}{i \neq j}}\w_i\w_jf(\bxi_i)g(\bxi_j)+ \l(\sum_{j \geq 1}\w_0\w_jg(\bxi_j)\r)\mu_0(f)\\
& \quad \quad \quad \quad \quad \quad \quad \quad \quad \quad \quad \quad + \l(\sum_{j \geq 1}\w_0\w_jf(\bxi_j)\r)\mu_0(g) + \w_0^2\mu_0(f)\mu_0(g).
\end{align*}
Now, if $M$ is a bound for $f$, and $N$ is a bound of $g$ we have that for every $n \geq 1$, $|\sum_{j=1}^n\w_j\w_0f(\bxi_j))| \leq M$, $|\sum_{j=1}^n\w_j\w_0g(\bxi_j))| \leq N$, $|\sum_{j=1}^n\w_j^2f(\bxi_j)g(\bxi_j)| \leq MN$, and $|\sum_{i=1}^n\sum_{j=1}^n\w_i\w_jf(\bxi_j)g(\bxi_i)\Ind_{\{i \neq j\}}| \leq MN$. Thus, by linearity of the expectation, Lebesgue dominated convergence theorem, and as $\{\bxi_j \iid \mu_0;\, j \geq 1\}$ independently of the weights, we obtain
\begin{align*}
&\Esp\l[\bmu(f)\bmu(g)\r]\\
&= \sum_{j \geq 1}\Esp\l[\w_j^2\r]\Esp\l[f(\bxi_j)g(\bxi_j)\r] + \sum_{\stackrel{i,j \geq 1}{i \neq j}}\Esp\l[\w_i\w_j\r]\Esp\l[f(\bxi_i)\r]\Esp\l[g(\bxi_j)\r]+\Esp\l[\w_0^2\r]\mu_0(f)\mu_0(g)\\
& \quad \quad \quad \quad \quad \quad + \l(\sum_{j \geq 1}\Esp\l[\w_0\w_j\r]\Esp\l[g(\bxi_j)\r]\r)\mu_0(f)+ \l(\sum_{j \geq 1}\Esp\l[\w_0\w_j\r]\Esp\l[f(\bxi_j)\r]\r)\mu_0(g).\\
& = \sum_{j \geq 1}\Esp\l[\w_j^2\r]\mu_0(fg) + \sum_{i \neq j}\Esp\l[\w_i\w_j\r]\mu_0(f)\mu_0(g) + \Esp\l[\w_0^2\r]\mu_0(f)\mu_0(g)\\
& = \rho\mu_0(fg)+(1-\rho)\mu_0(f)\mu_0(g).
\end{align*}
This proves the third part of the lemma, and the choice $g = f$ gives the second part.
\end{proof}

In the context of Lemma \ref{lem:Emu(f)}, the particular choices $f = \Ind_A$ and $g = \Ind_{B}$ for some $A,B \in \B_S$, imply $\Esp[\bmu(A)] = \mu_0(A)$, $\Var\l(\bmu(A)\r) = \rho\,\mu_0(A)(1-\mu_0(A))$ and $\Cov\l(\bmu(A),\bmu(B)\r) = \rho (\mu_0(A\cap B) - \mu_0(A)\mu_0(B))$.

\begin{theo}\label{theo:rho_limit}
Consider a Polish space $S$ with Borel $\sigma$-algebra $\B_S$. Let $\mu_0,\mu^{(1)}_0,\mu^{(2)}_0,\ldots$ be diffuse probability measures over $(S,\B_S)$, such that $\mu^{(n)}_0$ converges weakly to $\mu_0$ as $n \to \infty$. For $n \geq 1$ let $\rho^{(n)} \in (0,1)$, and consider $\l\{\x_i^{(n)}\mi \bmu^{(n)} \iid \bmu^{(n)} ; \, i \geq 1\r\}$ where $\bmu^{(n)}$ is a species sampling process with base measure $\mu^{(n)}_0$ and tie probability $\rho^{(n)}$. 
\begin{itemize}
\item[\emph{i)}] If $\rho^{(n)} \to 0$, as $n \to \infty$, then $\bmu^{(n)}$ converges weakly in distribution to $\mu_0$, and $\l(\x^{(n)}_i\r)_{i \geq 1}$ converge in distribution to a sequence of i.i.d. random variables $\{\x_i \iid \mu_0;\, i \geq 1\}$.
\item[\emph{ii)}] If $\rho^{(n)} \to 1$, as $n \to \infty$, then $\bmu^{(n)}$ converges weakly in distribution to $\delta_{\x}$, where $\x \sim \mu_0$, and $\l(\x^{(n)}_i\r)_{i \geq 1}$ converge in distribution to the sequence of identical random variables $(\x,\x,\ldots)$.
\end{itemize}
\end{theo}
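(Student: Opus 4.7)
My plan is to reduce both parts to moment estimates supplied by Lemma~\ref{lem:Emu(f)}, using the fact that $\bmu^{(n)}\dwto\bmu$ is equivalent to joint convergence in distribution of $(\bmu^{(n)}(f_1),\ldots,\bmu^{(n)}(f_k))$ for every $k\geq 1$ and bounded continuous $f_1,\ldots,f_k:S\to\R$. Throughout I would also use that $\mu_0^{(n)}\wto\mu_0$ yields $\mu_0^{(n)}(f)\to\mu_0(f)$ and $\mu_0^{(n)}(f^2)\to\mu_0(f^2)$ for every such $f$.

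For part (i), I would fix $f$ and invoke Lemma~\ref{lem:Emu(f)} to obtain $\Esp[\bmu^{(n)}(f)]=\mu_0^{(n)}(f)\to\mu_0(f)$ and $\Var(\bmu^{(n)}(f))=\rho^{(n)}\{\mu_0^{(n)}(f^2)-\mu_0^{(n)}(f)^2\}\to 0$. Hence $\bmu^{(n)}(f)\to\mu_0(f)$ in $L^2$, and componentwise $L^2$-convergence upgrades at once to joint distributional convergence to the deterministic vector $(\mu_0(f_1),\ldots,\mu_0(f_k))$, giving $\bmu^{(n)}\dwto\mu_0$. For the samples, conditional independence yields $\Esp[\prod_{i=1}^k f_i(\x_i^{(n)})]=\Esp[\prod_{i=1}^k\bmu^{(n)}(f_i)]$, and bounded convergence (the integrand is uniformly bounded by $\prod_i\|f_i\|_\infty$) sends this to $\prod_{i=1}^k\mu_0(f_i)$, matching the finite-dimensional distributions of $\{\x_i\iid\mu_0\}$.

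For part (ii) the limit is genuinely random, so I would instead look for a coupling that exhibits the limiting atom. The core computation would combine Lemma~\ref{lem:Emu(f)} with the tower identity $\Esp[\bmu^{(n)}(f)\,f(\x_1^{(n)})]=\Esp[\bmu^{(n)}(f)^2]$ (which follows from $\x_1^{(n)}\mid\bmu^{(n)}\sim\bmu^{(n)}$) to get
\[
\Esp\bigl[(\bmu^{(n)}(f)-f(\x_1^{(n)}))^2\bigr]=(1-\rho^{(n)})\bigl\{\mu_0^{(n)}(f^2)-\mu_0^{(n)}(f)^2\bigr\}\longrightarrow 0.
\]
Since $\x_1^{(n)}\sim\mu_0^{(n)}\dto\mu_0$, continuous mapping gives $f(\x_1^{(n)})\dto f(\x)$ with $\x\sim\mu_0$, and Slutsky's lemma then yields $\bmu^{(n)}(f)\dto f(\x)=\delta_\x(f)$. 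Using the same $\x_1^{(n)}$ as a coupling for finitely many test functions promotes this to joint distributional convergence, hence $\bmu^{(n)}\dwto\delta_\x$. For the samples I would similarly use $\Esp[(f(\x_i^{(n)})-f(\x_j^{(n)}))^2]=2(1-\rho^{(n)})\{\mu_0^{(n)}(f^2)-\mu_0^{(n)}(f)^2\}\to 0$ (for $i\neq j$), together with a telescoping argument, to replace $\prod_{i=1}^k f_i(\x_i^{(n)})$ by $F(\x_1^{(n)})=\prod_{i=1}^k f_i(\x_1^{(n)})$ with vanishing $L^1$-error, and conclude by continuous mapping applied to the bounded continuous $F$.

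The hard part will be part (ii): unlike (i), where each $\bmu^{(n)}(f)$ collapses to a constant, here one must produce a concrete random variable $\x$ realizing the degenerate limit $\delta_\x$. The tower-property identity is the non-routine ingredient that identifies $\x_1^{(n)}$ (or equivalently any single $\x_i^{(n)}$) as the right coupling; once this is in place, the remaining manipulations are standard.
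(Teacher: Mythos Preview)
Your proposal is correct and, particularly for part~(ii), takes a genuinely different and more streamlined route than the paper. For the convergence of $\bmu^{(n)}$ in part~(i), both you and the paper use the $L^2$ estimate from Lemma~\ref{lem:Emu(f)}. For the samples, however, the paper invokes Skorokhod's representation theorem to upgrade to almost-sure weak convergence of the directing measures and then applies the Portmanteau theorem to Borel sets (using the diffuseness of $\mu_0$ to ensure $\mu_0(\partial B)=0$); you instead stay with bounded continuous test functions and pass through $\Esp\bigl[\prod_i \bmu^{(n)}(f_i)\bigr]$ directly, which is more elementary and avoids the representation theorem altogether. The real divergence is in part~(ii): the paper orders the weights decreasingly, shows via the inequality $\sum_{j\geq 2}\Esp[(\w_j^{(n)})^2]\leq \sum_{i\neq j}\Esp[\w_i^{(n)}\w_j^{(n)}]=1-\rho^{(n)}-\Esp[(\w_0^{(n)})^2]$ that $\Esp[(\w_1^{(n)})^2]\to 1$, and then identifies the limiting atom as the first ordered atom after a Skorokhod coupling of the i.i.d.\ atom sequences. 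Your tower-property identity $\Esp[\bmu^{(n)}(f)\,f(\x_1^{(n)})]=\Esp[\bmu^{(n)}(f)^2]$ bypasses all of this, producing the coupling variable $\x_1^{(n)}$ for free with no reference to weight ordering; combined with the telescoping replacement of $\prod_i f_i(\x_i^{(n)})$ by $\prod_i f_i(\x_1^{(n)})$, it is shorter and conceptually cleaner. The trade-off is that the paper's argument makes explicit \emph{why} the limit is a point mass (the largest weight absorbs all the mass), whereas yours treats this as an opaque consequence of the second-moment collapse.
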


\begin{proof}
We may assume without loss of generality that all the species sampling processes are defined on the same probability space. First we prove (i), let $f: S \to \R$ be a continuous and bounded function. Since $f$ is continuous it is also measurable, and by Lemma \ref{lem:Emu(f)} we have that
\begin{equation}\label{eq:muL2}
\begin{aligned}
\Esp&\l[\l\{\bmu^{(n)}(f)-\mu_0(f)\r\}^2\r]\\
& = \Esp\l[\l\{\bmu^{(n)}(f)\r\}^2\r]- 2\Esp\l[\bmu^{(n)}(f)\r]\mu_0(f) + \l\{\mu_0(f)\r\}^2\\
& =  \rho^{(n)}\, \mu^{(n)}_0\l(f^2\r) + \l(1-\rho^{(n)}\r)\l\{\mu^{(n)}_0(f)\r\}^2 - 2\mu^{(n)}_0(f)\mu_0(f) + \l\{\mu_0(f)\r\}^2.
\end{aligned}
\end{equation}
By hypothesis we know that $\mu^{(n)}_0 \wto \mu_0$ and $\rho^{(n)} \to 0$, as $n \to \infty$, by taking limits in \eqref{eq:muL2}, we found that 
\[
\Esp\l[\l\{\bmu^{(n)}(f)-\mu_0(f)\r\}^2\r] \to 0,
\]
as $n \to \infty$. That is, $\bmu^{(n)}(f)$ converges to $\mu_0(f)$ in $\cl{L}_2$, which implies $\bmu^{(n)}(f) \dto \mu_0(f)$. Since  $f$ was chosen arbitrarily, this proves (i) for the species sampling processes. Given that $S$ and $\cl{P}(S)$ are Polish, we might construct on some probability space $\l(\hat{\Omega},\hat{\F},\hat{\Prob}\r)$ some exchangeable sequences $\l\{\hat{\X}^{(n)} = \l(\hat{\x}^{(n)}_i\r)_{i \geq 1}\r\}_{n \geq 1}$, such that $\hat{\X}^{(n)}$ is directed by a species sampling process, $\hat{\bmu}^{(n)}$, with base measure $\mu_0^{(n)}$ and tie probability $\rho^{(n)}$, and where $\hat{\bmu}^{(n)}$ converges weakly almost surely to $\mu_0$, as $n \to \infty$. Fix $m \geq 1$ and $B_1,\ldots,B_m \in \B_S$. Since $\mu_0$ is diffuse, $\mu_0(\partial B_i) = 0$, and by the Portmanteau theorem we know $\hat{\bmu}^{(n)}(B_i) \to \mu_0(B_i)$ almost surely as $n \to \infty$. This together with the representation theorem for exchangeable sequences imply
\[
\hat{\Prob}\l[\bigcap_{i=1}^m \l(\hat{\x}^{(n)}_i \in B_i\r)\mi \hat{\bmu}^{(n)}\r] = \prod_{i=1}^{m}\hat{\bmu}^{(n)}(B_i) \to \prod_{i=1}^{m}\mu_0(B_i),
\]
almost surely, as $n \to \infty$, and by taking expectations we obtain
\[
\hat{\Prob}\l[\bigcap_{i=1}^m \l(\hat{\x}^{(n)}_i \in B_i\r)\r] \to \hat{\Esp}\l[\prod_{i=1}^{m}\mu_0(B_i)\r] = \prod_{i=1}^{m}\mu_0(B_i) = \hat{\Prob}\l[\bigcap_{i=1}^m \l(\hat{\x}_i \in B_i\r)\r],
\]
where $\hat{\X} = \l(\hat{\x}_i\r)_{i \geq 1}$, with $\{\hat{\x}_i \iid \mu_0;\, i \geq 1\}$. Thus $\l(\x^{(n)}_i\r)_{i \geq 1} \deq \hat{\X}^{(n)} \dto \hat{\X}$ as $n \to \infty$, and we have proven (i).

To prove (ii)  let $\w^{(n)}_1 \geq \w^{(n)}_2 \geq \cdots$ be the decreasingly ordered weights of $\bmu^{(n)}$ and let $\bxi^{(n)}_j$ be the atom corresponding to $\w^{(n)}_j$. Let us denote $\w^{(n)}_0 = 1-\sum_{j \geq 1}\w^{(n)}_j$. Note that, using monotone convergence theorem, we can write
\begin{equation}\label{eq:rho_n}
\rho^{(n)} = \Esp\l[\Prob\l[\x^{(n)}_1 = \x^{(n)}_2 \mi \bmu^{(n)}\r]\r] = \sum_{j \geq 1}\Esp\l[\l(\w^{(n)}_j\r)^2\r]
\end{equation}
and by the proof of Lemma \ref{lem:Emu(f)} we also know
\begin{equation}\label{eq:1-rho_n}
1-\rho^{(n)} = \Esp\l[\l(\w^{(n)}_0\r)^2\r] + \sum_{i \neq j}\Esp\l[\w^{(n)}_j\w^{(n)}_i\r]
\end{equation}
for $n \geq 1$. Since the weights are decreasing, we must have that for every $i \geq j \geq 2$, $\Esp\l[\w^{(n)}_i\w^{(n)}_j\r] \leq \Esp\l[\w^{(n)}_i\w^{(n)}_{j-1}\r]$, hence
\begin{equation}\label{eq:sum_wij_bound}
\sum_{i \neq j}\Esp\l[\w^{(n)}_i\w^{(n)}_j\r] \geq \sum_{j \geq 2}\sum_{i \geq j}\Esp\l[\w^{(n)}_i\w^{(n)}_j\r] \geq \sum_{j \geq 2}\Esp\l[\l(\w^{(n)}_j\r)^2\r] \geq 0,
\end{equation}
for $n \geq 1$. By taking limits, as $n \to \infty$, by \eqref{eq:1-rho_n} and \eqref{eq:sum_wij_bound}, we found $\sum_{j \geq 2}\Esp\l[\l(\w^{(n)}_j\r)^2\r] \to 0$, which together with \eqref{eq:rho_n} proves that $\Esp\l[\l(\w^{(n)}_1\r)^2\r] \to 1$. Since $0 \leq \w^{(n)}_1 \leq 1$, and $\sum_{j \geq 0} \Esp\l[\w^{(n)}_j\r]= 1$, we obtain
\begin{equation}\label{eq:Ew1Ew0}
\Esp\l[\w^{(n)}_1\r] \to 1 \quad  \text{ and } \quad \sum_{j \neq 1}\Esp\l[\w^{(n)}_j\r] \to 0,
\end{equation}
as $n \to \infty$. Seeing that  all the corresponding spaces are Polish, and $\mu^{(n)}_0 \wto \mu_0$, we might construct on a probability space $\l(\hat{\Omega},\hat{\F},\hat{\Prob}\r)$, some independent sequences, $\l\{\hat{\bxi}_j^{(n)} \iid \mu_0^{(n)};\, j \geq 1\r\}$, and $\l(\hat{\w}^{(n)}_j\r)_{j \geq 1} \deq \l(\w^{(n)}_j\r)_{j \geq 1}$, such that $\hat{\bxi}^{(n)}_j \to \hat{\bxi}_j\sim \mu_0$, almost surely, as $n \to \infty$, independently for $j \geq 1$. Define $\hat{\bmu}^{(n)} = \sum_{j \geq 1}\hat{\w}_j^{(n)}\delta_{\hat{\bxi}^{(n)}_j} + \hat{\w}_0^{(n)}\mu^{(n)}_0$, where $\hat{\w}_0^{(n)} = 1-\sum_{j \geq 1}\hat{\w}^{(n)}_j$. Then for any continuous and bounded function, $f$, by  Lemma \ref{lem:Emu(f)}
\begin{equation}\label{eq:muL2_1}
\begin{aligned}
\Esp&\l[\l\{\hat{\bmu}^{(n)}(f)-\delta_{\hat{\bxi}_1}(f)\r\}^2\r]\\
& = \Esp\l[\l\{\hat{\bmu}^{(n)}(f)\r\}^2\r]- 2\Esp\l[\hat{\bmu}^{(n)}(f)\,f\l(\hat{\bxi}_1\r)\r] + \Esp\l[\l\{f\l(\hat{\bxi}_1\r)\r\}^2\r]\\
& =  \rho^{(n)}\, \mu^{(n)}_0\l(f^2\r) + \l(1-\rho^{(n)}\r)\l\{\mu^{(n)}_0(f)\r\}^2 - 2\Esp\l[\hat{\bmu}^{(n)}(f)\,f\l(\hat{\bxi}_1\r)\r] + \mu_0(f^2).
\end{aligned}
\end{equation}
As $f$ is bounded, we can write
\begin{equation*}
\begin{aligned}
\Esp\l[\hat{\w}^{(n)}_1\r]\Esp\l[f\l(\hat{\bxi}^{(n)}_1\r)f\l(\hat{\bxi}_1\r)\r]-&M^2\sum_{j \neq 1}\Esp\l[\hat{\w}^{(n)}_j\r]  \leq \Esp\l[\hat{\bmu}^{(n)}(f)\,f(\hat{\bxi}_1)\r]\\
& \leq \Esp\l[\hat{\w}^{(n)}_1\r]\Esp\l[f\l(\hat{\bxi}^{(n)}_1\r)f\l(\hat{\bxi}_1\r)\r]+M^2\sum_{j \neq 1}\Esp\l[\hat{\w}^{(n)}_j\r]
\end{aligned}
\end{equation*}
where $M$ is a bound of $f$. By taking limits as $n \to \infty$ in the last equation and by \eqref{eq:Ew1Ew0}, we get
\[
\Esp\l[\hat{\bmu}^{(n)}(f)\,f\l(\hat{\bxi}_1\r)\r] \to \Esp\l[\l\{f\l(\hat{\bxi}_1\r)\r\}^2\r] = \mu_0(f^2).
\]
Hence, by making $n \to \infty$ in \eqref{eq:muL2_1}, we obtain
\[
\Esp\l[\l\{\hat{\bmu}^{(n)}(f)-\delta_{\hat{\bxi}_1}(f)\r\}^2\r] \to 0.
\]
That is $\hat{\bmu}^{(n)}(f) \to \delta_{\hat{\bxi}_1}(f)$ in $\cl{L}_2$, which implies $\bmu^{(n)}(f) \deq \hat{\bmu}^{(n)}(f) \dto \delta_{\hat{\bxi}_1}(f)$. As this holds for every continuous and bounded function, $f$, we obtain $\bmu^{(n)} \dwto \delta_{\hat{\bxi}_1}$, as $n \to \infty$. Set $\hat{\x} = \hat{\bxi}_1$, under analogous arguments as in (i), we may assume without loss of generality that $\hat{\bmu}^{(n)}$ converges weakly almost surely to $\delta_{\hat{\x}}$ as $n \to \infty$, and consider exchangeable sequences $\l\{\hat{\X}^{(n)} = \l(\hat{\x}^{(n)}_i\r)_{i \geq 1}\r\}_{n \geq 1}$, such that $\hat{\X}^{(n)}$ is directed by $\hat{\bmu}^{(n)}$. Fix $m \geq 1$ and $B_1,\ldots,B_m \in \B_S$. The diffuseness of $\mu_0$ implies that $\hat{\x} \not\in \partial B_i$ almost surely, so that outside a $\hat{\Prob}$-null set, $\delta_{\hat{\x}}(\partial B_i) = 0$, and using the Portmanteau theorem we obtain $\bmu^{(n)}(B_i) \to \delta_{\hat{\x}}(B_i)$, almost surely, as $n \to \infty$. The representation theorem for exchangeable sequences assures
\[
\hat{\Prob}\l[\bigcap_{i=1}^m \l(\hat{\x}^{(n)}_i \in B_i\r)\mi \hat{\bmu}^{(n)}\r] = \prod_{i=1}^{m}\hat{\bmu}^{(n)}(B_i) \to \prod_{i=1}^{m}\delta_{\hat{\x}}(B_i) ,
\]
almost surely, as $n \to \infty$, and by taking expectations we get
\[
\hat{\Prob}\l[\bigcap_{i=1}^m \l(\hat{\x}^{(n)}_i \in B_i\r)\r] \to  \hat{\Esp}\l[\prod_{i=1}^{m}\delta_{\hat{\x}}(B_i)\r]  = \hat{\Prob}\l[\hat{\x} \in B_1,\ldots,\hat{\x} \in B_m\r].
\]
Hence $\X^{(n)} \deq \hat{\X}^{(n)} \dto (\hat{\x},\hat{\x},\ldots)$ as $n \to \infty$. 
\end{proof}

The proof of Theorem \ref{theo:rho_limit} appears in \cite{GvdV17} for the particular case of the  Dirichlet process. Note that the elements of any exchangeable sequence are marginally  identically distributed, so in terms of their mutual dependence the two extrema are the case where the random variables are i.i.d. and the case where they are identical. This are precisely the two limits of exchangeable sequences driven by a species sampling process, when the tie probability approaches zero or one, respectively.

\section{Proof of main results}\label{sec:main-proofs}

This section is dedicated to prove the main results of the article.

\subsection{Proof of Theorem \ref{theo:exch_SB}}

\begin{proof}
(i) Following the proof of Proposition 7 in \cite{BO14}, it suffices to show that for every $0 < \varepsilon' < 1$, there exist $0 < \delta <\varepsilon'$ such that $\Prob\l[\bigcap_{i=1}^n (\delta < \v_i < \varepsilon')\r] > 0$, for every $n \geq 1$. Fix $0 < \varepsilon' < 1$ and consider $\varepsilon'' = \min\{\varepsilon,\varepsilon'\}$, where $\varepsilon > 0$ is such $(0,\varepsilon)$ is contained in the support of $\nu_0$. Set $\delta = \varepsilon''/2$, by the representation theorem for exchangeable sequences, Jensen's inequality and the fact that $(\delta,\varepsilon'') \subseteq (0,\varepsilon)$ is contained in the support of $\nu_0$,
\[
\Prob\l[\bigcap_{i=1}^n (\delta < \v_i < \varepsilon'')\r] = \Esp\l[\prod_{i=1}^n\bnu(\delta,\varepsilon'')\r] = \Esp\l[\l\{\bnu(\delta,\varepsilon'')\r\}^n\r] \geq \{\nu_0(\delta,\varepsilon'')\}^n > 0,
\]
for every $n \geq 1$. As $\varepsilon'' \leq \varepsilon'$, we conclude $\Prob\l[\bigcap_{i=1}^n (\delta < \v_i < \varepsilon')\r] \geq \Prob\l[\bigcap_{i=1}^n (\delta < \v_i < \varepsilon'')\r] >0$, for $n \geq 1$.

(ii) As explained in \cite{GvdV17}, $1-\sum_{i=1}^j \w_i = \prod_{i=1}^{j}(1-\v_i)$, for every $j \geq 1$. From which is evident that, $\sum_{j \geq 1}\w_j = 1$ if and only if $\prod_{i=1}^{j}(1-\v_i) \to 0$, as $j \to \infty$, almost surely. Since $0 \leq \prod_{i=1}^{j}(1-\v_i) \leq 1$, this is  equivalent to $\Esp\l[\prod_{i=1}^{j}(1-\v_i)\r] \to 0$. As $(\v_i)_{i \geq 1}$ is exchangeable and directed by $\bnu$, we get $\Esp\l[\prod_{i=1}^{j}(1-\v_i)\r] = \Esp\l[(1-\Esp[\v_1\mid\bnu])^j\r]$. Now, if $\bnu(\{0\}) < 1$ almost surely, then $\Prob\l[\v_1 > 0\mid \bnu\r] > 0$, almost surely. Since $\v_1$ is non-negative, this shows $\Esp[\v_1\mid\bnu] > 0$ almost surely, hence $\Esp\l[\prod_{i=1}^{j}(1-\v_i)\r] = \Esp\l[(1-\Esp[\v_1\mid\bnu])^j\r] \to 0$ as $j \to \infty$. Alternatively, if $\Prob\l[\bnu(\{0\}) = 1\r] > 0$, then for every $j \geq 1$,
\[
\Esp\l[(1-\Esp[\v_1\mid\bnu])^j\r] = \Esp\l[(1-\Esp[\v_1\mid\bnu])^j\Ind_{\{\bnu(\{0\}) < 1\}}\r] + \Prob\l[\bnu(\{0\}) = 1\r].
\]
Which implies 
\[
\lim_{j \to \infty}\Esp\l[\prod_{i=1}^{j}(1-\v_i)\r] = \lim_{j \to \infty}\Esp\l[(1-\Esp[\v_1\mid\bnu])^j\r] \geq \Prob\l[\bnu(\{0\}) = 1\r] > 0.
\]
\end{proof}

\subsection{Proof of Theorem \ref{theo:exch_SB_limit_0}}

\begin{proof}
(i) First we see that the sequences of length variables $\V^{(n)} = \l(\v^{(n)}_i\r)_{i \geq 1}$ converge in distribution to $\V = (\v_i)_{i \geq 1}$. Since $[0,1]$ and $\cl{P}([0,1])$ are Polish, we might construct on some probability space $\l(\hat{\Omega},\hat{\F},\hat{\Prob}\r)$ some exchangeable sequences $\l\{\hat{\V}^{(n)} = \l(\hat{\v}^{(n)}_i\r)_{i \geq 1}\r\}_{n \geq 1}$, such that $\hat{\V}^{(n)}$ is directed by a random probability measure $\hat{\bnu}^{(n)} \deq \bnu^{(n)}$, and where $\hat{\bnu}^{(n)}$ converges weakly almost surely to $\nu_0 \neq \delta_0$, as $n \to \infty$. Fix $m \geq 1$ and $B_1,\ldots,B_m \in \B_{[0,1]}$ such that $\nu_0(\partial B_i) = 0$, for every $i \leq m$. By the Portmanteau theorem we know $\hat{\bnu}(B_i) \to \nu_0(B_i)$ almost surely as $n \to \infty$. This together with the representation theorem for exchangeable sequences imply
\[
\hat{\Prob}\l[\bigcap_{i=1}^m \l(\hat{\v}^{(n)}_i \in B_i\r)\mi \hat{\bnu}^{(n)}\r] = \prod_{i=1}^{m}\hat{\bnu}^{(n)}(B_i) \to \prod_{i=1}^{m}\nu_0(B_i),
\]
almost surely, as $n \to \infty$, and by taking expectations we obtain
\[
\hat{\Prob}\l[\bigcap_{i=1}^m \l(\hat{\v}^{(n)}_i \in B_i\r)\r] \to \prod_{i=1}^{m}\nu_0(B_i) = \Prob\l[\bigcap_{i=1}^m \l(\v_i \in B_i\r)\r].
\]
Since each sequence of length variables is countable it is enough to prove the convergence of the finite dimensional distributions, and we get $\V^{(n)} \deq \hat{\V}^{(n)} \dto \V$ as desired. Note that mapping
\[
(v_1,v_2,\ldots,v_j) \mapsto \l(v_1,v_2(1-v_1),\ldots,v_j\prod_{i=1}^{j-1}(1-v_i)\r)
\]
is continuous with respect to the product topology, thus the weights of $\bmu^{(n)}$, $\W^{(n)} = \SB\l[\V^{(n)}\r]$, converge in distribution to the weights of $\bmu$, $\W = \SB\l[\V\r]$. Further, the requirements on $\bnu^{(n)}$ and $\nu_0$ assure $\W^{(n)}$ and $\W$ take values in the infinite dimensional simplex, $\Delta_{\infty}$. In addition, as the base measures $\mu^{(n)}_0$ converge weakly to $\mu_0$, we also have that the atoms of $\bmu^{(n)}$, $\bXi^{(n)} = \l(\bxi^{(n)}_j\r)_{j \geq 1}$ (which are independent of $\W^{(n)}$) converge in distribution to the atoms of $\bmu$, $\bXi = (\bxi_j)_{j \geq 1}$ (which are independent of $\W$). Thus $\l(\bXi^{(n)},\W^{(n)}\r) \dto (\bXi,\W)$ in $S^{\infty} \times \Delta_{\infty}$, and Lemma \ref{lem:dL_cont_map} yields $\bmu^{(n)}$ converges weakly in distribution to $\bmu$. In particular, if $\nu_0$ denotes a $\Be(1,\theta)$ distribution we get $\bmu$ is a Dirichlet process, and as shown by \cite{P96b}, $\W$ is in size-biased order.

(ii) Analogously as in (i) we may construct sequences $\l\{\hat{\V}^{(n)} = \l(\hat{\v}^{(n)}_i\r)_{i \geq 1}\r\}_{n \geq 1}$, such that $\hat{\V}^{(n)}$ is directed by a random probability measure $\hat{\bnu}^{(n)} \deq \bnu^{(n)}$, and where $\hat{\bnu}^{(n)}$ converges weakly almost surely to $\delta_{\hat{\v}}$, with $\hat{\v} \sim \nu_0$, as $n \to \infty$. Fix $m \geq 1$ and $B_1,\ldots,B_m \in \B_{[0,1]}$ with $\nu_0(\partial B_i) = 0$, for every $i \leq m$. Then we get that $\hat{\v} \not\in \partial B_i$ almost surely, so that outside a $\hat{\Prob}$-null set, $\delta_{\hat{\v}}(B_i) = 0$, and using the Portmanteau theorem we obtain $\bnu^{(n)}(B_i) \to \delta_{\hat{\v}}(B_i)$, almost surely, as $n \to \infty$. The representation theorem for exchangeable sequences assures
\[
\hat{\Prob}\l[\bigcap_{i=1}^m \l(\hat{\v}^{(n)}_i \in B_i\r)\mi \hat{\bnu}^{(n)}\r] = \prod_{i=1}^{m}\hat{\bnu}^{(n)}(B_i) \to \prod_{i=1}^{m}\delta_{\hat{\v}}(B_i) ,
\]
almost surely, as $n \to \infty$, and by taking expectations we get
\[
\hat{\Prob}\l[\bigcap_{i=1}^m \l(\hat{\v}^{(n)}_i \in B_i\r)\r] \to  \hat{\Esp}\l[\prod_{i=1}^{m}\delta_{\hat{\v}}(B_i)\r]  = \hat{\Prob}\l[\hat{\v} \in B_1,\ldots,\hat{\v} \in B_m\r].
\]
Hence $\V^{(n)} \deq \hat{\V}^{(n)} \dto (\hat{\v},\hat{\v},\ldots) \deq (\v,\v,\ldots)$ as $n \to \infty$. Note that in this case, $\W = \SB[(\v,\v,\ldots)]$ is in decreasing order. The rest of the proof of (ii) follows identically is in (i).
\end{proof}

\subsection{Proof of Corollary \ref{cor:exch_SB_limit}}

\begin{proof}
By Theorem \ref{theo:rho_limit} we know that if $\rho_{\nu}^{(n)} \to 0$ then, $\bnu^{(n)}$ converges weakly in distribution to $\nu_0$. Alternatively, if $\rho_{\nu}^{(n)} \to 1$ we get $\bnu^{(n)}$ converges weakly in distribution to $\delta_{\v}$, where $\v \sim \nu_0$. The result then follows from Theorem \ref{theo:exch_SB_limit_0}.
\end{proof}

\subsection{Proof of Theorem \ref{theo:weights_ord}}

\begin{proof}
Fix $j \geq 1$. As $\nu_0$ diffuse, $\l(1-\v_j\r) >0$ almost surely, for every $\rho_{\nu}\in(0,1)$. Hence, $\w_j \geq \w_{j+1}$ if and only if $\v_j \geq \v_{j+1}(1-\v_j)$, or equivalently $\v_{j+1} \leq c\l(\v_j\r)$ where $c(v) = 1\wedge v(1-v)^{-1}$. Using the exchangeability of $\l(\v_i\r)_{i \geq 1}$, we know that under the event $\l\{\v_j \neq \v_{j+1}\r\}$, which occurs with probability $1-\rho_{\nu}$, the conditional distribution of $\l(\v_j,\v_{j+1}\r)$ is that of $(\v^*,\v)$, where $\v^*$ and $\v$ are i.i.d. from $\nu_0$ (see for instance Theorem \ref{theo:Rep_SSP}). Hence we can easily compute
\begin{align}
\Prob\l[\w_j \geq \w_{j+1}\r] & =\Prob\l[\v_{j+1} \leq c\l(\v_j\r)\mi \v_j = \v_{j+1}\r]\rho_{\nu} + \label{eq:prob_w_rho}\\
& \quad \quad \quad \quad \quad \quad \quad \quad \Prob\l[\v_{j+1} \leq c\l(\v_j\r)\mi \v_j \neq \v_{j+1}\r](1-\rho_{\nu})\nonumber\\
& = \rho_{\nu} +(1-\rho_{\nu})\Prob\l[\v^* \leq c(\v)\r].\nonumber
\end{align}
Noting that $\Prob\l[\v^* \leq c(\v)\r] = \Esp\l[\overrightarrow{\nu_0}(c(\v))\r]$, where $\overrightarrow{\nu_0}$ is the distribution function of $\v \sim \nu_0$, we get (a). To prove (b) first we show that
\begin{equation}\label{eq:w_dec_v1...vj}
\Prob[\w_{j}\geq \w_{j+1} \mid \w_1,\ldots,\w_{j}] = \Prob[\v_{j+1}\leq c(\v_j) \mid \v_1,\ldots,\v_{j}],
\end{equation}
almost surely. It is straight forward from the stick-breaking construction that $(\w_1,\ldots,\w_j)$ is $(\v_1,\ldots,\v_j)$-measurable. Conversely, the proof of Theorem \ref{theo:sb} yields $(\v_1,\ldots,\v_j)$ is $(\w_1,\ldots,\w_j)$-measurable as well, whenever $0 < \v_i < 1$ almost surely for every $i \geq 1$, this is of course the case of ESBs, because the underlying base measure, $\nu_0$, is diffuse. Finally as explained above $\w_j \geq \w_{j+1}$ if and only if $c(\v_j) \geq \v_{j+1}$, which proves \eqref{eq:w_dec_v1...vj}. Now, since $(\v_i)_{i \geq 1}$ is sampled from a species sampling process, we already know, from Theorem \ref{theo:Rep_SSP}, how to compute 
\[
\Prob[\v_{j+1}\leq c(\v_j) \mid \v_1,\ldots,\v_{j}] = \sum_{i=1}^{\K_j}\frac{\pi_{\nu}\l(\n^{(i)}\r)}{\pi_{\nu}(\n)}\Ind_{\{\v^*_i \leq c(\v_j)\}} + \frac{\pi_{\nu}\l(\n^{(\K_j+1)}\r)}{\pi_{\nu}(\n)}\overrightarrow{\nu_0}(c(\v_j)),
\] 
which finishes the proof.
\end{proof}

\subsection{Proof of Corollary \ref{cor:weights_ord}}

\begin{proof}
For $\v \sim \Be(1,\theta)$, its distribution function is given by, $\overrightarrow{\nu_0}(x) = 1-(1-x)^{\theta}$, hence by substituting the tie probability $\rho_{\nu} = 1/(\beta +1)$, in Theorem \ref{theo:weights_ord}, we obtain 
\begin{equation}\label{eq:w(a)}
\Prob\l[\w_j \geq \w_{j+1}\r] = 1 - \frac{\beta}{\beta +1}\Esp\l[(1-c(\v))^{\theta}\r].
\end{equation}
where $c(v) = 1 \wedge v(1-v)^{-1}$. Since  $\v \sim \Be(1,\theta)$, we get
\begin{align*}
\Esp\l[(1-c(\v))^{\theta}\r] = \theta\int_0^{1/2}\l(1-\frac{x}{1-x}\r)^{\theta}(1-x)^{\theta-1}dx = \theta\int_0^{1/2}\frac{(1-2x)^{\theta}}{(1-x)}dx,
\end{align*}
and by the change of variables $y = 2x$, 
\begin{align*}
\Esp\l[(1-c(\v))^{\theta}\r] = \frac{\theta}{2} \int_0^1 \frac{(1-y)^\theta}{(1-y/2)}dy = \frac{\2F1(1,1;\theta+2,1/2)\theta}{2(\theta+1)}.
\end{align*}
Substituting this quantity into \eqref{eq:w(a)} yields (a). For the proof of (b) we simply have to recall that if $(\v_i)_{i \geq 1}$ are exchangeable and driven by a Dirichlet process, $\bnu$, with total mass parameter $\beta$ and base measure $\nu_0 =\Be(1,\theta)$, then
\[
\Prob[\v_{j+1}\in \cdot\mid \v_1,\ldots,\v_j] = \frac{1}{\beta+j}\l\{\sum_{i = 1}^{\K_j}\n_i\delta_{\v_i^*} + \beta\nu_0\r\},
\]
where $\v^*_1,\ldots,\v^{*}_{\K_j}$ are the distinct values that $\{\v_1,\ldots,\v_j\}$ exhibits and $\n_{i} = |\{l\leq j:\v_l = \v^{*}_i\}|$, for every $i \leq \K_j$. This implies
\[
\Prob[\v_{j+1}\leq c(\v_j)\mid \v_1,\ldots,\v_j] = \frac{1}{\beta+j}\l\{\sum_{i=1}^{\K_j}\n_i\Ind_{\{\v^*_i \leq c(\v_j)\}} + \beta\overrightarrow{\nu_0}(c(\v_j))\r\},
\] 
and recalling that $\overrightarrow{\nu_0}(x) = 1-(1-x)^{\theta}$ the proof of (b) follows.
\end{proof}

\section{MCMC implementation for density estimation}\label{sec:MaP_EaP}
Say we model elements in $\{\y_1,\ldots,\y_n\}$ as i.i.d. sampled from the ESB mixture, $\bPhi = \sum_{j \geq 1}\w_jG(\cdot\mid\bxi_j)$  where $G(\cdot\mid s)$ has a density, denoted by the same letter, with respect to suitable measure, and $\bmu = \sum_{j \geq 1}\w_j \delta_{\bxi_j}$ defines an ESB with exchangeable length variables $\V = (\v_i)_{i\geq 1}$ driven by a species sampling process $\bnu$. Let us denote $\W = (\w_j)_{j \geq 1} = \SB[\V]$ and $\bXi = (\bxi_j)_{j \geq 1}$, we will also use the notation $\bbp(\z)$ and $\bbp(\z\mid \bgamma)$ to refer to the marginal density (or mass probability function) of $\z$, and the conditional density of $\z$ given $\bgamma$, respectively.

Consider the random density
\begin{equation*}\label{eq:f(y)_int}
\bPhi(\y) = \bbp(\y\mid \W,\bXi) = \sum_{j \geq 1}\w_jG(\y|\bxi_j),
\end{equation*}
for MCMC implementation purposes, and following \cite{W07}, this random density can be augmented as
\begin{equation*}\label{eq:f(y,u)}
\bbp(\y,\u|\W,\bXi) = \sum_{j \geq 1} \Ind_{\{\u < \w_j\}}G(\y|\bxi_j),
\end{equation*} 
Given $\u$, the number of components in the mixture is finite, with indexes being the elements of $A_\u(\W) = \{j: \u < \w_j\}$, that is 
\begin{equation}\label{eq:f(y|u,w,s)}
\bbp(\y|\u,\W,\bXi) = \frac{1}{|A_\u(\W)|} \sum_{j \in A_\u(\W)} G(\y|\bxi_j).
\end{equation}
This translates the problem of dealing with a mixture that has a infinite number of components, into working with one that features a random number of components. From a numerical perspective, the latter is much more manageable. Moreover, considering the latent allocation variable $\d$, i.e. $\d = j$ iff $\y$ is sampled from $G(\cdot|\bxi_j)$, one can further consider the augmented joint density,
\begin{equation}\label{eq:f(y,u,d)}
\bbp(\y,\u,\d|\W,\bXi) = \Ind_{\{\u<\w_d\}}G(\y|\bxi_d).
\end{equation}
The complete data likelihood based on a sample of size $n$ from \eqref{eq:f(y,u,d)} is easily seen to be
\[
\mathL_{\bXi,\W}((\y_k,\u_k,\d_k)_{k=1}^{n}) = \prod_{k=1}^{n} \Ind_{\{\u_k < \w_{d_k}\}}G(\y_k|\bxi_{\d_k}),
\]
and taking this into account we can compute the full conditional distributions, required to perform the Gibbs sampler, as described below.

\subsubsection*{1. Updating the slice variables, $\U = (\u_k)_{k=1}^{n}$:}

\begin{equation*}\label{eq:upd_u_k}
\bbp(\u_k|\ldots) \propto \Ind_{\{\u_k < \w_j\}},
\end{equation*}
Hence to updated $\u_k$, we simply sample it from a $\msf{Unif}\l(0,\w_k\r)$ distribution.

\subsubsection*{2. Updating the kernel parameters, $\bXi=(\bxi_j)_{j\geq 1}$:}
Under the assumption that the base measure, $\mu_0$ has a density, denoted by the same symbol, with respect to a suitable measure, we get
\begin{equation*}\label{eq:upd_theta_j}
\bbp(\bxi_j|\ldots) \propto \mu_0(\bxi_j)\prod_{k \in D_j}G(\y_k|\bxi_j),
\end{equation*}
where $D_j = \{k: \d_k = j\}$. If $\mu_0$ and $G$ form a conjugate pair, the above is easy to sample from.

\subsubsection*{3. Updating latent allocation variables, $\D = (\d_k)_{k=1}^{n}$:}

\begin{equation*}\label{eq:upd_d_k}
\bbp(\d_k = j|\ldots) \propto G(\y_k|\bxi_j)\Ind_{\{\u_k < \w_j\}}, 
\end{equation*}
which is a discrete distribution with finite support, hence easy to sample from. 

\subsubsection*{4. Updating the length variables, $\V = (\v_i)_{i \geq 1}$:}
Consistently with the notation of previous sections, let $\pi_{\nu}$ denote the EPPF corresponding to $\bnu$. Let $\V_{-j}$, denote the vector of (previously updated) $\v_i$'s excluding $\v_j$, by Remark \ref{rem:varphi} below, $\V_{-j}$ is finite  at each iteration. Hence exploiting the exchangeability of $\V$ we get
\begin{equation}\label{eq:pred_v_prior}
\bbp(\v_j\mid\V_{-j}) = \sum_{i=1}^{\k}\frac{\pi_{\nu}(\n^{(i)})}{\pi_{\nu}(\n)}\delta_{\v^*_i}(\v_j) + \frac{\pi_{\nu}(\n^{(\k+1)})}{\pi_{\nu}(\n)}\nu_0(\v_j),
\end{equation}
where, $\v^*_1,\ldots,\v^*_{\k}$, are the distinct values $\V_{-j}$ exhibits, $\n = (\n_1,\ldots\n_{\k})$,  $\n^{(i)} = (\n_1,\ldots \n_{i-1},\n_i+1,\n_{i+1}, \ldots,\n_{\k})$ and  $\n^{(\k+1)} = (\n_1,\ldots,\n_{\k},1)$,  with $\n_{l} = |\{\v_i \in  \V_{-j}:\v_i = \v^{*}_l\}|$. This yields the full conditional of $\v_j$ is
\begin{align*}
\bbp(\v_j|\ldots) & \propto \left[\prod_{k=1}^{m}\Ind_{\{\u_k<\w_{\d_k}\}}\right]\l[ \sum_{i=1}^{\k}\pi_{\nu}(\n^{(i)})\delta_{\v^*_i}(\v_j) + \pi_{\nu}(\n^{(\k+1)})\nu_0(\v_j)\r].
\end{align*}
After some algebra and recalling $\w_{\d_k} = \v_{\d_k}\prod_{i=1}^{\d_k -1}(1-\v_i)$, it can be seen that
\[
\prod_{k=1}^{m}\Ind_{\{\u_k<\w_{\d_k}\}} \propto \Ind_{\{a_j < \v_j < b_j\}}
\]
where the proportionality sign is with respect to $\v_j$ and where
\[
a_j = \max_{k \in A_j} \l\{\frac{\u_k}{\prod_{i<\d_k}(1-\v_i)}\r\}\quad
\text{ and } \quad
b_j = 1-\max_{k \in B_j}\l\{\frac{\u_k}{\v_{\d_k}\prod_{i<\d_k,i\neq j}(1-\v_i)}\r\},
\]
with $A_j = \{k:\d_k = j\}$ and $B_j = \{k:\d_k > j\}$, and using the convention that $a_j=0$ if $A_j = \emptyset$ and $b_j = 1$ when $B_j = \emptyset$. Thus,
\begin{equation}\label{eq:pred_v_post}
\bbp(\v_j|\ldots) \propto \sum_{i=1}^{\k}\pi_{\nu}(\n^{(i)})\delta_{\v^*_i}(\v_j)\Ind_{\{a_j < \v^*_i < b_j\}} + \pi_{\nu}(\n^{(\k+1)})\nu_0(\v_j)\Ind_{\{a_j < \v_j < b_j\}}.
\end{equation}
Sampling $\v_j$ from \eqref{eq:pred_v_post} simply means that with probability $\mbf{q}_i \propto \pi_{\nu}\l(\n^{(i)}\r)\Ind_{\{a_j < \v^*_i < b_j\}}$ we set $\v_j = \v^{*}_i$ for every $1 \leq i \leq \k$, or with probability 
\[
\mbf{q}_0 \propto \pi_{\nu}\l(\n^{(\k+1)}\r)\int_{a_j}^{b_j}\nu_0(x) dx
\]
we sample $\v_j$ from the density 
\[ 
\frac{\nu_0(v)\Ind_{\{a_j < v < b_j\}}}{\int_{a_j}^{b_j}\nu_0(x) dx}.
\]
For example, if the mixing prior corresponds to a DSB, $\bmu = \sum_{j\geq 1}\w_j\delta_{\bxi_j}$, with parameters $(\beta,\theta,\mu_0)$, we see that \eqref{eq:pred_v_prior} becomes
\[
\bbp(\v_j\mid\V_{-j}) = \sum_{i=1}^{\k}\frac{\n_i}{|\V_{-j}|+\beta}\delta_{\v^*_i}(\v_j) + \frac{\beta}{|\V_{-j}|+\beta}\Be(\v_j\mid 1,\theta).
\]
Hence, we obtain
\[
\mbf{q}_i = \frac{\n_i\Ind_{\{a_j < \v^*_i < b_j\}}}{\sum_{l\in C_i}\n_l + \beta\l[(1-a_j)^{\theta} - (a-b_j)^{\theta}\r]},
\]
for every $1 \leq i \leq \k$, where $C_j = \{i\leq \K:a_j < \v^{*}_i < b_j\}$,
\[
\mbf{q}_0 = \frac{\beta\l[(1-a_j)^{\theta} - (a-b_j)^{\theta}\r]}{\sum_{l\in C_i}\n_l + \beta\l[(1-a_j)^{\theta} - (a-b_j)^{\theta}\r]},
\]
and 
\[
\frac{\nu_0(v)\Ind_{\{a_j < v < b_j\}}}{\int_{a_j}^{b_j}\nu_0(x) dx} = \frac{\theta (1-v)^{\theta-1}\Ind_{\{a_j < v < b_j\}}}{\l[(1-a_j)^{\theta} - (a-b_j)^{\theta}\r]}.
\]
Sampling from this last density is easy by means of inverse sampling.

\begin{rem}[For the updating of  $\bXi$, $\V$ and $\X$]\label{rem:varphi}
It is not necessary to sample $\v_j$ and $\bxi_j$ for infinitely many indexes, it suffices to sample them for $j \leq \varphi$,  where $\sum_{j=1}^{\varphi} \w_j \geq \max_k(1-\u_k)$, then it is not possible that $\w_j > \u_k$ for any $k \leq m$ and $j > \varphi$, and the updating of the latent allocation variables $\d_k$'s can take place.
\end{rem}

\subsubsection*{5. Updating the tie probability for DSBs:}

When implementing certain ESB priors such as DSBs, it is of interest to estimate the underlying tie probability $\rho_{\nu} = \Prob[\v_j = \v_l] = 1/(\beta+1)$, to do this we can assign it a prior distribution. Roughly speaking, this allows the model to choose between DSB mixtures that behave arbitrarily similar to a Dirichlet mixture, to a Geometric mixture or some other  mixture in between. It is straightforward to check that the full conditionals of $\bxi_j$, $\d_i$, $\u_i$ and $\v_j$ will remain as described above conditionally given $\beta = (1-\rho_{\nu})/\rho_{\nu}$. In this case we will require to update $\rho_{\nu}$ at each iteration of the Gibbs sampler. Since $\rho_{\nu}$ only affects directly the length variables, it is easy to see that the full conditional distribution of this random variable is
\[
\bbp(\rho_{\nu}\mid\ldots) \propto \bbp(\v_1,\v_2,\ldots\mid\rho_{\nu})\bbp(\rho_{\nu}).
\]
Given that at each iteration of the Gibbs sampler we only sample finitely many length variables, say $\v_1,\ldots,\v_m$, we obtain
\[
\bbp(\rho_{\nu}\mid\ldots) \propto \pi_{\nu}(\n_1,\ldots\n_{\K_m})\bbp(\rho_{\nu})
\]
(see Theorem \ref{theo:Rep_SSP}) where $\K_m$ is the number of distinct values $\{\v_1,\ldots,\v_m\}$ exhibits, $\n_1,\ldots,\n_{\K_m}$ are the frequencies of the distinct values and $\pi_{\nu}$ is the EPPF of the Dirichlet process. That is,
\[
\bbp(\rho_{\nu}\mid\ldots) \propto\frac{(1-\rho_{\nu})^{\K_m-1}\,\rho_{\nu}^{m-\K_m}}{\prod_{l=0}^{m-2}(1+l\rho_{\nu})}\bbp(\rho_{\nu}).
\]
Drawing samples from the full conditional of $\rho_{\nu}$ is possible with the aid of a rejection sampling method such as Adaptive Rejection Metropolis Sampling (ARMS) \citep[e.g.][]{GBT95}.

\subsection*{Posterior estimation}

Given the samples, $\l\{\l(\bxi^{(m)}_j\r)_j,\l(\w^{(m)}_j\r)_j,\l(\u^{(m)}_k\r)_k,\l(\d^{(m)}_k\r)_k\r\}_{m = 1}^{M}$, obtained after $M$ iterations of the Gibbs sampler after the burn-in period has elapsed, we can estimate the density of the data at $y$, by means of the expected a posteriori (EAP),
\begin{equation}\label{eq:hat_f}
\bPhi_{\mrm{EAP}}(y) = \Esp\l[\bPhi(y)\mi\y_1,\ldots\y_n\r] \approx \frac{1}{M}\sum_{m=1}^{M} \frac{1}{n}\sum_{k=1}^{n} \frac{1}{\big|A_k^{(m)}\big|} \sum_{j \in A_k^{(m)}} G\l(y\mi \bxi^{(m)}_j\r),
\end{equation}
where $A_k^{(m)} = \l\{j: \u_k^{(m)} < \w_j^{(m)}\r\}$. We can estimate as well the posterior distribution of $\K_n$ through
\begin{equation}\label{eq:post_Kn}
\Prob\l[\K_n = j\big|\y_1,\ldots,\y_n\r] \approx \frac{1}{M} \sum_{m=1}^{M} \Ind_{\l\{\K_n^{(m)} = j\r\}},
\end{equation}
where $\K_n^{(m)}$ is the number of distinct values in $\l(\d^{(m)}_k\r)_k$. Alternatively, we can find
\begin{align*}
\hat{m} & = \underset{1 < m \leq M}{\mrm{arg}\,\mrm{max}}\,\l\{\bbp\l[\l(\bxi^{(m)}_j\r)_j,\l(\w^{(m)}_j\r)_j,\l(\u^{(m)}_k\r)_k,\l(\d^{(m)}_k\r)_k\mi \y_1,\ldots,\y_n\r]\r\}\\
& = \underset{1 < m \leq M}{\mrm{arg}\,\mrm{max}}\,\l\{\bbp\l[\l(\y_k,\d_k^{(m)},\u_k^{(m)}\r)_k\mi \l(\bxi_j^{(m)},\w_j^{(m)}\r)_j\r]\bbp\l[\l(\bxi_j^{(m)},\w_j^{(m)}\r)_j\r]\r\}\\
& = \underset{1 < m \leq M}{\mrm{arg}\,\mrm{max}}\,\l\{\prod_{k=1}^n G\l(\y_k \mi \bxi^{(m)}_{\d_k^{(m)}}\r) \Ind\l\{\u_k^{(m)} <\w^{(m)}_{\d_k^{(m)}}\r\}\bbp\l[\l(\bxi_j^{(m)},\w_j^{(m)}\r)_j\r]\r\}.
\end{align*}
and use it to approximate the maximum a posterior (MAP) of the density at $y$, through
\[
\bPhi_{\mrm{MAP}}(y) \approx  \frac{1}{n}\sum_{k=1}^{n} \frac{1}{\big|A_k^{(\hat{m})}\big|} \sum_{j \in A_k^{(\hat{m})}} G\l(y\big|\bxi^{(\hat{m})}_j\r),.
\]
By means of the MAP we can also estimate  the clusters of the data points, $\{\y_1,\ldots,\y_n\}$, via the mixture components
\[
\l\{\frac{1}{n}\sum_{k=1}^n\l|A_k^{(\hat{m})}\r|^{-1}\Ind_{\l\{j \in A_k^{(\hat{m})}\r\}}G\l(\cdot\mi\bxi^{(\hat{m})}_j\r)\r\}_{j},
\]
by defining
\[
\mbf{c}_k = \underset{j}{\mrm{arg}\,\mrm{max}}\,\l\{\frac{1}{n}\sum_{k=1}^n\l|A_k^{(\hat{m})}\r|^{-1}\Ind_{\l\{j \in A_k^{(\hat{m})}\r\}}G\l(\y_k\mi\bxi^{(\hat{m})}_j\r)\r\},
\]
for every $k \in \{1,\ldots,n\}$, and putting $\y_i$ and $\y_k$ in the same cluster if and only if $\mbf{c}_i = \mbf{c}_k$.

\section{Supplemental graphs of Section \ref{sec:ill_rand}}\label{sec:paw3d}

\begin{figure}[H]
\centering
\includegraphics[scale=0.76]{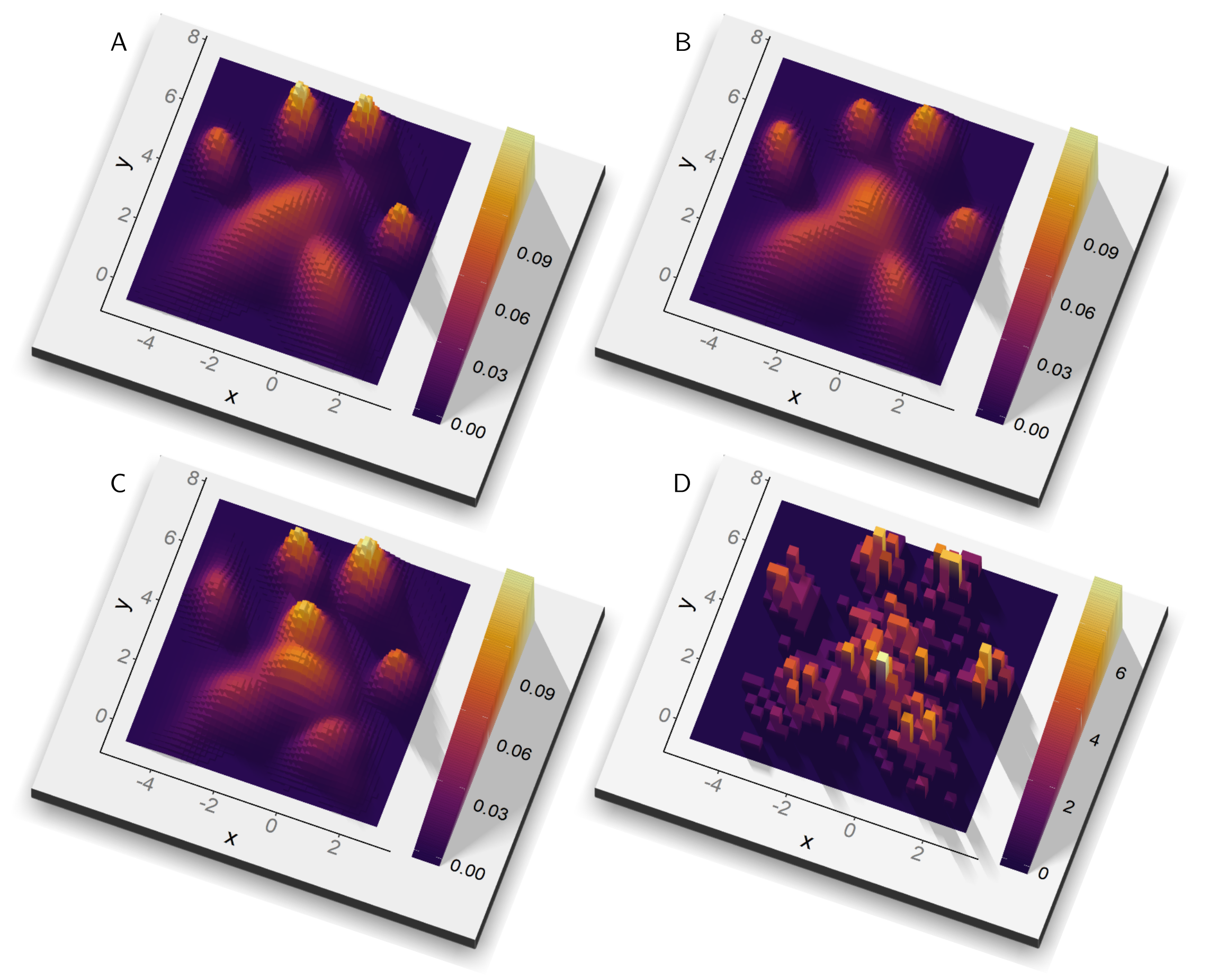} 
\caption{$3$D view of the estimated densities using the MAP, taking into account $8000$ iterations of the Gibbs sampler after a burn-in period of $2000$ iterations, according to the Dirichlet prior $(\msf{A})$ a DSB prior $(\msf{B})$  and a Geometric prior $(\msf{C})$. $\msf{D}$ shows the histogram of the data. \label{FigPaw3-3d-MaP}}
\end{figure}

\begin{figure}[H]
\centering
\includegraphics[scale=0.76]{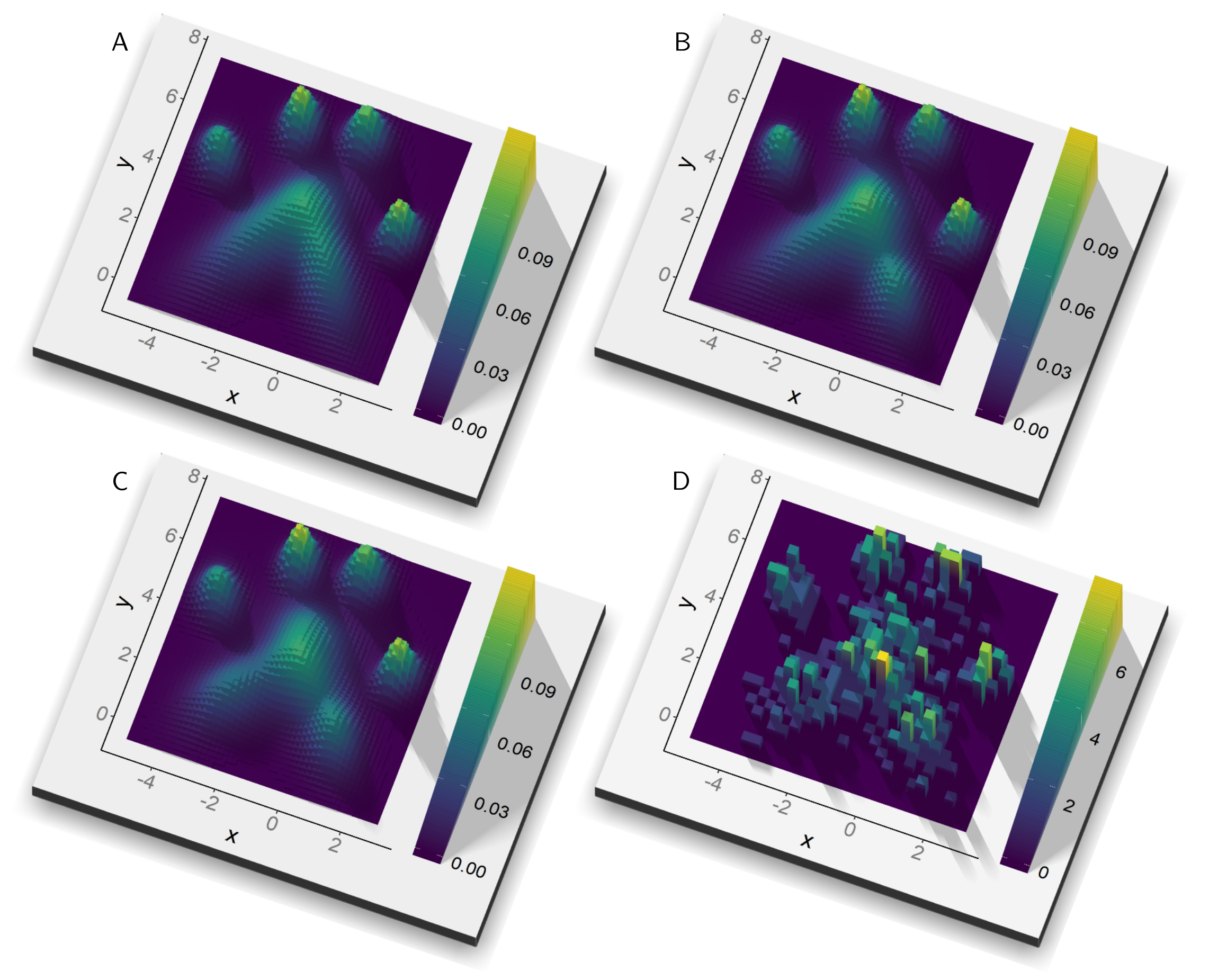} 
\caption{$3$D view of the estimated densities using the EAP, taking into account each fourth iteration among $8000$ iterations of the Gibbs sampler after a burn-in period of $2000$, according to the Dirichlet prior $(\msf{A})$ a DSB prior $(\msf{B})$ and a Geometric prior $(\msf{C})$. $\msf{D}$ shows the true density from which the data points were i.i.d. sampled. $\msf{D}$ shows the histogram of the data \label{FigPaw3-3d-EaP}}
\end{figure}

\bibliographystyle{dcu}
\bibliography{references}

\end{document}